\setlist[enumerate,1]{font=\upshape, itemsep=1ex}\setlist[itemize,1]{font=\upshape, itemsep=1ex}
\def\Z{{\mathbb Z}}
\def\Q{{\mathbb Q}}
\def\C{{\mathbb C}}
\def\P{{\mathbb P}}
\def\CP{\C P}
\def\co{\colon\thinspace}
\def\calu{\mathcal{U}}
\def\cs{\mathbin{\#}}
\DeclareMathOperator{\cfk}{\rm CFK}
\DeclareMathOperator{\cf}{\rm CF}
\newcommand{\spinc}{\ifmmode{{\mathfrak s}}\else{${\mathfrak s}$\ }\fi}
\newcommand{\spinct}{\ifmmode{{\mathfrak t}}\else{${\mathfrak t}$\ }\fi}
\newcommand{\spincr}{\ifmmode{{\mathfrak r}}\else{${\mathfrak r}$\ }\fi}
\newcommand{\Spc}{Spin$^c$}
\newcommand{\fig}[2] { \includegraphics[scale=#1]{#2} }
\def\U{\Upsilon}
\def\Spinc{Spin$^c$}
\def\ns{\overline{\sigma}_{p,q}(x)}
\newtheorem{theorem}{Theorem}[section]
\newtheorem{lemma}[theorem]{Lemma}
\newtheorem{corollary}[theorem]{Corollary}
\newtheorem{proposition}[theorem]{Proposition}
\theoremstyle{definition}
\newtheorem{definition}[theorem]{Definition}
\newtheorem{example}[theorem]{Example}
\begin{document}
\title{Unknotting with a single twist}
\author{Samantha Allen}
\author{Charles Livingston}
\thanks{This work was supported by a grant from the National Science Foundation, NSF-DMS-1505586.   }
\address{Charles Livingston: Department of Mathematics, Indiana University, Bloomington, IN 47405}\email{livingst@indiana.edu}
\address{Samantha Allen: Department of Mathematics, Dartmouth College, NH 03755}\email{Samantha.G.Allen@dartmouth.edu}
%\date{\today}

%%%%%%%ABSTRACT%%%%%%%%%%%%%%

\begin{abstract}  Given a knot $K \subset S^3$, is it possible to unknot it by performing a single twist, and if so, what are the possible linking numbers of such a twist?   We develop obstructions to unknotting using a twist of a specified linking number.  The obstructions we describe are built using classical knot invariants, Casson-Gordon invariants, and Heegaard Floer theory.
\end{abstract}

\maketitle

%%%%%%%Section%%%%%%%%%%%%%%

\section{Introduction}

Figure~\ref{fig:t23} presents three illustrations of the right handed trefoil knot, $T_{2,3}$.  In each, performing a full twist on the parallel strands that pass through the small circle results in an unknot.  In the first two cases the required twist is negative, and in the last it is positive.  The linking numbers of the twists, which by convention are always positive,  are 2, 3, and 0, respectively.  Thus, we say the set of unknotting twist indices,  denoted $\calu$, satisfies  $ \{ 2^-, 3^-, 0^+\} \subset \calu(T_{2,3} )$. The reader is invited to show that for the figure eight knot, $4_1$, $ \{ 2^-,   0^-, 0^+, 2^+\} \subset \calu(4_1) $.    The results of this paper will imply that these two containments are, in fact, equalities.

\begin{figure}[h]
\fig{.6 }{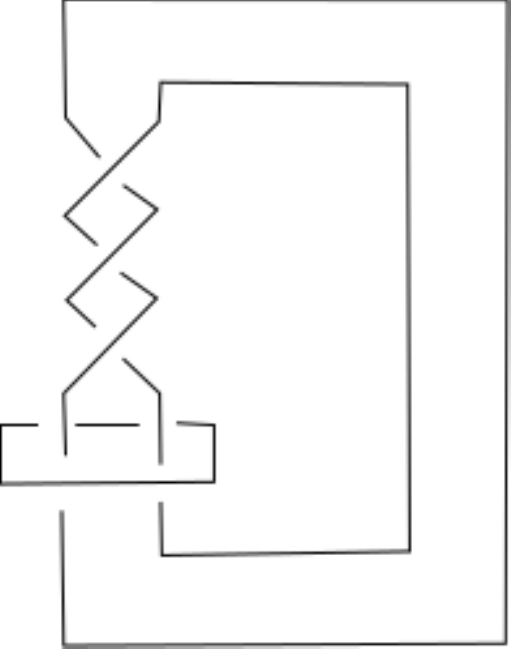} \hskip.7in
\fig{.45}{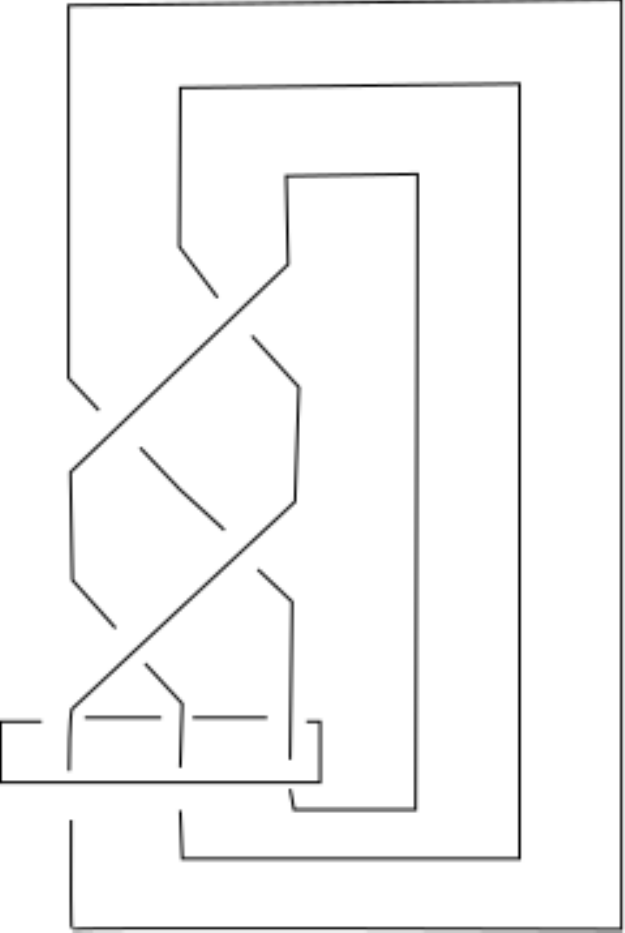} \hskip.7in
\fig{.6}{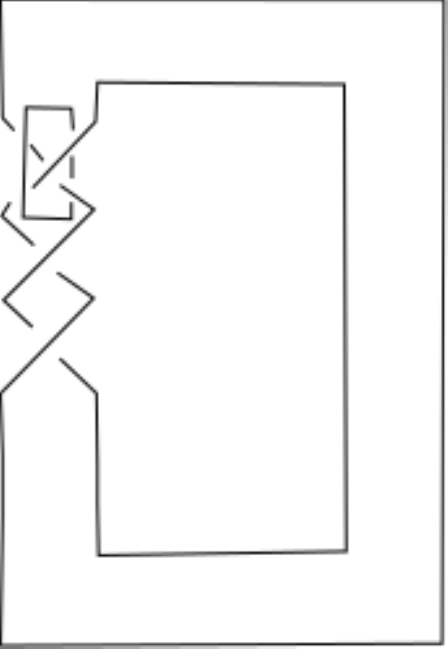} \caption{Unknotting the trefoil}
\label{fig:t23}
\end{figure}

Our  goal is to consider the question of which knots can be unknotted with a single   twist  and, more generally, to describe  tools for analyzing $\calu(K)$.  This problem has been extensively studied, often in the more general setting in which the one operation consists of introducing  perhaps more than one   full twist on the parallel strands.    A sampling of references includes~\cite{MR3619734,MR1355072,MR1194998,MR1355073, MR1282760,MR1177414,MR2233011,MR1871306, MR2016983}.  Of particular note is work of Ince~\cite{MR3513844, MR3685608} which applies Heegaard Floer theory in the case of linking number 0 and that of Sato~\cite{MR3823992}, which considers a related slicing problem in $\C\P^2$.

From the perspective of classical knot theory, a theorem of Ohyama~\cite{MR1297516} heightens the interest in unknotting with one twist:  every knot can be unknotted with   two full twists. In~\cite{livingston2019nullhomologous} it was observed that the linking numbers of the two twists can be any consecutive pair of integers, and that if one requires that the linking numbers be 0, then up to $2g$ twists might be required, where $g$ is the three-genus of the knot.

Some of our results concerning  $\calu(K)$ overlap with previous ones, albeit with alternative and at times simpler proofs.      Other results, especially those based on Heegaard Floer theory, are new, as is our examination of new ways to use the various approaches in conjunction.

The basis of  much of our work is the observation that if $K$ can be unknotted with a single twist, then three-manifolds built by surgery on $K$ bound four-manifolds with special properties.  This in turn lets us apply a range of tools to the problem, including those that arise in classical knot theory, Casson-Gordon theory, and Heegaard Floer theory.  Of special relevance  is the work of Aceto and Golla~\cite{MR3604383}  applying Heegaard Floer theory to the question of which surgeries on a given knot $K$ bound rational homology four-balls.

We now summarize a few of the main results presented in the paper.

\smallskip
\noindent{\bf Outline}
\begin{itemize}

\item Section~\ref{sec:surgery} presents the basic geometric observations that form the basis of our later work.  This includes the observation that if $K$ can be unknotted with a single twist of sign $s$ and linking number $l$, then $S^3_{-sl^2 - s}(K) \cong S^3_{sl^2 +s}(J)$ for some knot $J$.  We also note the well-known fact that $S^3_{sl^2}(K)$ bounds a four-manifold $W$ with $H_1(W) \cong \Z_l$ and, if $l \ne 0$, $H_2(W) = 0$; in addition, we note that in fact $H_1(W) \cong \pi_1(W)$ and that the map $\pi_1(S^3_{sl^2}(K)) \to H_1(W)$ is surjective.

\item  Sections~\ref{sec:homology},~\ref{sec:link},~\ref{sec:cg}, and~\ref{sec:arf}  present results related to  homological invariants associated to branched cyclic covers and the infinite cyclic cover.  In   Section~\ref{sec:homology} the focus is on the ranks of the homology groups.   As we describe, our results that arise from  $\Z[t, t^{-1}]$--coefficients instead of $\Q[t, t^{-1}]$--coefficients  depend on the use of Gr\"obner bases.     In cases in which the rank of the homology is not sufficient to provide necessary obstructions,   we observe in Section~\ref{sec:link} that the $\Q/\Z$--valued linking form can provide stronger obstructions.

Section~\ref{sec:cg} explores the use of the knot signature function, using an approach developed by  Casson and Gordon; our results include a short proof of a theorem of Nouh-Yasuhara~\cite{MR1871306} which they stated for torus knots.   In combination, these results place strong limits of the possibility of the set $\calu(K)$ containing both positive and negative entries; for instance, if $\{4^{-} , 5^+\} \subset \calu(K)$, then  the genus of $K$ is at least $23$; another new application of the signature result is that, with a few exceptions, if $\gcd(l_1, l_2) \ne 1$, then $\{l_1, l_2\} \not\subset \calu(K)$ for any knot $K$ (with any choice of signs).

Finally, Section~\ref{sec:arf} complements the signature results of Section~\ref{sec:cg}  with a discussion of the Arf invariant.  Applications to torus knots are described.  In addition, among classical knot invariants, the Arf invariant is to our knowledge the strongest one that can address the possibility of $1^\pm \in \calu(K)$.  Later we will see that Heegaard Floer obstructions offer alternative obstructions, but even for low crossing alternating knots of fewer than 13 crossing, the Arf invariant provides an obstruction in over 600 cases in which the Heegaard Floer obstructions vanish.

\item Section~\ref{sec:hf} provides  background on Heegaard Floer theory, summarizing the essential properties of the knot invariants $\nu^+(K)$ and $V_i(K)$.  We also describe the needed properties of the three-manifold correction terms, $d(Y, \spinc)$, restricting to the case of $Y = S^3_m(K)$.

\item Section~\ref{sec:hf2} presents an obstruction to unknotting based on the invariants $V_i(K)$.  The obstruction itself was first presented by Sato~\cite{MR3823992} and our  result also follows from work of Aceto-Golla~\cite{MR3604383}.  We include our own proof; in our setting we are able to give very short, and hopefully accessible, arguments.  In addition, we present new applications of these obstructions.  Section~\ref{sec:hf3} presents much stronger constraints on the invariants $V_i(K)$.  These are specific to the unknotting problem and do not apply in the more general settings of~\cite{MR3604383,MR3823992}.

Section~\ref{sec:hf6} presents obstructions based on the Heegaard Floer invariants associated to cyclic covers of a knot $K$, more specifically, $d(M_2(K), \spinc)$.  In Section~\ref{sec:hf5} we present obstructions based on the Upsilon invariant of Ozs\'ath-Stipsicz-Szab\'o~\cite{MR3667589}. As we will make clear, the Upsilon invariant is theoretically  no stronger than the $V_i$--invariants, but it has  the advantage of being much more computable; this is illustrated with an example of  connected sums  of torus knots.

\item Section~\ref{sec:alt} discusses the case of alternating knots, in which computations are most accessible.

\item Section~\ref{sec:questions} presents some comments and open problems concerning sets $\calu(K)$.

\item In  the appendix we present a few technical results and present a summary of an analysis of prime knots of eight or fewer crossings.

\end{itemize}
\smallskip

\smallskip

\noindent{\it Acknowledgement}\ \  Mohamed Ait Nouh provided us with many pointers regarding the general problem of untwisting;  this paper  significantly benefitted  from his feedback.  Marco Golla and Paolo Aceto also gave us insightful feedback which was of great help in improving the exposition.

\section{Geometric results related to  unknotting twists}\label{sec:surgery}

Throughout this paper we will use surgery descriptions of  three-manifolds,   knots, and their branched covering spaces.  A basic reference  is the text by Rolfsen~\cite[Chapter 9H]{MR0515288}.   More details can be found in~\cite{MR1707327} and original sources such as~\cite{MR0467753}.

\subsection{Three-dimensional aspects of surgery diagrams and unknotting}

Let $K \subset S^3$ be knot.  We denote by $S^3_n(K)$ the three-manifold formed by $n$--surgery on $K$.  If $(K, J)$ is link, we write $S^3_{n,m}(K,J)$ for the three-manifold formed by performing $n$-- and $m$--surgery on $K$ and $J$, respectively.

In the case of the unknot $U$ and    $s = \pm 1$ we have  $S^3_s(U) \cong S^3$.  It follows that $S^3_{n,s}(K,U) \cong  S^3_{n'}(K')$ for some $n'$ and $K'$.    As described in~\cite[Chapter 9G]{MR0515288}, $K'$ is the  knot formed by performing a full twist to the strands of $K$ passing through $U$, twisting left or right depending on whether $s = 1$ or $s = -1$, respectively.  The new surgery coefficient is  $n' = n-sl^2$, where $l = \text{link}(K,U)$.  (In general, the linking number is defined for {\it oriented} links.  Here, we chose orientations so that the linking number is nonnegative.)    In summary, we have the next result.

\begin{theorem}  Suppose that $K$ can be unknotted with a single twist of sign $s = \pm 1$ and linking number $l$.  Then for all $n$, 
\[S^3_n(K) \cong S^3_{n + sl^2, s}(U_1, U_2),\]
where $U_1$ and $U_2$ are both unknotted.
\end{theorem}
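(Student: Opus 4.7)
The plan is to apply the surgery formula $(K,n;\,U,s)\cong(K',n-sl^2)$ from the paragraph immediately preceding the theorem, now read in the reverse direction so as to introduce rather than eliminate the twisting circle.

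First I would extract the link $(U_1,U_2)$ from the hypothesis. The hypothesis furnishes an unknotted circle $U\subset S^3\setminus K$ with $\mathrm{lk}(K,U)=l$ such that the twist of sign $s$ through $U$ unknots $K$; let $K'$ denote the resulting unknot. Since the twist preserves $U$ and the linking number with $K$, the pair $(K',U)$ is a two-component link of two unknots in $S^3$ with linking number $l$. I would set $U_1=K'$ and $U_2=U$.

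The core step is a one-line computation using the preceding surgery formula, applied now to the two-component framed link $(U_1,\,n+sl^2;\,U_2,s)$. Performing $s$-surgery on $U_2=U$ simultaneously shifts the framing on $U_1=K'$ down by $sl^2$ (yielding framing $n$), and transforms $U_1$ by the inverse of the original unknotting twist, sending $K'$ back to $K$. The collapsed one-component diagram is $(K,n)$, which represents $S^3_n(K)$. This establishes $S^3_n(K)\cong S^3_{n+sl^2,\,s}(U_1,U_2)$, as required.

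The only nontrivial point, and really the main bookkeeping, is verifying that the $s$-surgery on $U_2$ in the conclusion implements the \emph{inverse} of the original unknotting twist on $K$, rather than producing an additional copy of the same twist. This amounts to a sign-convention check between the ``twist of sign $s$'' in the hypothesis and the orientation of $\pm 1$-surgery on an unknot; once one adopts the convention fixed in the preceding paragraph, the identification is forced and no further computation is needed.
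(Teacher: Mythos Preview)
Your proposal is correct and is exactly the argument the paper intends: the theorem is stated there without proof, as a direct summary of the blow-down formula in the preceding paragraph, and your write-up is the natural unpacking of that summary. Your sign check is also on target---under the paper's convention (where $s$-surgery on $U$ induces a left twist for $s=1$ and a right twist for $s=-1$), the $s$-surgery on $U_2$ does implement the twist of sign $-s$, which inverts the original unknotting twist and carries $K'$ back to $K$.
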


\begin{corollary}In the setting of the theorem:
\begin{enumerate}

\item  $S^3_{-sl^2}(K) \cong S^3_{0, s}(U_1, U_2)$, where $\text{link}(U_1, U_2) = l$.

\item $S^3_{-sl^2 -s}(K) \cong S^3_{-s, s}(U_1, U_2) \cong S^3_{sl^2 + s}(J)$, for some knot $J$.

\end{enumerate}
\end{corollary}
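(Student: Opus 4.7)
The plan is to derive both parts of the corollary directly from the theorem by specializing the surgery coefficient $n$, and, for the second isomorphism in part (2), performing a single blow-down in Kirby calculus.

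For part (1), I would set $n = -sl^2$ in the identification $S^3_n(K) \cong S^3_{n+sl^2,s}(U_1,U_2)$ supplied by the theorem, which immediately yields $S^3_{-sl^2}(K) \cong S^3_{0,s}(U_1,U_2)$. The assertion $\text{link}(U_1,U_2) = l$ is built into the geometric setup preceding the theorem: in the two-component surgery picture, $U_1$ plays the role of $K$ and $U_2$ is the twist circle, which by hypothesis passes $l$ times through $U_1$.

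For part (2), I would first specialize $n = -sl^2 - s$ in the theorem to get the first isomorphism $S^3_{-sl^2-s}(K) \cong S^3_{-s,s}(U_1,U_2)$. For the second isomorphism, I would blow down the unknotted component $U_1$, which carries framing $-s = \mp 1$. Using the standard blow-down formula for a $\pm 1$-framed unknot (as in Rolfsen~\cite[Chapter 9H]{MR0515288}): if $U_1$ has framing $\epsilon = \pm 1$, then any other component with framing $c$ and linking number $m$ with $U_1$ acquires the new framing $c - \epsilon m^2$ together with one full twist through the vanishing circle. Applying this with $\epsilon = -s$, $c = s$, and $m = \text{link}(U_1,U_2) = l$, the new framing on $U_2$ becomes $s - (-s)l^2 = sl^2 + s$. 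Since $U_2$ began as an unknot, twisting through $U_1$ produces some knot $J$, giving $S^3_{-s,s}(U_1,U_2) \cong S^3_{sl^2+s}(J)$.

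The only place that requires care is the bookkeeping of signs in the blow-down formula, particularly the orientation of the induced twist; beyond that convention-tracking there is no substantive obstacle, as the corollary reduces to an arithmetic specialization of the theorem followed by one elementary Kirby move.
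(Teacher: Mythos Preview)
Your proposal is correct and matches the argument the paper leaves implicit: the corollary is stated without proof, as it follows exactly by specializing $n$ in the theorem and, for the last isomorphism in part~(2), blowing down the $(-s)$--framed unknot $U_1$. One small remark: your justification that $\text{link}(U_1,U_2)=l$ is slightly loose---$U_1$ is not $K$ but the twisted image of $K$---though the conclusion is right since a Rolfsen twist about $U_2$ is a self-homeomorphism of $S^3\setminus U_2$ and hence preserves linking number with $U_2$.
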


\subsection{Four-dimensional aspects of surgery diagrams and unknotting.}  Recall that 
\[S^3_0(U) \cong S^1 \times S^2 \cong \partial (S^1 \times B^3).\]  
Thus, $S^3_{0, s}(U_1, U_2) \cong \partial W$, where $W$ is built from $ S^1 \times B^3$ by adding a single two-handle.  The next theorem then follows readily; we write $\Z_l $ for $\Z / l\Z$, so in the special case $l = 0$ we have $\Z_l \cong \Z$.

\begin{theorem} Suppose that $K$ can be unknotted with a single twist of sign $s$ and linking number $l$.  Then 
\begin{enumerate}

\item  $S^3_{-sl^2}(K) =  \partial W$ where $W $ is built from $ S^1 \times B^3$ by adding a single two-handle, added with framing $s$ in the standard surgery diagram for $S^1 \times B^3$.

\item  The attaching curve for the two-handle represents $l \in H_1(S^1 \times B^3)$, so $\pi_1(W) \cong H_1(W) \cong \Z_l$.

\item  The map induced by inclusion, $H_1(S^3_{-sl^2}(K)) \to H_1(W)$ corresponds to  the surjection $\Z_{l^2} \to \Z_l$.

\end{enumerate}
\end{theorem}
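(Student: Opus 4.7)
The plan is to read the three conclusions off the handle decomposition of $W$ furnished by the preceding corollary, using a direct van Kampen/cellular computation for the homology groups and a Rolfsen-twist bookkeeping step to identify the map in (3).

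Parts (1) and (2) are routine. For (1), the corollary gives $S^3_{-sl^2}(K)\cong S^3_{0,s}(U_1,U_2)$ with $U_1,U_2$ unknotted of linking $l$, and $S^3_0(U_1)\cong S^1\times S^2=\partial(S^1\times B^3)$; the remaining $s$--framed surgery on $U_2\subset S^1\times S^2$ is exactly the attaching data for a single $2$--handle on $S^1\times B^3$, producing $W$ with $\partial W\cong S^3_{-sl^2}(K)$. For (2), $\pi_1(S^1\times B^3)\cong\Z$ is generated by the core circle $\alpha$, and the class of $U_2$ in $\pi_1(S^1\times S^2)\cong\Z$ equals its $S^3$--linking with $U_1$, which is $l$ by hypothesis; attaching the $2$--handle imposes the single relation $\alpha^l=1$, so $\pi_1(W)\cong\Z_l$, and abelianness gives $H_1(W)\cong\Z_l$.

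For (3) I would split the claim into surjectivity and identification on generators. Surjectivity follows from the long exact sequence of $(W,\partial W)$ combined with Poincar\'e--Lefschetz duality: $H_1(W,\partial W)\cong H^3(W)=0$ because $W$ has a handle decomposition with no handles of index exceeding $2$. To pin the map down on the preferred generator, I would trace the meridian $\mu_K$ --- which generates $H_1(\partial W)\cong\Z_{l^2}$ --- through the Rolfsen twist identifying $S^3_{-sl^2}(K)$ with $S^3_{0,s}(U_1,U_2)$. That twist is performed by the $s$--surgery on $U_2$ and converts $U_1$ into $K$ (the framing shifting from $0$ to $-sl^2$); since it is a diffeomorphism that can be taken to be the identity away from a neighborhood of $U_2$, it carries $\mu_{U_1}$ to $\mu_K$. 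But $\mu_{U_1}$ is exactly the generator $\alpha$ of $H_1(S^1\times B^3)\cong\Z$, which projects to the generator of $H_1(W)\cong\Z_l$. A generator-to-generator homomorphism $\Z_{l^2}\to\Z_l$ is forced to be the quotient $1\mapsto 1$, yielding the stated surjection.

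The main subtlety is the identification $\mu_K\leftrightarrow\mu_{U_1}$ in the last step. The symmetric-looking alternative $\mu_K\leftrightarrow\mu_{U_2}$ would give the wrong answer, since $\mu_{U_2}$ bounds the co-core of the $2$--handle and therefore dies in $H_1(W)$. Getting this right requires being deliberate about which component of $(U_1,U_2)$ is the twist circle in the Rolfsen move producing $(K;-sl^2)$: it is $U_2$ that is surgered with framing $s=\pm 1$ (collapsing that solid torus), leaving $U_1$ as the component transformed into $K$; this is also consistent with the $4$--dimensional picture, in which the $2$--handle is attached along $U_2$.
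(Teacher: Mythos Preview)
Your argument is correct and follows the same route the paper implicitly takes: the paper states that the theorem ``follows readily'' from the handle description $S^3_{-sl^2}(K)\cong S^3_{0,s}(U_1,U_2)=\partial W$ given just before, and you have simply supplied the details the authors omit. In particular, your careful tracking of $\mu_K\leftrightarrow\mu_{U_1}$ through the Rolfsen twist and your observation that $\mu_{U_2}$ dies on the cocore are exactly the checks needed to justify part~(3), which the paper leaves to the reader.
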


%%%%%%%Section%%%%%%%%%%%%%%
%%%%%%%Section%%%%%%%%%%%%%%

\section{Single twist unknotting: homological constraints}\label{sec:homology}

The simplest obstructions to unknotting with a single twist arise from homological properties of the  cyclic branched covers and the infinite cyclic cover of the knot.  To describe these, we let   $M_q(K)$ denote the $q$--fold cyclic   branched cover of $S^3$ with branching set  $K$  and let $M_\infty(K)$ denote the infinite cyclic cover.  We begin with a definition.

\begin{definition}  A triple $(U_1, U_2, s)$ where $(U_1 , U_2)$ is a link with unknotted components and $s= \pm 1$ is called a {\it surgery diagram} for a knot $K$ if $U_1$ represents the knot $K$ in   $S^3_{s}(U_2) \cong S^3$.

\end{definition}

\begin{theorem} \label{thm:branched}  If $K$ can be unknotted with a twist of linking number $l$ and   $l$ is divisible by $q$, then
$H_1(M_q(K),\Z)$ is generated by  $q$ elements.   If $l=0$, then the  homology group $H_1(M_\infty(K),\Z)$ is generated by a single element as a $\Z[t,t^{-1}]$--module.

\end{theorem}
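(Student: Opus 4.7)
The plan is to use the surgery diagram $(U_1, U_2, s)$ guaranteed by the unknotting hypothesis, in which $U_1 \cup U_2$ is a two-component link of unknots with $\text{lk}(U_1, U_2) = l$ and $U_1$ represents $K$ inside $S^3 \cong S^3_s(U_2)$. Both claims will follow by building the relevant covers directly from this diagram and reading off a presentation of the first homology from the resulting surgery description.

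For the branched case with $q \mid l$, I would first pass to the $q$-fold cyclic cover of the ambient $S^3$ branched along the unknot $U_1$, which is again $S^3$. A loop in $S^3 \setminus U_1$ representing the class $l \in H_1(S^3 \setminus U_1) = \Z$ has $\gcd(l, q)$ preimage components in the $q$-fold cyclic cover, so under the hypothesis $q \mid l$ the preimage $\tilde U_2$ of $U_2$ is a link with exactly $q$ components in $S^3$. The branched cover $M_q(K)$ is then recovered by performing Dehn surgery on $\tilde U_2$ along framings lifted from $s$. Since the complement of any $q$-component link in $S^3$ has first homology $\Z^q$ generated by the meridians, any Dehn filling of it has $H_1$ generated by at most $q$ elements, which gives the desired bound.

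For the infinite cyclic case I would bypass the branched-cover formalism and work directly with the knot complement $X = S^3 \setminus K = E \cup T$, where $E = S^3 \setminus (U_1 \cup U_2)$ and $T$ is the solid torus introduced by the $s$-surgery on $U_2$. The abelianization $H_1(X) \to \Z$ pulls back to the map $H_1(E) = \Z \mu_1 \oplus \Z \mu_2 \to \Z$ sending $\mu_1 \mapsto 1$ and $\mu_2 \mapsto -sl$; when $l = 0$ this is projection onto the first coordinate. The intermediate cover $\tilde E$ of $E$ corresponding to the kernel of this map coincides with the infinite cyclic cover of $S^3 \setminus U_1$---contractible, since $U_1$ is unknotted---with the $\Z$-indexed family of lifts $\{U_2^{(n)}\}_{n \in \Z}$ of $U_2$ removed. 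A Mayer--Vietoris computation in this contractible ambient space shows that $H_1(\tilde E; \Z)$ is free abelian on the meridians of the $U_2^{(n)}$, so as a module over the deck transformation ring it is isomorphic to $\Z[t, t^{-1}]$ itself. Forming $M_\infty(K) = \tilde X$ by gluing the lifted solid tori onto $\tilde E$ only imposes relations, so $H_1(M_\infty(K); \Z)$ is a cyclic $\Z[t, t^{-1}]$-module.

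The main care required is the covering-space bookkeeping: identifying the correct intermediate cover of the two-component link complement in the $l = 0$ case, and counting the preimage components of $U_2$ in both cases. Once these are in place, the bounds on the number of generators follow, and the argument is insensitive to the precise framings of the lifts, since only the rank of the quotient (not the specific relations imposed) enters the conclusion.
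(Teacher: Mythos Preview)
Your proposal is correct and follows essentially the same route as the paper: lift the surgery curve $U_2$ to the relevant cyclic cover of the unknot $U_1$, count components, and read off a generating set for $H_1$ from the resulting surgery description. The only difference is one of detail: in the $l=0$ case the paper simply asserts that the infinite cyclic cover is surgery on the $\Z$--orbit of a single lift of $U_2$ (citing Rolfsen's pictures), whereas you explicitly identify the intermediate cover $\tilde E$ as a contractible space minus the translates $\{U_2^{(n)}\}$, compute $H_1(\tilde E)\cong \Z[t,t^{-1}]$ via Mayer--Vietoris, and then observe that the Dehn fillings only impose relations. This extra bookkeeping (in particular your computation of the map $H_1(E)\to\Z$ sending $\mu_2\mapsto -sl$) makes the argument more self-contained but does not change its substance.
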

\begin{proof}
Details of the construction of branched covers of knots from surgery diagrams are presented in~\cite[Chapter 6C] {MR0515288}.  
Starting with the surgery diagram for $K$, $(U_1, U_2, s)$,   one can construct a surgery diagram of $M_q(K)$; the surgery link consists of the components of the preimage of  $U_2$ in the $q$--fold branched cover of $S^3$ over $U_1$, $M_q(U_1) \cong S^3$.   There are $q$ components.  In particular, the homology of $M_q(K)$ has a presentation with $q$ generators.

The case of $l=0$ is  similar; in brief, the infinite cyclic cover is given by surgery on the set of translates of a single curve in the infinite cyclic cover of the unknot.  Rolfsen's illustration of infinite cyclic covers~\cite[Chapter 7C] {MR0515288} makes the result transparent.
\end{proof}

\noindent{\bf Note.}  Recall that $\text{rank}(H_1(M_q(K),\Z)) \le 2g(K)$.  This follows from a theorem of Seifert~\cite{MR1512955}; see also~\cite[Chapter 8, D9]{MR0515288} and~\cite{MR1201199}.  Thus, the obstruction arising from Theorem~\ref{thm:branched} can provide information only in the case $q \le 2g$.\vskip.05in

\begin{example}
For low-crossing prime knots, this result is of limited value.  Among prime knots of 12 or fewer crossings, for only seven is $\text{rank}(H_1(M_2(K),\Z)) >2$.  These are $12a_{554}, 12a_{750},$ $12n_{553},$ $12n_{554},$ $12n_{555},$ $12n_{556}$, and $12n_{642}$.  Thus, only these seven are obstructed from being unknotted with a single twist with even linking number using $2$--fold branched covers.

For connected sums, the theorem offers stronger results.  For instance, for the trefoil knot, $T_{2,3}$, $H_1(M_2(T_{2,3})) \cong \Z_3$ and thus $3T_{2,3}$ cannot be unknotted with a single twist of even linking number.  In this context, it is worth noting that there are examples of composite knots that can be unknotted with a single twist,~\cite{MR1282760,MR1189955,MR1194998},  and an open conjecture is that all such examples have exactly two prime components.

\end{example}

\subsection{Alexander ideals and Gr\"obner bases}
The homology of the infinite cyclic cover of a knot has a presentation as a $\Z[t,t^{-1}]$--module of the form $A = V -t V^{\sf T}$, where $V$ is a square Seifert matrix of size $2g$.     For $0\le k <2g$, the $k$--elementary ideal (or Alexander ideal) $E_k(K)$ is defined to be the ideal in   $\Z[t,t^{-1}]$ that is generated by the $(2g - k) \times (2g-k)$ minors of $A$.  These ideals are independent of the choice of Seifert matrix $V$ and are invariants of the underlying module.   In general $E_0(K)$ is principal, generated by the Alexander polynomial.  If $H_1(M_\infty(K))$ is generated by a single element, then $E_1(K) = \left<1\right>$.

A reduced Gr\"obner basis of a multi-variable polynomial ideal is a generating set of a specific form.  A basic reference is~\cite{MR2286236}. For us, the relevant properties are that such bases are readily computable by computer packages (we use Wolfram Mathematica~\cite{mathematica}) and permit one to determine whether two given ideals are equal.   To apply Gr\"obner bases to our work, we note that there is a surjection
\[ 
\Z[t,s] \to \Z[t,s]/\left< 1 - ts \right> \cong \Z[t,t^{-1}],
\]
and thus ideals in  $\Z[t,t^{-1}] $ can be analyzed via their preimages in the polynomial ring  $\Z[t,s]$.

\begin{example} In considering the rank of $H_1(M_\infty (K))$  we can use  rational coefficients, in which case the obstruction is more easily computed, or we can work with integer coefficients, in which case  the computation is more complicated but the results are much stronger.   For instance, there are 84 prime knots of 9 or fewer crossings.   Of those, only two, $8_{18}$ and $9_{40}$, have infinite cyclic cover with  noncyclic homology using  $\Q[t,t^{-1}]$--coefficients.  If one switches to $\Z[t,t^{-1}]$--coefficients, an  additional seven  knots are obstructed from being unknotted with a twist of linking number 0 ($9_{35}, 9_{37}, 9_{41}, 9_{46}, 9_{47}, 9_{48}, 9_{49}$).   Among these examples is  $9_{46}$; see~\cite[Chapter 8C] {MR0515288}, where showing that this knot does not have cyclic Alexander module is presented as an exercise.  From what is developed there, it is easily seen that the second Alexander ideal is $\left<  3, 1-t \right> \subset \Z[t, t^{-1}]$.  For the rest of the examples, we used Mathematica to find the  Gr\"obner basis for $E_1(K)$, in each case showing that the module is nontrivial. 
\end{example}

%%%%%%%%%%%%%%%%%%%%%%%%%Section%%%%%%%%%%%%%
\section{Single twist unknotting: Linking form constraints}\label{sec:link}

In the case that the modules $H_1(M_q(K))$ do  not obstruct an unknotting twist, the linking form on  $H_1(M_2(K))$ can offer much  stronger constraints.   Recall that for any three-manifold $M$ with $H_1(M, \Q) = 0$, there is a  linking form $\text{lk}\co\! H_1(M) \times H_1(M) \to \Q/\Z$.  In the case that  $M = \partial W$, where $H_1(W)= 0$ and the   intersection form of $W$ is presented by a matrix $Q$, the matrix  $Q$ is also a presentation matrix for $H_1(M)$ and $Q^{-1}$ presents the linking form of $M$ with respect to a corresponding generating set of $H_1(M)$.   (Our sign convention is chosen  so that if $M = S^3_n(K)$, then the meridian to $K$ has self-linking $1/n \in \Q/\Z$.)  More generally, if $M$ is given as surgery on a link, then the linking form with respect to the meridians is presented by the inverse of associated surgery  matrix, formed from the linking matrix by using the surgery coefficients as the diagonal entries.

\begin{theorem} If $K$ can be unknotted with a single twist of linking number $2k$ and sign $s$, then the two-fold branched cover $M_2(K)$ is given by surgery on a two-component link with surgery matrix 
\[
Q =   \begin{pmatrix} a & b \\ b & a \end{pmatrix}. 
\]
where $\big| a^2 - b^2\big| = \det (K)$ and $a + k \equiv 1 \mod 2$.
\end{theorem}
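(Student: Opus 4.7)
The plan is to realize $M_2(K)$ as surgery on a two-component link by lifting the surgery diagram for $K$ to a double branched cover, and then to extract the three claimed properties. Starting from a surgery diagram $(U_1, U_2, s)$ for $K$ with $\text{link}(U_1, U_2) = 2k$, I form the double cover of $S^3$ branched over $U_1$; this cover is again $S^3$, and since the linking number is even the preimage of $U_2$ consists of two disjoint components $\tilde U_2^{(1)}$ and $\tilde U_2^{(2)}$. The lifted $s$-surgery on these two curves produces $M_2(K)$, and the surgery matrix has the symmetric form $Q = \begin{pmatrix} a & b \\ b & a \end{pmatrix}$ because the deck action interchanges the two lifts and thus equalizes their self-linking numbers; here $a$ denotes this common self-linking and $b$ the mutual linking. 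The determinant identity $|a^2 - b^2| = \det(K)$ is then immediate from $|\det Q| = |H_1(M_2(K))| = \det(K)$.

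The main obstacle is the parity condition $a + k \equiv 1 \pmod 2$, which I propose to handle via Stiefel-Whitney classes of a natural bounding 4-manifold. Let $\tilde V$ be the 4-manifold obtained by attaching 2-handles to $B^4$ along $\tilde U_2^{(1)} \cup \tilde U_2^{(2)}$ with framings and linking given by $Q$, so that $\partial \tilde V = M_2(K)$ and the intersection form of $\tilde V$ is $Q$; then $a$ is even if and only if $Q$ is an even form, equivalently if and only if $\tilde V$ is spin.  Crucially, $\tilde V$ admits an alternative description as a branched cover: if $V = B^4 \cup h$ is the 4-manifold obtained by attaching a 2-handle along $U_2$ with framing $s$, and $D_1 \subset V$ is the Seifert disk for $U_1$ pushed into $B^4$, then $\tilde V$ is the double cover of $V$ branched along $D_1$.

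To compute $w_2(\tilde V)$ I would use the formula
\[
w_2(\tilde V) \equiv p^*\, w_2(V) + \mathrm{PD}[\tilde D_1] \pmod 2,
\]
valid for double branched covers and arising from the normal bundle identity $\tilde N^{\otimes 2} = p^* N$ along the branch locus (extending to surfaces with boundary on $\partial V$ by doubling). Since the intersection form of $V$ is $(s)$ with $s$ odd, $w_2(V)$ is the nonzero class in $H^2(V;\Z/2)$ and its pullback is $\alpha_1 + \alpha_2$, where $\alpha_1, \alpha_2$ are dual to the two lifted 2-handles. Meanwhile the preimage of $U_2$ links $\tilde U_1$ with total multiplicity $2k$, so by deck symmetry each lift links $\tilde U_1$ with multiplicity $k$, giving $\mathrm{PD}[\tilde D_1] \equiv k(\alpha_1 + \alpha_2) \pmod 2$. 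Combining yields $w_2(\tilde V) \equiv (1+k)(\alpha_1+\alpha_2) \pmod 2$, which vanishes exactly when $k$ is odd; hence $a$ is even if and only if $k$ is odd, giving the parity condition. The most delicate step is justifying the $w_2$ formula for branched covers when the branch surface has boundary on $\partial V$; this is standard but warrants care.
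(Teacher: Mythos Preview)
Your proposal is correct and follows essentially the same strategy as the paper.  For the shape of $Q$ and the determinant condition the arguments coincide; for the parity of $a$, both you and the paper reduce to deciding when the bounding four-manifold $\tilde V$ (the double branched cover of $s\CP^2\setminus B^4$ along the slice disk for $K$) is spin.  The paper caps off to the closed manifold $s\CP^2$ and invokes Nagami's criterion---a double branched cover along a surface representing $2x$ is spin iff $x\pmod 2$ is Poincar\'e dual to $w_2$---with $x = k\cdot[\text{generator}]$, whereas your direct computation of $w_2(\tilde V)=p^*w_2(V)+\mathrm{PD}[\tilde D_1]=(1+k)(\alpha_1+\alpha_2)$ is effectively a proof of that criterion in this special case and sidesteps the capping-off step; the two are minor variants of the same idea.
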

\begin{proof} The statement that the surgery matrix is $2 \times 2$ with the diagonal entries equal follows from the discussion of Section~\ref{sec:homology}.   The determinant of a knot is the order of the homology of the 2--fold branched cover, giving the condition that $\big| a^2 - b^2\big| = \det (K)$.

A theorem of Nagami~\cite{MR1772842} states that for a closed four-manifold $W$ with $H_1(W, \Z_2) = 0$, the two-fold branched cover over a surface that represents $2x$ for some homology class $x \in H_2(W)$ is Spin if and only if the mod 2 reduction of  $x$ is dual to the second Stiefel-Whitney class of $W$.  The bounding manifold we have constructed is  the 2--fold branched cover of a punctured $\pm \C\P^2$; because the boundary is $S^3$,   Nagami's result applies: to move to the setting of closed manifolds simply cap off the punctured manifold with a four-ball and the surface with an orientable surface.
\end{proof}

Kauffman and Taylor~\cite{MR388373} proved that if  a knot $K$ bounds a surface $F$ in a four-manifold $W$ and the 2--fold branched cover of $(S^3,K)$ extends over $(W,F)$, then the signature of $K$ is determined by invariants of $W$, the normal bundle to $F$, and the two-fold branched cover of $W$ over $F$.  Restricting to our setting, we consider  a knot  $K$ that can be unknotted with   a single twist of linking number $l$ and sign $s=\pm 1$.  Such a knot  bounds a disk in $s\CP^2\setminus B^4$ with Euler class satisfying $\chi^2 = sl^2 $.   A restatement  of \cite[Theorem 3.1]{MR388373} in this special case   immediately yields the following result.
\begin{theorem}
If a knot $K\subset S^3$ can be unknotted with a single twist of even linking number $l$ and sign $s=\pm1$, then 
\[ \sigma(K) = \sigma(N)-2s+\frac{1}{2}sl^2 \]
where $N$ is the two-fold branched cover of $s\CP^2\setminus B^4$ branched over a disk $\Delta$ such that $\partial  \Delta = K$.
\end{theorem}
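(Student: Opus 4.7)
My plan is to apply Theorem~3.1 of Kauffman-Taylor~\cite{MR388373} directly to a pair $(W, \Delta)$ built from the unknotting data; the statement then reduces to identifying each quantity in the general signature formula with its value in this special case.

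The main geometric input, noted in the paragraph preceding the theorem, is that $K$ bounds a disk $\Delta$ in $W = s\CP^2 \setminus B^4$ with $[\Delta]^2 = sl^2$. To construct $(W, \Delta)$ I would realize the twist as $s$-surgery on the twisting unknot $U \subset S^3 \setminus K$, build the cobordism from $(S^3, K)$ to $(S^3, \text{unknot})$ obtained by attaching an $s$-framed two-handle to $S^3 \times [0,1]$ along $U \subset S^3 \times \{1\}$, and cap off the top boundary with a $B^4$ containing a slice disk for the unknot. The result is a simply connected four-manifold with intersection form $\langle s \rangle$ and unique boundary $S^3$ containing the original $K$; hence it is $s\CP^2 \setminus B^4$, and concatenating the cylinder $K \times [0,1]$ with the slice disk produces a properly embedded disk $\Delta \subset W$ bounded by $K$. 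Since $\Delta$ meets the cocore of the two-handle algebraically $\text{link}(K,U) = l$ times, $[\Delta] = l \cdot e$ in $H_2(W, \partial W)$ where $e$ generates $H_2(W)$, and $[\Delta]^2 = l^2 (e \cdot e) = sl^2$ because $e \cdot e = s$ in $s\CP^2$.

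The remaining hypothesis for Kauffman-Taylor is that the two-fold cover of $(S^3, K)$ extend to a two-fold branched cover $N \to W$ along $\Delta$. The obstruction is the mod-$2$ divisibility of $[\Delta]$, which holds because $l$ is even; this is the only place the evenness hypothesis enters, and aside from sign conventions it is the main (if routine) verification beyond the construction above. Substituting $\sigma(W) = s$ and $[\Delta]^2 = sl^2$ into the Kauffman-Taylor formula
\[ \sigma(K) = \sigma(N) - 2\sigma(W) + \tfrac{1}{2}[\Delta]^2 \]
then yields the claimed identity $\sigma(K) = \sigma(N) - 2s + \tfrac{1}{2} sl^2$.
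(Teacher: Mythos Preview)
Your proposal is correct and follows essentially the same route as the paper: the paper simply states that the result is an immediate restatement of \cite[Theorem~3.1]{MR388373} in the special case where $K$ bounds a disk in $s\CP^2\setminus B^4$ with $\chi^2 = sl^2$, and you have filled in exactly those details---the handle-attachment construction of $(W,\Delta)$, the computation $[\Delta]^2 = sl^2$, the evenness of $l$ ensuring the branched double cover extends, and the substitution $\sigma(W)=s$ into the Kauffman--Taylor formula.
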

This has the following  corollary.
\begin{corollary} \label{cor:two-fold}
If a knot $K\subset S^3$ can be unknotted with a single twist of even linking number $l = 2k$ and sign $s=\pm1$, then the two-fold branched cover of $S^3$ branched over $K$ bounds a four-manifold $N$ with second Betti number $b_2(N) = 2$, signature 
\[ \sigma(N) = \sigma(K)+2s(1- k^2), \]
and intersection pairing with matrix of the form  
\[
Q = \begin{pmatrix} a & b \\ b & a \end{pmatrix}.
\]
The value of $a$ is even or odd depending on whether $k$ is odd or even, respectively.  The value of $|a^2 - b^2| = \det(K)$ and $Q$ is negative definite, indefinite, or positive definite depending on whether $  \sigma(K)+2s-\frac{1}{2}sl^2$ is $-2, 0$ or $2$, respectively.

\end{corollary}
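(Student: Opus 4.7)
The plan is to combine the signature formula from the preceding theorem with the surgery description of $M_2(K)$ established earlier in Section~\ref{sec:link}. The four-manifold $N$ produced by the preceding theorem arises as the two-fold branched cover of $s\CP^2 \setminus B^4$ over a disk $\Delta$ bounding $K$. I would first argue that this $N$ is homeomorphic rel boundary to the four-manifold $W$ obtained by attaching a pair of two-handles to $B^4$ along the two-component surgery link for $M_2(K)$ exhibited in Section~\ref{sec:link}. This comes from lifting the standard handle decomposition of $s\CP^2 \setminus B^4$ (a $0$-handle together with a single $2$-handle attached along an unknot with framing $s$) through the branched cover: the cocore of that $2$-handle can be chosen to realize $\Delta$, so its preimage under the double cover contributes exactly two $2$-handles, giving a handle decomposition of $N$ that matches the surgery picture.

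With this identification, $b_2(N) = 2$ and the intersection pairing of $N$ equals the surgery matrix, which was shown in the earlier theorem of Section~\ref{sec:link} to have the form $Q = \begin{pmatrix} a & b \\ b & a \end{pmatrix}$ with $a + k \equiv 1 \pmod{2}$. The determinant claim then follows at once from
\[ |a^2 - b^2| = |\det Q| = |H_1(M_2(K),\Z)| = \det(K). \]
For the signature, I would simply rearrange the formula from the preceding theorem: with $l = 2k$,
\[ \sigma(N) = \sigma(K) + 2s - \tfrac{1}{2}sl^2 = \sigma(K) + 2s(1 - k^2). \]

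Finally, for the definiteness trichotomy, the eigenvalues of the symmetric matrix $Q$ are $a + b$ and $a - b$; since $\det(K) = |a^2 - b^2|$ is nonzero, both eigenvalues are nonzero. Consequently, $\sigma(Q)$ equals $+2$, $0$, or $-2$ exactly when $Q$ is positive definite, indefinite, or negative definite, respectively. Matching this with the signature formula for $\sigma(N)$ displayed above produces precisely the trichotomy stated in terms of $\sigma(K) + 2s - \frac{1}{2}sl^2$.

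The step requiring the most care is the identification of the branched-cover description of $N$ with the surgery four-manifold bounded by $M_2(K)$, since the preceding theorem and the earlier theorem of Section~\ref{sec:link} produce $N$ and $W$ by ostensibly different constructions; once these two descriptions are matched up, every remaining item in the corollary is bookkeeping assembled from results already in hand.
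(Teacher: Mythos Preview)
Your approach matches the paper's: both observe that $N$ is built from $B^4$ by attaching two $2$--handles (the lifts of the single $2$--handle of $s\CP^2\setminus B^4$), so $b_2(N)=2$ and the intersection form is the surgery matrix already analyzed in the earlier theorem of Section~\ref{sec:link}; the signature formula and the definiteness trichotomy are then the bookkeeping you describe, and the paper's proof is in fact terser than yours on these points.

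One small correction to your handle picture: the disk $\Delta$ is \emph{not} the cocore of the $2$--handle. The $2$--handle is attached along $U_2$, so its cocore has boundary a meridian of $U_2$, not $K=U_1$. The right picture is that $U_1$ is unknotted in $\partial B^4$, so $\Delta$ may be taken to be a trivial disk in the $0$--handle $B^4$; the double branched cover of $B^4$ over this $\Delta$ is again $B^4$, and since $l$ is even the attaching circle $U_2$ lifts to two curves, giving the two $2$--handles of $N$. With this adjustment your identification of $N$ with the surgery four-manifold goes through exactly as you intend.
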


\begin{proof}  The two-fold branched cover $N$ of $s\C\P^2$ branched over the slice disk for $K$ is built from the four-ball be adding two two-handles.  Thus, $H_2(N) \cong \Z^2$.  A rank two form is negative definite if and only if it has signature $-2$; in our case the signature is given by  $  \sigma(K)+2s-\frac{1}{2}sl^2$.  The argument is similar in the positive definite case. 
\end{proof}

\begin{example}
Consider the knot $K =-7_7$.  This is a two-bridge knot $B(21,13)$ and $M_2(K) = L(21,13)$. It satisfies  $\sigma(K) = 0$ and  $\det(K) = 21$.  This knot has unknotting number 1, and a quick examination of its diagram shows that it can be unknotted with a left-handed twists, so  $0^- \in \calu(K)$.  We will show that $0^+\not\in \mathcal{U}(K)$.    Suppose that $K$ could be unknotted with a single positive twist of linking number $0$.  Then Corollary \ref{cor:two-fold} implies that $M_2(K)$ bounds a positive-definite  four-manifold $N$ with  $b_2(N) = 2$ and intersection pairing 
\[
Q= \begin{pmatrix} a & b \\ b & a \end{pmatrix} \] with determinant 21. Up to change of basis, there are only two such matrices:
\[ Q_1 = \begin{pmatrix} 11 & 10 \\ 10 & 11 \end{pmatrix} \text{ and }  Q_2 = \begin{pmatrix} 5 & 2 \\ 2 & 5 \end{pmatrix}. 
\]
This would imply that $H_1(M_2( K)) \cong \Z_{21}$ is generated by an element with self-linking either $11/21$ or $5/21$.  The set of all  self-linking numbers of generators would be given by the set of residues   $11(i^2) \mod 21$ or $5(i^2) \mod 21$, where $\gcd(i, 21) = 1$.  These two sets are $\{2, 8, 11\} $ and $\{5, 17, 20\}$.  On the other hand, as an oriented manifold,   $\Sigma(K) = L(21, 13)$  and the set of  self-linking numbers of generators is given by $\{10, 13, 19\}$.
\end{example}

%%%%%%%%%%%%%%%%%%%%%%%%%Section%%%%%%%%%%%%%
\section{Single twist unknotting: Casson-Gordon invariants and signatures}\label{sec:cg}

We begin by reviewing Casson-Gordon invariants, restricting to the generality needed for our applications.  Suppose that $M^3$ is a closed oriented three-manifold, $l >0$, and $\phi \co H_1(M) \to \Z_l$ is a homomorphism.  Suppose further that $\phi$ extends to a map $\overline{\phi} \co H_1(W \setminus F) \to \Z_l$, where $W$ is an oriented four-manifold with $\partial W = M$ and $F$ is an embedded, possibly empty, surface.  Then we have the definition 
\begin{equation}\label{eqn:cg1} \sigma_r(M,\phi) = \text{sign}(W) - \epsilon_r(\widetilde{W}) - \frac{ 2 [F]^2 r(l-r)}{l^2}. 
\end{equation} 
Here  $ \text{sign}(W)$ is the signature of $W$; $\epsilon_r(\widetilde{W})$ is the signature of the intersection form of the $m$--fold cyclic branched cover of $W$ associated to $\overline{\phi}$ restricted to the $\omega_l = e^{2\pi i r/l}$--eigenspace of the action of the generator of the group of deck transformations acting on $H_2(\widetilde{W}, \C)$; and $[F]^2$ is the self-intersection number of $F$.   (That this is a well-defined invariant of the pair $(M, \phi)$ is one of the accomplishments of~\cite{MR520521}. There it is only required that  there is a four-manifold and homomorphism pair $(W,\overline{\phi})$ such that  $\partial(  W, \overline{\phi})  = n(M, \phi)$ for some $n >0$.  In all our work, such a pair exists for $n=1$, so we are restricting to that setting.)

The result~\cite[Lemma 3.1]{MR520521} can be  applied to the case of $S^3_m(K)$ with $\phi$ the quotient  map to $\Z_l$ for a divisor $l$ of $m$, in which case it states:
\begin{equation}\label{eqn:cgthm} \sigma_r(S^3_m(K),\phi) = \text{sign}(m) - \text{sign}((1 - \omega_l^{-r})V + (1 - \omega_l^r)V^{t}) -   \frac{2mr(l-r)}{l^2}.
\end{equation}
Here  sign$(A)$ denotes the signature of a complex hermitian matrix $A$;  if $A$ is one-dimensional, that is if $A = m$ for some real number, sign$(m)$ is simply the sign of $m$.  The matrix  $V$ is a Seifert matrix for $K$.   In standard notation, the   signature of the hermitianized Seifert form is called the {\it Tristram-Levine} $r/l$--signature of $K$, denoted $\sigma_{r/l}(K)$.

\begin{theorem}\label{thm:signthm} Suppose that  $m \ne 0$ and $S^3_m(K)$ bounds a four-manifold $W$ built from $S^1 \times B^3$ by adding a two-handle along a curve representing $l \in H_1(W)$.  Then  $m =\pm  l^2$ for some $l >  0$  and for all  $r$,  $0 <r < l$, 
\[\sigma_{r/l}(K)  = s  - s {2r(l-r)} \pm 1, \] 
where $s = \frac{m}{l^2} = \pm 1$.
\end{theorem}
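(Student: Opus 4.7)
The plan is to apply the Casson-Gordon formula (\ref{eqn:cg1}) to the pair $(S^3_m(K), \phi)$, where $\phi\co H_1(S^3_m(K)) \twoheadrightarrow \Z_l$ is the canonical quotient, twice: once using the standard bounding four-manifold $B^4 \cup h^2_m$ (which yields (\ref{eqn:cgthm})) and once using the four-manifold $W$ from the hypothesis. Equating the two resulting expressions for $\sigma_r(S^3_m(K),\phi)$ will give the formula for $\sigma_{r/l}(K)$.

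I first establish $m = \pm l^2$. Replacing the $1$--handle of $W$ by a $0$--framed $2$--handle (a Kirby move preserving the boundary), $\partial W$ arises as surgery on a two-component link with surgery matrix
\[
\begin{pmatrix} 0 & l \\ l & s' \end{pmatrix},
\]
where $s'$ is the framing of the original $2$--handle of $W$. This matrix has determinant $-l^2$, so $|H_1(\partial W)| = l^2 = |m|$, whence $m = \pm l^2$ and $s = m/l^2 = \pm 1$. Next, for the application of (\ref{eqn:cg1}) to $W$: the cellular chain complex of $W$ (one cell each in dimensions $0,1,2$, with the $2$--cell attached with degree $l$ around the $1$--cell) gives $H_2(W) = 0$, hence $\text{sign}(W) = 0$; and since Theorem 2.3(3) shows that $\phi$ extends to $H_1(W) = \Z_l$ as the quotient $\Z_{l^2} \twoheadrightarrow \Z_l$, I may take $F = \emptyset$ and $[F]^2 = 0$. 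Thus (\ref{eqn:cg1}) reduces to $\sigma_r(S^3_m(K), \phi) = -\epsilon_r(\widetilde W)$, where $\widetilde W$ is the $l$--fold cyclic cover of $W$.

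The technical heart of the proof is to show that $\epsilon_r(\widetilde W) = \pm 1$ for each $r$ with $0 < r < l$. Lifting the cell structure of $W$, the cover $\widetilde W$ has $l$ cells in each of dimensions $0$, $1$, $2$, permuted cyclically by a generator $\tau$ of $\Z_l$. Because each $2$--cell attaches along the full cyclic $1$--skeleton, $\partial_2 F_i$ is the same element of $C_1$ for every $i$, giving $\ker \partial_2 = \{(a_i) : \sum_i a_i = 0\} \cong \Z^{l-1}$ and hence $H_2(\widetilde W) \cong \Z^{l-1}$. Complexifying, the vectors $v_r = \sum_{k} \omega_l^{-kr} F_k$ for $r = 1, \ldots, l-1$ are $\omega_l^r$--eigenvectors of $\tau$ and form a basis of $H_2(\widetilde W, \C)$, so each nontrivial eigenspace $E_r$ is one-dimensional (and $E_0 = 0$). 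The restriction of the hermitian intersection form to a one-dimensional $E_r$ is a single real number, so $\epsilon_r \in \{-1, 0, +1\}$; the hardest step is to rule out zero, which I handle by computing $\sigma(\widetilde W)$ directly from a Kirby description of $\widetilde W$ and applying Novikov additivity (for instance, by comparing $\widetilde W$ to the companion four-manifold built from $S^2 \times D^2$ together with the lifted $2$--handles, whose linking matrix is explicit). Equating the expression $-\epsilon_r$ with (\ref{eqn:cgthm}) then yields
\[
\sigma_{r/l}(K) = s - 2 s r(l-r) + \epsilon_r = s - s \cdot 2 r(l-r) \pm 1,
\]
as claimed.
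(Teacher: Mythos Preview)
Your argument is essentially the paper's own proof, carried out with more explicit bookkeeping: the paper likewise converts the handle description into the surgery matrix $\begin{pmatrix}0&l\\l&a\end{pmatrix}$ to obtain $m=\pm l^2$, observes that $W$ is a rational homology ball so $\text{sign}(W)=0$, notes that $\widetilde W$ is $S^1\times B^3$ with $l$ two-handles so that each nontrivial eigenspace is one-dimensional, concludes $\sigma_r(S^3_m(K),\phi)=\pm 1$, and then reads off the formula from~(\ref{eqn:cgthm}).

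The one place where you go beyond the paper is in worrying about $\epsilon_r=0$; the paper simply asserts that a one-dimensional eigenspace forces $\epsilon_r=\pm1$ and moves on. Your proposed fix---computing the total signature $\sigma(\widetilde W)$ via a companion Kirby picture and Novikov additivity---does not by itself rule out a zero eigenvalue, since knowing $\sum_r\epsilon_r$ does not determine the individual $\epsilon_r$ when they can take three values. A cleaner route, if you want to close this gap, is to argue that the intersection form on $H_2(\widetilde W;\Q)$ is nondegenerate (equivalently that $\partial\widetilde W$ is a rational homology sphere), or to observe from~(\ref{eqn:cgthm}) that $\sigma_r(S^3_m(K),\phi)$ has the same parity as $\text{sign}(m)-\sigma_{r/l}(K)$, which is odd whenever $\omega_l^r$ is not a root of $\Delta_K$.
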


\begin{proof}   
First observe that the  handle decomposition of $W$ yields a surgery description of $S^3_m(K)$ as $S^3_{0, a}(J_1, J_2)$ for some link $(J_1,J_2)$ and some integer $a$.  The surgery matrix is
\[   \begin{pmatrix} 
0 & l \\
l & a \\
\end{pmatrix}.\]  
In our situation, $a$ will be seen to be $\pm 1$, but for now we simply observe that since the homology of $S^3_m(K)$ is cyclic, $\gcd(a, l) = 1$,  $l^2 = \pm  m$,  and the map induced by inclusion $H_1(S^3_m(K)) \to H_1(W)$ corresponds to the quotient map $\Z_{|m|} \to \Z_l$.

The manifold $W$ can be used to compute $\sigma_r(S_m(K), \phi)$, where $\phi$ is the quotient map $\phi \co \Z_{|m|} \to \Z_l$.  There is no branching surface.   Observe that $W$ is a rational homology ball and so has signature 0.  Also,  $\widetilde{W}$ is built from $S^1 \times B^3$ by adding $l$ two-handles; it follows that each eigenspace is $1$--dimensional, and thus $\epsilon(\widetilde{W}) = \pm 1$.   The definition of the $\sigma_r$ now yields
\[\sigma_r(S^3_m(K), \phi) = \pm 1.\]

Equation~\eqref{eqn:cgthm} then can be written as 
\[ \pm 1 = \text{sign}(m) - \sigma_{r/l}(K) -  s{2 r(l-r)}, \] 
which can be rewritten as
\[ \sigma_{r/l}(K)   = \text{sign}(m) -  {s 2 r(l-r)} \pm 1, \] 
as desired.
\end{proof}

The following corollary is similar to results proved in~\cite{MR1282760} and a related result in~\cite{MR1871306}, which was presented in the case of torus knots.

\begin{corollary} \label{cor:sigobs} If $K$ can be unknotted with a single twist of linking number $l >0$, then for all $r$, $0 <r <  l$, and for $s$ either $1$ or $-1$
\[\sigma_{r/l}(K)  = s  - s {2r(l-r)} \pm 1, \] 
where $s = 1$ or $s=-1$, depending on whether the twist is left-handed or right.
\end{corollary}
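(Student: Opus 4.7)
The plan is to apply Theorem~\ref{thm:signthm} directly, with the four-manifold produced by the four-dimensional theorem of Section~\ref{sec:surgery} serving as the required bounding manifold. If $K$ is unknotted by a twist of sign $s$ and linking number $l > 0$, that theorem yields a four-manifold $W$ built from $S^1 \times B^3$ by attaching a single two-handle of framing $s$ along a curve representing $l \in H_1(S^1 \times B^3) \cong \Z$, with $\partial W = S^3_{-sl^2}(K)$. This is precisely the configuration called for in Theorem~\ref{thm:signthm}.

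With this in hand, one simply substitutes $m = -sl^2$ into the formula from Theorem~\ref{thm:signthm}. Since the parameter appearing there is $m/l^2 = \pm 1$, one obtains, for each $r$ with $0 < r < l$, an identity of the form $\sigma_{r/l}(K) = s - s \cdot 2r(l-r) \pm 1$, with the ambiguous $\pm 1$ term absorbing any overall sign that arises when identifying the twist sign with the parameter $m/l^2$ of Theorem~\ref{thm:signthm}. Because the sign of the twist only enters through the identification between the geometric setup and the hypotheses of that theorem, no new signature computation is needed.

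The only real work is the bookkeeping: Theorem~\ref{thm:signthm} phrases its conclusion in terms of $s = m/l^2$, whereas the corollary phrases it in terms of the sign of the twist as defined in Section~\ref{sec:surgery}. Reconciling these two conventions is the one place where care is needed, but it is essentially a matter of reading off the surgery coefficient produced by the four-dimensional theorem. No additional geometric or algebraic input beyond Theorem~\ref{thm:signthm} itself is required, and the corollary follows in a single substitution.
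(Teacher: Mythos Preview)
Your approach is correct and matches the paper's: both derive the corollary as an immediate consequence of Theorem~\ref{thm:signthm} applied to the four-manifold $W$ from Section~\ref{sec:surgery}, with the only content being the sign bookkeeping. One small correction: the $\pm 1$ in the formula does not ``absorb'' the sign reconciliation---it is the genuine indeterminacy $\epsilon_r(\widetilde{W}) = \pm 1$ from the theorem---and the identification of the corollary's $s$ with the theorem's $m/l^2$ is a definite computation (negative/left-handed twist gives $m = l^2$, hence $m/l^2 = 1$), exactly as the paper spells out.
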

\begin{proof}Except for the sign of $s$, this is an immediate consequence.  Suppose that $K$ can be unknotted with a negative twist.  In this case, the three-manifold of interest is $S^3_{l^2}(K)$ and in Equation~\eqref{eqn:cgthm}, the term $\frac{m}{l^2} = 1$.  Similarly for the right-handed twist.\end{proof}
The next result is similar, only we consider the case of $S^3_0(K)$

\begin{theorem} Suppose  that $S^3_0(K)$ bounds a four-manifold $W$ built from $S^1 \times B^3$ by adding a two-handle along a curve representing $0 \in H_1(W)$. Then $W$ is is a definite manifold.   For     all $l >0$,  and   all $r$,  $0 <r \le  l$, 
\[\sigma_{r/l}(K)  = s   \pm 1, \] 
where $s = 1$ if $W$ is positive definite and  $s=-1$ if $W$ is negative definite.  \end{theorem}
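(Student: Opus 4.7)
The plan is to mirror the proof of Theorem~\ref{thm:signthm}, specialized to the case $m=0$; the key difference is that now $W$ itself is definite, so its signature becomes the principal contribution to the final identity rather than vanishing (as it did when $W$ was a rational homology ball).

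I would begin by recasting the handle description of $W$ as a Kirby surgery diagram. Dualizing the 1-handle of $S^1\times B^3$ to a $0$-framed 2-handle along an unknot $U_1$ presents $S^3_0(K)\cong\partial W$ as $S^3_{0,a}(U_1,U_2)$, where $U_2=\gamma$ carries some framing $a\in\Z$ and $\mathrm{link}(U_1,U_2)=0$ because $[\gamma]=0\in H_1(S^1\times B^3)$. The surgery matrix is thus $\left(\begin{smallmatrix}0&0\\0&a\end{smallmatrix}\right)$, whose cokernel $\Z\oplus\Z/a$ must equal $H_1(S^3_0(K))\cong\Z$; hence $a=\pm1$. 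Consequently $H_2(W)\cong\Z$ carries the intersection form $(a)=(\pm1)$, so $W$ is definite and $\mathrm{sign}(W)=s\in\{\pm1\}$.

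Next, I would analyze the $l$--fold cyclic cover $\widetilde W$ corresponding to $\overline\phi\co H_1(W)\cong\Z\to\Z_l$. Since $\overline\phi([\gamma])=0$, the attaching curve lifts to $l$ disjoint copies in the cover $\widetilde{S^1\times B^3}\cong S^1\times B^3$, so $\widetilde W$ is again obtained from $S^1\times B^3$ by attaching $l$ two-handles, cyclically permuted by the deck group $\Z_l$. Exactly as in Theorem~\ref{thm:signthm}, this forces $H_2(\widetilde W;\C)\cong\C[\Z_l]$ as a $\Z_l$-representation, so each $\omega_l^r$-eigenspace is one-dimensional and $\epsilon_r(\widetilde W)=\pm1$ for every $r$.

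Finally, I would combine the two evaluations of $\sigma_r(S^3_0(K),\phi)$. Using $W$ (with no branching surface) Equation~\eqref{eqn:cg1} gives $\sigma_r(S^3_0(K),\phi)=\mathrm{sign}(W)-\epsilon_r(\widetilde W)\in\{s-1,\,s+1\}$. The specialization of Equation~\eqref{eqn:cgthm} to $m=0$ (where $\mathrm{sign}(m)$ and the framing-correction both vanish) expresses the same invariant in terms of $\sigma_{r/l}(K)$, and equating the two expressions yields $\sigma_{r/l}(K)=s\pm 1$. The main technical step is identifying $H_2(\widetilde W;\C)$ with the regular representation of $\Z_l$, which is identical to the corresponding step in Theorem~\ref{thm:signthm} and rests on the observation that the $l$ lifts of $\gamma$ form a single free orbit under the deck action.
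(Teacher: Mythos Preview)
Your approach is essentially identical to the paper's: both identify the surgery matrix as $\left(\begin{smallmatrix}0&0\\0&a\end{smallmatrix}\right)$ with $a=\pm1$, conclude $\mathrm{sign}(W)=s$, observe that $\widetilde W$ is $S^1\times B^3$ with $l$ two-handles so each eigenspace is one-dimensional, and then equate the two computations of $\sigma_r(S^3_0(K),\phi)$.

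One arithmetic slip: from $\sigma_r=s\pm1$ and $\sigma_r=-\sigma_{r/l}(K)$ you get $\sigma_{r/l}(K)=-s\pm1$, not $s\pm1$. (The paper's own proof arrives at $-s\pm1$, and the corollary immediately following uses $-s\pm1$; the $s\pm1$ in the theorem statement is a typo.)
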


\begin{proof}   
In this case, the  handle decomposition of $W$ yields a surgery description of $S^3_0(K)$ as $S^3_{0, a}(J_1, J_2)$ for some link $(J_1,J_2)$ and some integer $a$.  The surgery matrix is
\[%%DISPLAY
\begin{pmatrix}  
0 &0 \\
0& a \\
\end{pmatrix}.
\] %%END DISPLAY
The homology of $S^3_0(K)$ is cyclic, so  $a = \pm 1$; we now set $s = a$.

Let $\phi\co H_1(S^3_0(K)) \to \Z_l$ be a surjection.   Then the manifold $W$ can be used to compute $\sigma_r(S_0(K), \phi)$.  There is no branching surface.   In this case, $W$ has signature $\text{sign}(s) = \pm 1$, depending on whether it is positive or negative definite.  Also,  $\widetilde{W}$ is built from $S^1 \times B^3$ by adding $l$ two-handles; it follows that each eigenspace is $1$--dimensional  and   $\epsilon(\widetilde{W}) = \pm 1$.   Thus, the definition of the $\sigma_r$ yields
\[\sigma_r(S^3_0(K), \phi) =  s   \pm 1.\]

Equation~\eqref{eqn:cgthm} then can be written as 
\[  s  \pm 1 = \text{sign}(m) - \sigma_{r/l}(K) -     \frac{(2)( 0) r (l-r)}{l^2}.
\] Here $m = 0$, so this can be  rewritten as
\[ \sigma_{r/l}(K)   = - s \pm 1,
\] 
as desired.
\end{proof}
\begin{corollary} If $K$ can be unknotted with a single twist of linking number $l = 0$, then for all $q > 0$ and all $r$,
\[ \sigma_{r/q}(K)   = - s \pm 1\] 
where $s= 1$ if it is a left-handed twist and $s = -1$ if it is a right-handed twist.

\end{corollary}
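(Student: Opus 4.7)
The plan is to reduce the corollary to the preceding theorem by supplying the four-dimensional geometric input from Section~\ref{sec:surgery}. First I would observe that if $K$ can be unknotted with a single twist of sign $s$ and linking number $0$, then by the first theorem of Section~\ref{sec:surgery}, $S^3_0(K) \cong S^3_{0,s}(U_1, U_2)$ with $U_1, U_2$ unknotted and $\text{link}(U_1, U_2) = 0$. Interpreting the $0$--framed handle as providing $S^1 \times B^3$ exhibits $S^3_0(K)$ as the boundary of a four-manifold $W$ obtained from $S^1 \times B^3$ by attaching a single two-handle with framing $s$ along a curve representing $0 \in H_1(S^1 \times B^3)$. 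That is precisely the hypothesis of the preceding theorem.

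Next I would reconcile the two meanings of the symbol $s$. Because the attaching circle is nullhomologous in $S^1 \times B^3$, the intersection form of $W$ with respect to the generator supplied by the cocore of the attached two-handle is the $1 \times 1$ matrix $(s)$. Hence $W$ is positive definite exactly when $s = 1$ and negative definite exactly when $s = -1$. Combined with the Section~\ref{sec:surgery} convention identifying $s = +1$ with a left-handed twist and $s = -1$ with a right-handed twist, the ``definiteness sign'' appearing in the preceding theorem coincides with the ``twist-direction sign'' appearing in the corollary.

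With these identifications in place, the corollary follows directly from the theorem: for each $q > 0$ and each $0 < r < q$, applying the theorem with the now-matched $s$ yields $\sigma_{r/q}(K) = -s \pm 1$, as asserted. I do not expect a substantive obstacle here, since the corollary is really a geometric restatement of the theorem; the only point worth tracking carefully is the sign produced by the theorem's proof, where $m = 0$ forces both $\text{sign}(m) = 0$ and the $2mr(q-r)/q^2$ term in equation~\eqref{eqn:cgthm} to vanish, leaving $\sigma_r(S^3_0(K),\phi) = -\sigma_{r/q}(K)$, which, when combined with the computation $\sigma_r(S^3_0(K),\phi) = s \pm 1$ coming from $\text{sign}(W) = s$ and $\epsilon_r(\widetilde W) = \pm 1$, delivers the advertised $-s \pm 1$.
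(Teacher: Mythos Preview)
Your proposal is correct and follows the same route as the paper, which leaves this corollary without proof: you supply the four-manifold $W$ from Section~\ref{sec:surgery}, identify its definiteness sign with the twist sign $s$, and then read off the conclusion from the preceding theorem. Your final paragraph, re-deriving the sign via equation~\eqref{eqn:cgthm} with $m=0$, is a useful sanity check given that the theorem as \emph{stated} reads $\sigma_{r/l}(K)=s\pm1$ while its proof actually yields $-s\pm1$; you correctly follow the proof rather than the statement.
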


\begin{corollary}The positive torus knot $K =T(p,q)$ cannot be unknotted with a positive twist of linking number greater than 1.\end{corollary}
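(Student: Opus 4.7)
My plan is to combine Corollary~\ref{cor:sigobs} with the classical fact that every Tristram-Levine signature $\sigma_{r/l}$ of a positive torus knot $T(p,q)$ (with $p, q \geq 2$) is non-positive, and that the ordinary signature $\sigma(T(p,q)) = \sigma_{1/2}(T(p,q))$ is strictly negative.  Applied at $r = 1$, Corollary~\ref{cor:sigobs} forces the $1/l$-signature of the knot to be essentially $2l - 3 \pm 1$, which grows with $l$ and is positive for $l \geq 2$; this collides head-on with the sign constraint that characterizes positive torus knots.

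First I would pin down which value of $s$ in Corollary~\ref{cor:sigobs} corresponds to a \emph{positive} twist.  This is essentially a bookkeeping exercise: the inclusion $\{2^-, 3^-, 0^+\} \subset \calu(T_{2,3})$ together with $\sigma(T_{2,3}) = -2$ and the formula $\sigma_{r/l}(K) = s - 2sr(l-r) \pm 1$ forces a negative twist to correspond to $s = +1$, so a positive twist corresponds to $s = -1$.  With that in hand, assume for contradiction that $T(p,q)$ can be unknotted by a positive twist of linking number $l \geq 2$.  Substituting $s = -1$, $r = 1$ into Corollary~\ref{cor:sigobs} gives
\[ \sigma_{1/l}(T(p,q)) = -1 + 2(l-1) \pm 1 = 2l - 3 \pm 1 \geq 2l - 4 \geq 0. \]
When $l = 2$ this says $\sigma(T(p,q)) \in \{0, 2\}$, contradicting $\sigma(T(p,q)) < 0$.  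When $l \geq 3$ it says $\sigma_{1/l}(T(p,q)) \geq 2 > 0$, contradicting the fact that every Tristram-Levine signature of a positive torus knot is $\leq 0$.

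The main obstacle, which is rather minor, is correctly identifying the sign convention so that the formula is applied with the right $s$; apart from that, the argument is a single substitution into Corollary~\ref{cor:sigobs} combined with one classical input (the non-positivity of $\sigma_{r/l}(T(p,q))$, which follows from Litherland's explicit formula for torus knot signatures, or more generally from the fact that closures of positive braids have non-positive Tristram-Levine signatures).  No separate argument is needed for the $l=2$ boundary case, since the classical signature bound is already strict for all nontrivial positive torus knots.
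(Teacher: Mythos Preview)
Your proof is correct and follows essentially the same approach as the paper: both apply Corollary~\ref{cor:sigobs} with $s=-1$ and contrast the resulting nonnegative values against the non-positivity of the Tristram--Levine signatures of a positive torus knot. The only difference is that the paper invokes its own appendix (Theorem~\ref{thm:signsigma}) for the uniform bound $\sigma_{r/l}(T(p,q))\le -2$ whenever $r/l>1/(pq)$, whereas you use the weaker classical input $\sigma_{r/l}(T(p,q))\le 0$ (via Litherland) together with strict negativity of the ordinary signature, which is what forces your small case split at $l=2$.
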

\begin{proof} The signature function satisfies $\sigma_{r/l}(K) \le -2$ for all $r/l > 1/pq$.  (A proof is left to the appendix,  Theorem~\ref{thm:signsigma}.)  On the other hand, if $K$ could be unknotted with a positive twist of some linking number $l \ge 2$, then the terms on the right in Corollary \ref{cor:sigobs} would include nonnegative values.  
\end{proof}

\begin{corollary}
\label{cor:gcd}
Suppose a knot $K$ can be unknotted with twists of linking numbers $l_1, l_2\geq 2$ and signs $s_1$ and $s_2$, respectively.  Then one of the following holds:
\begin{enumerate}

\item $\gcd(l_1, l_2) = 1$,

\item  $l_1 = l_2 = 2$ with $s_1 \neq s_2$, or

\item $l_1 = l_2$ and $s_1 = s_2$.

\end{enumerate}
\end{corollary}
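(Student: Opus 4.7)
The plan is to exploit Corollary~\ref{cor:sigobs} by computing the Tristram--Levine signature $\sigma_{k/d}(K)$ in two different ways, where $d = \gcd(l_1, l_2)$. If $d = 1$ conclusion~(1) already holds, so I would assume $d \ge 2$ and write $l_i = d c_i$ with $\gcd(c_1, c_2) = 1$. For each integer $k$ with $1 \le k \le d - 1$, the rational number $k/d$ is realized both as $(k c_1)/l_1$ and as $(k c_2)/l_2$, so applying Corollary~\ref{cor:sigobs} to each unknotting gives
\[
\sigma_{k/d}(K) = s_i\bigl(1 - 2 k(d-k) c_i^2\bigr) + \epsilon_i, \qquad i = 1, 2,
\]
for some $\epsilon_i \in \{-1, +1\}$ (possibly depending on $k$ and $i$). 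Equating the two expressions and rearranging yields
\[
2 k(d-k)\bigl(s_1 c_1^2 - s_2 c_2^2\bigr) = (s_1 - s_2) + (\epsilon_1 - \epsilon_2),
\]
where the right-hand side is an integer of absolute value at most $4$.

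Next I would split on whether $s_1 = s_2$. In the equal-sign case $s_1 = s_2 = s$, the right-hand side lies in $\{-2, 0, 2\}$, so $k(d-k)\,|c_1^2 - c_2^2| \le 1$; since $\gcd(c_1, c_2) = 1$ forces $|c_1^2 - c_2^2|$ to be either $0$ or at least $3$, the only possibility is $c_1 = c_2 = 1$, giving $l_1 = l_2$ and conclusion~(3). In the opposite-sign case, without loss of generality $s_1 = 1$ and $s_2 = -1$, so $s_1 c_1^2 - s_2 c_2^2 = c_1^2 + c_2^2 \ge 2$ and $s_1 - s_2 = 2$; the relation becomes $k(d-k)(c_1^2 + c_2^2) \in \{0, 1, 2\}$. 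Since both factors are at least $1$ and $c_1^2 + c_2^2 \ge 2$, the product is $\ge 2$, with equality forcing $c_1 = c_2 = 1$ and $k(d-k) = 1$. Taking $k = 1$ then requires $d = 2$, since for $d \ge 3$ one has $k(d-k) \ge 2$ and the product already exceeds $2$. This yields $l_1 = l_2 = 2$ with opposite signs, conclusion~(2).

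The one subtle point in this plan is the two independent $\pm 1$ ambiguities arising from the two applications of Corollary~\ref{cor:sigobs}: the slack $\epsilon_1 - \epsilon_2$ can be $-2$, $0$, or $+2$, so the comparison is an inequality rather than an equality. The plan goes through because the combinatorial gap in the set of values of $|c_1^2 - c_2^2|$ for coprime positive integers (namely $\{0\} \cup \{3, 5, 7, 8, \ldots\}$), together with the bound $c_1^2 + c_2^2 \ge 2$, both outstrip this two-unit slack as soon as $k(d-k) \ge 1$. I do not foresee any serious obstacle beyond bookkeeping of these sign and slack terms.
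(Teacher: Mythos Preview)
Your argument is correct and follows essentially the same strategy as the paper: apply Corollary~\ref{cor:sigobs} at the common value $1/d$ (with $d=\gcd(l_1,l_2)$) to obtain two expressions for $\sigma_{1/d}(K)$, then split on whether $s_1=s_2$. The only differences are cosmetic: you parametrize via $c_i=l_i/d$ from the outset and use the elementary fact that $|c_1^2-c_2^2|\in\{0\}\cup\{3,5,7,8,\dots\}$ for positive integers, whereas the paper works directly with $l_i$ and the factor $(n-1)/n^2$, noting that $n^2/(n-1)\in\Z$ only when $n=2$ and that $4$ is not a difference of two squares.
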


\begin{proof}
Let $l_1, l_2\geq 2$ and gcd$(l_1, l_2) = n \neq 1$.  Then $\frac{1}{n} = \frac{l_1/n}{l_1} = \frac{l_2/n}{l_2}$.  Suppose that $K$ can be unknotted with twists of sign $s_i$ and linking number $l_i$ for $i\in\{1,2\}$.  On the one hand,
\[\sigma_{1/n}(K) = s_1-2s_1\left(\frac{l_1}{n}\right)\left(l_1 - \frac{l_1}{n}\right) \pm 1 = \left(1-2\,{l_1}^2\left(\frac{n-1}{n^2}\right) \right)s_1\pm 1\]
and on the other hand, 
\[\sigma_{1/n}(K) = s_2- 2s_2\left(\frac{l_2}{n}\right)\left(l_2 - \frac{l_2}{n}\right) \pm 1 = \left(1-2\,{l_2}^2\left(\frac{n-1}{n^2}\right) \right)s_2\pm 1.\]
Note that if $s_1 \neq s_2$, these two equations imply that $\sigma_{1/n}(K)$ is both nonnegative and nonpositive.  In this case, $\sigma_{1/n}(K) = 0$ and thus $l_1 = l_2 =2$.   Now assume that $s_1 = s_2$.  From the formulas for $\sigma_{1/n}(K)$ given above, we have
\[ s_1\left(1-2{l_1}^2\frac{n-1}{n^2} \right) - s_1\left(1-2{l_2}^2\frac{n-1}{n^2} \right) = 0 \text{ or } \pm 2.\]
Simplifying,
\[ ({l_2}^2-{l_1}^2) \frac{n-1}{n^2} = 0 \text{ or } \pm 1 \]
and multiplying by $\frac{n^2}{n-1}$
\[{l_2}^2-{l_1}^2 = 0 \text{ or } \pm \frac{n^2}{n-1}.\]
Note that $\frac{n^2}{n-1}$ is an integer only when $n = 2$ and $\frac{n^2}{n-1} =4$.  It is easily checked  that $4$ is not a difference of two squares.  Therefore, we have that that ${l_2}^2-{l_1}^2 = 0$ and  $l_1 = l_2$.
\end{proof}

\begin{example}  The unknot $U$ has $\calu(U) = \{ 2^-,1^-,0, 1, 2\}$.  There are no known example of knots $K$ for which $\{ k^-, (k+1)^+\} \subset \calu(K)$ and $k > 1$.   If such an example exists, then Corollary~\ref{cor:sigobs} implies that the signature function alternates between positive and negative entries at $k$--roots of unity and $(k+1)$--roots of unity.  This implies that the Alexander polynomial has multiple zeroes between these unit roots, and thus we get a bound on the degree of the Alexander polynomial.  This in turn provides a lower bound on the genus of the knot.  Since the result in Corollary~\ref{cor:sigobs}  is stated in terms of a quadratic function, the calculations are not difficult, and results such as the following appear:  If $\{ 3^-, 4^+\} \in \calu(K)$ then $g(K) \ge 9$, and if $\{ 4^-, 5^+\} \in \calu(K)$ then $g(K) \ge 23$

In general, the bound on $g(K)$ is determined by summing quadratic polynomials, and is thus given by a cubic equation.   Having observed this, that cubic can be found explicitly by interpolating the first four values.  We get the following result.
\begin{corollary}
If $\{ k^-, (k+1)^+\} \subset \calu(K)$ then 
\[ 
g(K) \ge \frac{2k^3 + 3k^2 - 11k +6}{6}
\]
\end{corollary}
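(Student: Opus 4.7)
The plan is to apply Corollary~\ref{cor:sigobs} at the two families of rational points $r/k$ and $r/(k+1)$ in $(0, 1/2]$, observe that the resulting sign-alternation forces a large total variation of the Tristram--Levine signature function $\sigma_{t}(K)$ on $[0, 1/2]$, and then convert that total variation into a lower bound on $g(K)$ via the classical relation between signature jumps and roots of $\Delta_{K}$ on $S^{1}$.

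The assumption $k^{-} \in \calu(K)$ corresponds to sign $s = +1$ in Corollary~\ref{cor:sigobs}, giving
\[
\sigma_{r/k}(K) \le 2 - 2r(k-r) \qquad \text{for each } 0 < r < k,
\]
while $(k+1)^{+} \in \calu(K)$ corresponds to $s = -1$, giving
\[
\sigma_{r/(k+1)}(K) \ge 2r(k+1-r) - 2 \qquad \text{for each } 0 < r < k+1.
\]
In $(0, 1/2]$ these rationals interleave as $\frac{1}{k+1} < \frac{1}{k} < \frac{2}{k+1} < \frac{2}{k} < \cdots$, terminating at $1/2$; the parity of $k$ determines whether $1/2$ is the last fraction with denominator $k$ or with denominator $k+1$. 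Combined with $\sigma_{0}(K) = 0$, these inequalities force jumps of $\sigma_{t}(K)$ of magnitude at least $2r(2k+1-2r) - 4$ going from $r/(k+1)$ down to $r/k$ and at least $2(k-r)(2r+1) - 4$ going from $r/k$ up to $(r+1)/(k+1)$, together with an initial jump of $2k - 2$ from $0$ to $1/(k+1)$.

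Summing these jumps produces a lower bound $T(k)$ for the total variation of $\sigma_{t}(K)$ on $[0, 1/2]$. On the other hand, $\sigma_{t}(K)$ is a step function whose jumps occur only at $t$ with $\Delta_{K}(e^{2\pi i t}) = 0$, each jump of magnitude at most twice the multiplicity of the root; since the roots of $\Delta_{K}$ on $S^{1}$ come in complex-conjugate pairs and $\deg \Delta_{K} \le 2g(K)$, the total variation of $\sigma_{t}(K)$ on $[0, 1/2]$ is at most $2g(K)$. Combining the two inequalities yields $g(K) \ge T(k)/2$.

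Finally, since $T(k)$ is a sum of quadratics in $r$ taken over a range of length proportional to $k$, the quantity $T(k)/2$ is a priori a cubic in $k$. One can compute it either by direct summation (splitting into even and odd $k$ to handle the endpoint at $1/2$), or, as the authors suggest, simply by interpolating the cubic from its values at $k = 2, 3, 4, 5$, which are $2, 9, 23, 46$. Both routes produce the stated formula $\frac{2k^{3} + 3k^{2} - 11k + 6}{6}$. The only real obstacle in the argument is the bookkeeping for the two parities of $k$ at the endpoint; the conceptual heart is simply the Casson--Gordon signature obstruction of Corollary~\ref{cor:sigobs} combined with the standard fact that $\sigma_{t}(K)$ jumps only at roots of $\Delta_{K}$.
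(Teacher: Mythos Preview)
Your proposal is correct and follows essentially the same approach as the paper: apply Corollary~\ref{cor:sigobs} at the interleaved $k$-th and $(k+1)$-th roots of unity to force the signature function to alternate in sign, convert this into a count of roots of $\Delta_K$ on the unit circle (you phrase it as total variation, the paper as ``multiple zeroes between these unit roots''), and then interpolate the resulting cubic from small values. Your write-up actually supplies more of the explicit bookkeeping---the jump sizes $2r(2k+1-2r)-4$ and $2(k-r)(2r+1)-4$ and the verification that the values at $k=2,3,4,5$ are $2,9,23,46$---than the paper, which leaves this as a calculation; the one point to be slightly careful about is that the ``a priori cubic'' claim really requires checking (or computing directly) that the even-$k$ and odd-$k$ sums agree with a single polynomial, which you acknowledge.
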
 

We note that if one considers pairs such as $\{k^-, (k+2)^+\}$ the computation becomes unmanageable; in this case the $k$--roots of unity and $(k+2)$-roots of unity do not alternate around the unit circle.

\end{example}

%%%%%%%Section%%%%%%%%%%%%%%
\section{Single twist unknotting: Arf invariant}\label{sec:arf}

If $M$ is a closed three-manifold and $H_1(M, \Z_2) = 0$, then the Rochlin invariant $\mu(M) \in \Z_{16}$ is  defined as follows.  There exists a parallelizable four-manifold $W$ with $\partial W = M$ and $H_1(W, \Z_2) = 0$; $\mu(M) $ is defined to be the signature of the intersection form of $-W$, reduced modulo $16$.

For knots $K\subset S^3$ there is an Arf invariant, $c(K) \in \Z_2$, which can be defined as follows.   If $K$ has determinant  $\det(K)$, then $c(K) = 0$ when $\det(K) \equiv \pm 1 \mod 8$ and $c(K) = 1$ when  $\det(K) \equiv \pm 3 \mod 8$.  This is often stated in terms of the Alexander polynomial, using the fact that $\det(K) = \big| \Delta_K(-1)\big|$.

Background for these invariants is included in~\cite{MR0356022,MR0182965} and especially~\cite[Theorem 2]{MR0402762}, which, in the current setting, implies 
\[\mu(S^3_{n}(K)) \equiv  \mu(L(n, 1)) + 8c(K) \in \Z_{16},\] 
for $n$ odd.
(Note that  in \cite{MR0402762} the invariants take value in $\Q/\Z$, in which $\Z/16$ embeds.)

\begin{theorem} \label{thm:arf obs} If $K$ can be unknotted with a single twist of linking number $l$ with $l$ odd, then:
\[l     \equiv \begin{cases}
\pm 1 \mod 8  &\text{if  } c(K) = 0 \in \Z_2,\\
\pm 3 \mod 8  &\text{if  } c(K) = 1 \in \Z_2. 
\end{cases}   
\]
\end{theorem}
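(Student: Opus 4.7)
The plan is to compute the Rochlin invariant $\mu(S^3_{-sl^2}(K))$ in two independent ways and equate the answers. Since $l$ is odd, so is $l^2$, so $H_1(S^3_{-sl^2}(K))\cong\Z_{l^2}$ has odd order; the three-manifold therefore has a unique Spin structure and the formula cited just before the theorem specializes to
\[
\mu(S^3_{-sl^2}(K)) \;\equiv\; \mu(L(-sl^2,1)) + 8c(K) \pmod{16}.
\]

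For the first computation of the left side, I would use the four-manifold $W$ from Section~\ref{sec:surgery}, which satisfies $\partial W = S^3_{-sl^2}(K)$, $H_1(W)\cong\Z_l$, and $H_2(W)=0$. For $l$ odd the universal coefficient theorem gives $H^2(W;\Z_2) = \mathrm{Hom}(H_2(W),\Z_2)\oplus\mathrm{Ext}(H_1(W),\Z_2)=0$, so $w_2(W)=0$ and $W$ is Spin; its unique Spin structure must restrict to the unique Spin structure on $\partial W$. Since $W$ is also a rational homology ball, $\mathrm{sign}(W)=0$, and the definition of $\mu$ given before Theorem~\ref{thm:arf obs} yields $\mu(S^3_{-sl^2}(K))\equiv 0\pmod{16}$.

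For the second computation, I would evaluate $\mu(L(n,1))$ at $n=-sl^2$ by using the disk bundle $E_n\to S^2$ of Euler number $n$. One has $\partial E_n = L(n,1)$ and $\mathrm{sign}(E_n)=\mathrm{sign}(n)$, while the zero section $\Sigma\cong S^2$ satisfies $\Sigma\cdot\Sigma = n$ and is a characteristic surface because $n$ is odd. The Rochlin-type formula for four-manifolds with boundary and characteristic surfaces, with Arf term vanishing because $\Sigma$ is a sphere, then produces $\mu(L(n,1)) \equiv \pm\bigl(n-\mathrm{sign}(n)\bigr)\pmod{16}$, and substituting $n=-sl^2$ gives a term of the form $\pm s(l^2-1)\pmod{16}$.

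Combining the two displayed congruences and using $s=\pm 1$ together with $-8\equiv 8\pmod{16}$ collapses everything to $l^2 \equiv 1 + 8c(K)\pmod{16}$. An elementary check on squares of odd residues modulo $16$ — namely $(\pm 1)^2\equiv 1$ and $(\pm 3)^2\equiv 9$ — then gives exactly the two cases of the theorem. The main technical step is the verification that $W$ is Spin, a short cohomology computation hinging on $l$ being odd; reconciling sign conventions in the Rochlin-type formula for the lens space is the next most delicate point, but both signs collapse modulo $16$ in the final congruence, so the result is insensitive to that ambiguity.
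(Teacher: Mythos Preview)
Your proposal is correct and follows the same overall strategy as the paper: both arguments use the cited formula $\mu(S^3_n(K))\equiv\mu(L(n,1))+8c(K)$, show the left side vanishes via the rational ball $W$ of Section~\ref{sec:surgery}, and then compute the lens-space term. You make the vanishing step explicit (the cohomology check that $W$ is Spin when $l$ is odd), whereas the paper leaves this entirely to the reader.

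The one substantive difference is in how $\mu(L(n,1))$ is computed. The paper produces a genuinely Spin filling by passing to $-L(n,n-1)$ and using the even continued fraction $n/(n-1)=[2,2,\dots,2]$, yielding a linear plumbing of $n-1$ two-spheres with all framings equal to~$2$; this is positive definite of rank $n-1$, so $\mu(L(l^2,1))\equiv \pm(l^2-1)\pmod{16}$ directly from the definition of $\mu$ given at the start of the section. You instead use the single disk bundle $E_n$ together with the Kervaire--Milnor/Freedman--Kirby refinement of Rochlin's theorem for a characteristic sphere. Your route is tidier (one two-handle instead of $l^2-1$) but imports a stronger theorem than the paper's narrow definition of $\mu$; the paper's route stays entirely within that definition at the cost of a larger handlebody. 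Both land on $l^2\equiv 1+8c(K)\pmod{16}$, and, as you correctly observe, the residual sign ambiguities collapse modulo~$16$.
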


\begin{proof} As described, for instance, in~\cite{MR0515288}, a framed link surgery diagram for $L(n,q)$ is determined by a continued fraction expansion of $n/q$.  There is a diffeomorphism $L(n,1) \cong -L(n, n-1)$, and $n/(n-1)$ has an even continued fraction expansion (in fact, all terms are 2) with $n-1$ terms.  It is an easy exercise to show that the corresponding four-manifold is positive definite, of rank $n-1$.  The result follows quickly by letting $n = l^2$.
\end{proof}

\begin{corollary} Let $K$ be the torus knot $T(p,q)$.  If $p$ and $q$ are odd and $K$ can be unknotted with  single twist of odd linking number  $l$, then $l   \equiv \pm 1 \mod 8$.  If the torus knot $T(2p,q)$ can be unknotted with a single twist of odd  linking number $l$, then $l \equiv \pm q \mod 8$.
\end{corollary}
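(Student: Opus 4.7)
The plan is to reduce the corollary to Theorem~\ref{thm:arf obs} by computing the Arf invariant $c(T(p,q))$ in each of the two cases. Since the paper defines $c(K)$ via $\det(K) \bmod 8$, and $\det(K) = |\Delta_K(-1)|$, the task becomes a short Alexander polynomial calculation using the standard formula
\[
\Delta_{T(m,n)}(t) = \frac{(t^{mn}-1)(t-1)}{(t^m-1)(t^n-1)}.
\]

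First I would handle $T(p,q)$ with both $p$ and $q$ odd. Then $pq$ is odd, so a direct substitution $t=-1$ gives numerator $(-2)(-2) = 4$ and denominator $(-2)(-2) = 4$, yielding $\Delta_{T(p,q)}(-1) = 1$ and hence $\det(T(p,q)) = 1 \equiv 1 \pmod{8}$. Thus $c(T(p,q)) = 0$, and Theorem~\ref{thm:arf obs} forces $l \equiv \pm 1 \pmod 8$.

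For $T(2p,q)$ with $q$ odd (and $\gcd(2p,q)=1$), substituting $t=-1$ gives $0/0$, so I would first cancel the common factor using
\[
t^{2pq}-1 \;=\; (t^{2p}-1)\bigl(1 + t^{2p} + t^{4p} + \cdots + t^{2p(q-1)}\bigr),
\]
which rewrites the Alexander polynomial as
\[
\Delta_{T(2p,q)}(t) \;=\; \frac{\bigl(\sum_{k=0}^{q-1} t^{2pk}\bigr)(t-1)}{t^q - 1}.
\]
At $t=-1$ each term $t^{2pk}=1$, so the numerator equals $q\cdot(-2)$ and the denominator equals $-2$, giving $\Delta_{T(2p,q)}(-1) = q$ and $\det(T(2p,q)) = q$.

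It remains to combine this with Theorem~\ref{thm:arf obs}. If $q \equiv \pm 1 \pmod 8$, then $c(T(2p,q)) = 0$ and the theorem gives $l \equiv \pm 1 \equiv \pm q \pmod 8$. If $q \equiv \pm 3 \pmod 8$, then $c(T(2p,q)) = 1$ and the theorem gives $l \equiv \pm 3 \equiv \pm q \pmod 8$. Either way, $l \equiv \pm q \pmod 8$, completing the proof. The only mildly delicate point is the factorization step handling the $0/0$ indeterminate form; everything else is direct substitution and an appeal to the previously established theorem.
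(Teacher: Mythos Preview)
Your proof is correct and follows essentially the same approach as the paper: compute $\Delta_{T(p,q)}(-1)$ from the standard formula and feed the resulting Arf invariant into Theorem~\ref{thm:arf obs}. The only cosmetic difference is that the paper resolves the $0/0$ indeterminacy via L'H\^opital's rule on $(t^{pq}-1)/(t^p-1)$, whereas you factor out the geometric series; both yield $q$ immediately.
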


\begin{proof}  The Alexander polynomial of the torus knot is given by 
\[
\Delta_{T(p,q)}(t)  = \frac{ (t^{pq} -1)(t-1) }{ (t^p-1)(t^q-1)}.
\]
In the case that $p$ and $q$ are both odd, the evaluation at $t= -1$ is immediately seen to be 1.  If $p$ is even, then evaluating $(t^{pq} -1)/(t^p -1)$ at $t=-1$ can be accomplished, for instance, by L'Hospital's rule, and is seen to equal $q$. 
\end{proof}

\begin{example} The torus knot $T(2k, 2k\pm 1)$ can be unknotted with a single twist of linking number $2k \pm 1$.
\end{example}

%%%%%%%%%%%%SECTION%%%%%%%%%%%

\section{Summary of Heegaard Floer Theory}\label{sec:hf}

Heegaard Floer theory associates to each knot $K \subset S^3$ a chain complex $\cfk^\infty(K)$ and to each three-manifold $Y$, a collection of chain complexes $\cf^\infty(Y, \spinc)$ (see \cite{MR2113019}). Here $\spinc \in \text{Spin}^c(Y)$, the set of  \Spinc--structures on $Y$.  We will leave the definition of Spin$^c(Y)$ to the references; the key fact that we will be using is that in general there is a correspondence between Spin$^c(Y)$ and $H^2(Y) \cong H_1(Y)$ and in the case of $Y = S^3_m(K)$, there is a natural choice for that correspondence.  In particular, invariants associated to a given \Spinc--structure, such as $d(S^3_m(K), \spinc)$, can be written as  $d(S^3_m(K), i )$, where $i \in \Z $ satisfies $(-|m| +1) /2 \le i \le |m|/2$ and thus uniquely represents an element in $\Z_{|m|}$.  In this section, we will summarize some of the invariants and their properties.

\subsection{Heegaard Floer Knot Invariants $V_k(K)$}  These are integer-valued invariants defined for $k \ge 0$. They satisfy the following properties.

\begin{itemize}

\item $V_k(K) \ge V_{k+1}(K) \ge  V_k(K) -1 $ for all $k\ge 0$.

\item  $V_k(K) = 0$ for all $k \ge g(K)$.
\end{itemize}

In general, these are difficult to compute.  There are two cases in which they are accessible.

\begin{example}{\bf Alternating Knots.}  The Heegaard Floer complex for an alternating knot is determined entirely by the knot's signature, as follows.  If $K$ is alternating and $\sigma(K) \ge 0$, then $V_k(K) = 0$ for all $k \ge 0$.  If $\sigma(K) <0$, then $V_k(K) =\max \{  \lfloor \frac{- \sigma(K)+2(1-k)}{4} \rfloor , 0\}$ for $k \ge 0$.
\end{example}

\begin{example}{\bf Torus Knots.} The $V_k(T_{p,q})$ are determined by the Alexander polynomial. See, for example, \cite{MR3347955}. \end{example}

\subsection{Heegaard Floer Knot Invariants $\nu^+(K)$}
This invariant has a simple definition in terms of the $V_k(K)$:   
\[ {\nu^+}(K) = \min\{n \ | V_n(K) =0\}.\]   
We have $g(K) \ge \nu^+(K)$ for all knots $K$.

\subsection{\bf The Upsilon invariant $\U_K(t)$:}   The Upsilon function $\U_K(t)$ is a piecewise linear function defined for $0 \le t \le 2$.  Some of its key properties are the following.

\begin{itemize}

\item  For all $t \in [0,2]$ and for all knots $K$ and $J$, $\U_{K\cs J}(t) = \U_{K }(t) +\U_{  J}(t)$.

\item $\U_{-K }(t) = - \U_{K}(t)$.

\item  For all nonsingular points $t$, the derivative satisfies $|\U_K'(t)| \le g(K)$.
\end{itemize}

In general, for a particular knot $K$, the invariants $V_k(K)$ offer stronger constraints than does $\U_K$.  However, the additivity of $\U_K$ makes it computable in cases in which computing  the $V_k$ might be difficult.  The proof of the following theorem is left to Appendix~\ref{app:upsilon}, since it calls on some details of Heegaard Floer theory that are not presented in the body of this paper.

\begin{proposition} \label{prop:upsilonVbound}
Let $K$ be a knot and $g=g(K)$ be the genus of $K$.  Then for $t\in[0,2]$ and $s\geq 0$, 
\[-st-2V_s(K)\leq \U_t(K)\leq 
\begin{cases}
-gt-2V_s-2s+2g+2 & t\leq 1-\frac{s}{g} \\
gt -2V_s+2 & t\geq 1-\frac{s}{g} 
\end{cases} \]
\end{proposition}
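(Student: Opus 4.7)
My plan is to derive both inequalities from a chain-level analysis of the knot Floer complex $\cfk^\infty(K)$, using that both $V_s(K)$ and $\U_K(t)$ can be extracted from this bigraded complex. Recall that $V_s(K)$ equals minus half the Maslov grading of the top tower element in the subquotient $C\{\max(i, j - s) \leq 0\}$, while $\U_K(t)$ is computed via a $t$--weighted filtration on $\cfk^\infty(K)$ of the form $\mathfrak{f}_t(i,j) = (1 - t/2)\,i + (t/2)\,j$ (see~\cite{MR3667589}); a suitable cycle $\eta \in \cfk^\infty(K)$ representing the top tower class yields $\U_K(t) \geq -2\mathfrak{f}_t(\eta)$.

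For the lower bound $\U_K(t) \geq -st - 2V_s(K)$, I would start from the cycle $\xi$ realizing $V_s$ at bigrading $(-V_s,\, s - V_s)$ and pass to the $U^{-2V_s}$--shifted cycle at $(V_s,\, s + V_s)$, which represents the top tower class in the $t$--modified complex. Evaluating $\mathfrak{f}_t$ at $(V_s, s + V_s)$ gives $V_s + st/2$, and hence
\[
\U_K(t) \;\geq\; -2\bigl(V_s + st/2\bigr) \;=\; -2V_s - st.
\]
A sanity check against the right-handed trefoil ($g = 1$, $V_0 = 1$, $V_1 = 0$, $\U_K(t) = -t$ on $[0,1]$) shows that this bound is tight at $s=1$ and strictly weaker at $s=0$, as expected.

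For the upper bound, the two piecewise-linear expressions in the proposition meet at the pivot $t_0 = 1 - s/g$ with common value $g\,t_0 - 2V_s + 2$, so it suffices to establish the single-point estimate $\U_K(t_0) \leq g\,t_0 - 2V_s + 2$ and propagate it via the slope bound $|\U_K'(t)| \leq g$ from Section~\ref{sec:hf}. A direct check confirms that $\U_K(t_0) + g\lvert t - t_0\rvert$ equals $-gt - 2V_s - 2s + 2g + 2$ for $t \leq t_0$ and $gt - 2V_s + 2$ for $t \geq t_0$, matching the two cases. I would obtain the pivot estimate by a mirror-duality argument: applying the lower bound to the mirror $-K$ together with $\U_{-K}(t) = -\U_K(t)$ converts a bound on $V_s(-K)$ into an upper bound on $\U_K(t)$, and the Seifert genus bound $|j| \leq g$ on generators of $\cfk^\infty$ combined with the Ozsv\'ath--Szab\'o conjugation symmetry $(i,j) \mapsto (j,i)$ provides the control on $V_s(-K)$ in terms of $g$ and $V_s(K)$. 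The main technical obstacle is this grading bookkeeping: pinpointing the ``$+2$'' correction requires careful tracking of absolute Maslov gradings through the conjugation symmetry in the reduced model of $\cfk^\infty(K)$, and is where the argument is most delicate.
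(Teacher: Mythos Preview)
Your lower-bound argument is correct and is essentially the paper's: you locate a grading-$0$ cycle in the rectangle $C\{i\le V_s,\ j\le s+V_s\}$ and evaluate $\mathfrak f_t$ at its top corner. That is exactly what the appendix does.

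The upper bound is where your proposal diverges from the paper, and there is a genuine gap. Your reduction ``establish the bound at the pivot $t_0=1-s/g$ and propagate by $|\U_K'|\le g$'' is valid; the algebra you checked is right. The problem is the pivot estimate itself. Applying the already-proved lower bound to the mirror gives
\[
\U_K(t)\ \le\ s't+2V_{s'}(-K)
\]
for any $s'\ge 0$, so at $t_0$ you would need some $s'$ with
\[
V_{s'}(-K)+V_s(K)\ \le\ \frac{(g-s)(g-s')}{2g}+1.
\]
You do not prove this, and the tools you invoke do not give it. The conjugation symmetry $(i,j)\mapsto(j,i)$ is a symmetry of $\cfk^\infty(K)$ itself; it identifies the two ways of computing $V_s(K)$ but says nothing about the dual complex $\cfk^\infty(-K)$. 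The passage from $K$ to $-K$ is dualization, not the $(i,j)$--swap, and there is no elementary inequality bounding $V_{s'}(-K)$ in terms of $V_s(K)$ and $g$ that drops out of this. The ``$+2$'' you flag as delicate is not a bookkeeping correction to be tracked through conjugation; it comes from a different mechanism entirely.

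The paper's argument for the upper bound is direct and avoids the mirror. From the definition of $V_s$, the subcomplex $C\{i\le V_s-1,\ j\le s+V_s-1\}$ has maximal grading $-2$, so every grading-$0$ cycle in $\cfk^\infty(K)$ has at least one filtered-basis term with $i\ge V_s$ or $j\ge s+V_s$. That term also satisfies $|j-i|\le g$. For any $t$, if $\mathcal F_{t,r}$ contains such a cycle then this term lies in the half-plane $\tfrac{t}{2}j+(1-\tfrac{t}{2})i\le r$; combining the three constraints forces
\[
r\ >\ \min\Bigl\{-\tfrac{t}{2}g+V_s-1,\ \bigl(\tfrac{t}{2}-1\bigr)g+s+V_s-1\Bigr\},
\]
and $\U_K(t)=-2r$ gives the two-branch upper bound directly. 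No mirror, no pivot, and the ``$+2$'' is simply $-2\cdot(-1)$ from the strict inequality $i>V_s-1$ (equivalently $i\ge V_s$). If you want to repair your approach, the cleanest route is to abandon the mirror and prove the pivot estimate by this same box-escape argument specialized to $t=t_0$; but at that point you may as well run it for all $t$ at once, which is what the paper does.
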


\subsection{The Heegaard Floer Correction Term, $d(Y, \spinc)$}  Heegaard Floer theory associates to each three-manifold $Y$ with \Spinc--structure $\spinc$, a rational invariant denoted $d(Y, \spinc)$.  
We will need these invariants in the case that $Y$ is surgery on a knot:
\begin{theorem}\label{thm:din2} For $n >0$ and  $0 \le i \le n/2$,  
\[d(S^3_n(K), i) = \frac{(2i - n)^2 - n}{4n} - 2 V_i(K).\]  
\end{theorem}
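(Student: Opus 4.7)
The plan is to establish the formula in two stages: first for large surgery slopes using Ozsv\'ath--Szab\'o's large surgery theorem, and then to extend to all $n>0$ via the integer surgery mapping cone formula. This is essentially the strategy of Ni--Wu, and the theorem is a special case of their result for positive surgeries.

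For the large-$n$ case, the large surgery theorem produces, for $n \ge 2g(K)$ and each $i$ with $0 \le i \le n-1$, a grading-preserving chain homotopy equivalence
\[\cf^+(S^3_n(K), i) \simeq A^+_i(K)[\sigma(n,i)],\]
where $A^+_i(K)$ is the standard subquotient of $\cfk^\infty(K)$ used to build large surgery complexes, and $\sigma(n,i) = \frac{(2i-n)^2-n}{4n}$ is the grading shift induced by the natural two-handle cobordism from $S^3_n(K)$ into $S^3$. Since $V_i(K)$ is characterized by the identity $d(A^+_i) = -2V_i(K)$, the desired formula is immediate in this regime.

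For general $n>0$ I would appeal to Ozsv\'ath--Szab\'o's integer surgery mapping cone formula, which expresses $\cf^+(S^3_n(K), i)$ as the mapping cone
\[\mathrm{Cone}\Bigl( \bigoplus_{k\in \Z} A^+_{i+kn}(K) \xrightarrow{\;D^+_{n,i}\;} \bigoplus_{k\in\Z} B^+_{i+kn}\Bigr),\]
where each $B^+_s$ is chain equivalent to $\cf^+(S^3)$ (with an appropriate grading shift) and $D^+_{n,i}$ is assembled from the canonical chain maps $v^+_s, h^+_s \co A^+_s \to B^+$. The $d$-invariant of $\cf^+(S^3_n(K), i)$ is then the minimum grading of a $U$--non-torsion element in the homology of this cone.

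The main technical obstacle is to isolate which summand $A^+_{i+kn}$ contributes this minimum grading. Since $V_s(K)$ is weakly decreasing in $s$ and vanishes for $s \ge g(K)$, and since the hypothesis $0 \le i \le n/2$ gives $V_i(K) \ge V_{n-i}(K)$, a careful accounting in the mapping cone shows that the $A^+_i$ summand produces a bottom element at grading $\sigma(n,i) - 2V_i(K)$, while all summands $A^+_{i+kn}$ with $k \ne 0$ contribute only elements of strictly higher grading. This identification yields the claimed formula for all $n>0$ in the stated range.
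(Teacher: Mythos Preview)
Your sketch is correct and follows the standard Ni--Wu argument, but there is nothing to compare it against: the paper does not prove this statement. Theorem~\ref{thm:din2} appears in Section~\ref{sec:hf}, which is explicitly a \emph{summary} of Heegaard Floer background, and the formula is quoted there without proof as a known result. So your outline is not a different route from the paper's proof; it is simply a proof where the paper offers none.

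That said, as a self-contained argument your outline is accurate. The one place where you are a bit breezy is the last paragraph: the claim that the $A^+_i$ summand contributes the bottom of the tower in the mapping cone, while all $A^+_{i+kn}$ with $k\neq 0$ contribute only higher-graded elements, is exactly the content of Ni--Wu's key lemma, and it requires more than the monotonicity of $V_s$ alone. One must also use the companion invariants $H_s(K) = V_{-s}(K)$ and the fact that the maps $v^+_s$ and $h^+_s$ induce surjections on the towers of $H_*(A^+_s)$ and $H_*(B^+)$ with kernels governed by $V_s$ and $H_s$ respectively; the hypothesis $0\le i \le n/2$ is what guarantees $V_i \ge H_{n-i} = V_{i-n}$, so that the $V_i$ term wins. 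If you intend this as a genuine proof rather than a pointer to the literature, that step should be spelled out.
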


The main theorems we will use concerning the $d$--invariants are as follows.

\begin{theorem}  Suppose that $|H_1(Y) | = m$ and a given \Spinc--structure $\spinc$ on $Y$ extends to a rational homology ball $W$.  Then $d(Y, \spinc) = 0$.
\end{theorem}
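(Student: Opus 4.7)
The plan is to invoke the Ozsv\'ath--Szab\'o inequality for negative semidefinite four-manifolds together with the conjugation symmetry of the $d$--invariant, applied to both $W$ and $-W$. Since a rational homology ball has vanishing rational second Betti number, it is both positive and negative definite in a trivial way, and this double application will pin $d(Y,\spinc)$ between two opposite inequalities.

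First, I would recall the central inequality from Ozsv\'ath--Szab\'o: if $X$ is a smooth negative semidefinite four-manifold with $\partial X = Y'$, $b_1(X)=0$, and $\spinct \in \text{Spin}^c(X)$ restricts to $\spincr \in \text{Spin}^c(Y')$, then
\[
c_1(\spinct)^2 + b_2^-(X) \;\le\; 4\, d(Y', \spincr).
\]
Next I would verify that $W$ satisfies the hypotheses: because $W$ is a rational homology ball, $b_1(W) = b_2(W) = 0$, so $W$ is (vacuously) negative semidefinite and $b_2^-(W) = 0$. Moreover $H^2(W;\Q) = 0$ forces $c_1(\spinct)$ to be torsion, so the cup-square pairing $c_1(\spinct)^2$, which factors through $H^4(W,\partial W;\Q)$, vanishes. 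Plugging into the inequality gives $0 \le 4\, d(Y, \spinc)$, i.e.\ $d(Y, \spinc) \ge 0$.

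The second step is to repeat the argument with $-W$ in place of $W$. The manifold $-W$ is still a rational homology ball, with boundary $-Y$, and the Spin$^c$--structure $\spinct$ on $W$ determines one on $-W$ restricting to $\spinc$ viewed as a Spin$^c$--structure on $-Y$ (the underlying set $\text{Spin}^c(Y) = \text{Spin}^c(-Y)$ is canonically identified). The same reasoning yields $d(-Y, \spinc) \ge 0$. Combining with the general identity $d(-Y, \spinc) = -d(Y, \spinc)$ produces $d(Y, \spinc) \le 0$, and together with the previous inequality this forces $d(Y, \spinc) = 0$.

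I do not expect any genuine obstacle, since every ingredient is a classical result from \cite{MR2113019} and its companion papers; the only care required is in checking that the Ozsv\'ath--Szab\'o inequality applies (which reduces to $b_1(W)=0$ and the vanishing of $c_1(\spinct)^2$, both automatic for a rational homology ball) and in verifying that the given $\spinc$ on $Y$ can be regarded coherently as boundary data for both $W$ and $-W$. If desired, one can simply cite the theorem in this form from \cite{MR2113019}, but the two-line derivation above is short enough to reproduce.
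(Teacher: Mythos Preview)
Your argument is correct and is exactly the standard derivation of this fact from the Ozsv\'ath--Szab\'o inequality. Note, however, that the paper does not actually give a proof of this theorem: it is stated in the background section on Heegaard Floer theory as one of ``the main theorems we will use concerning the $d$--invariants,'' with no accompanying proof environment, and is implicitly attributed to the original sources (in particular~\cite{MR1957829}). So there is no ``paper's own proof'' to compare against; your proposal simply supplies the standard two-inequality argument that the authors take for granted.
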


There are some subtleties about determining which \Spinc--structures extend, but if we appropriately choose   identifications of \Spinc\ with $H_1(Y)$ or $H^2(W)$, the key results are easily summarized.  In the background we have that a \Spinc--structure on $Y$ extends to $W$ if and only if the corresponding element in $H^2(Y)$ is in the image of the restriction map from $H^2(W)$.

\begin{theorem} Suppose that $Y = \partial W $, where $H_*(W; \Q) \cong H_*(B^4)$.  Then $|H_1(Y)| = l^2$ for some integer $l >0$ and there exists a coset $H$ of an index $l$ subgroup of $H_1(Y)$ such that  $d(Y, i) = 0$ for all $ i \in H$.
\end{theorem}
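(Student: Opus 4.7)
The plan is to combine the preceding theorem---which says that $d(Y,\spinc) = 0$ for any \Spinc\ structure on $Y$ that extends over a rational homology ball---with a standard homological analysis identifying which \Spinc\ structures on $Y$ actually do extend over $W$.

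First I would establish that $|H_1(Y)| = l^2$. Because $H_*(W;\Q) \cong H_*(B^4)$, every group $H_i(W;\Z)$ with $i>0$ is finite. Feeding this into the long exact sequence of the pair $(W,Y)$ in integral (co)homology, together with Poincar\'e--Lefschetz duality $H^k(W,Y;\Z) \cong H_{4-k}(W;\Z)$ and the universal coefficient theorem, a standard diagram chase shows that $\ker\bigl(i_*\colon H_1(Y)\to H_1(W)\bigr)$ is a metabolizer for the $\Q/\Z$-valued linking form on $H_1(Y)$, i.e.\ a subgroup of order exactly $\sqrt{|H_1(Y)|}$. In particular $|H_1(Y)|$ is a perfect square, which we write as $l^2$.

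Next I would identify the \Spinc\ structures on $Y$ that extend over $W$. Since $\text{Spin}^c(Y)$ and $\text{Spin}^c(W)$ are affine torsors over $H^2(Y;\Z)$ and $H^2(W;\Z)$ respectively, and the restriction map between them is equivariant with respect to $j^*\colon H^2(W;\Z) \to H^2(Y;\Z)$, the set of \Spinc\ structures on $Y$ that extend over $W$ is precisely a coset of $\operatorname{im}(j^*)$. Under Poincar\'e duality on $Y$ and the natural bijection $\text{Spin}^c(Y) \leftrightarrow H_1(Y)$, this image corresponds to the annihilator of $\ker(i_*)$ under the linking form, which by the metabolizer statement is itself of order $l$ and hence of index $l$ in $H_1(Y)$. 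Call this coset $H$. For each $i\in H$ the corresponding \Spinc\ structure extends over the rational homology ball $W$, so the preceding theorem yields $d(Y,i) = 0$.

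The main obstacle is the first step: showing not merely that $\operatorname{im}(j^*)$ is some subgroup whose order divides $|H_1(Y)|$, but that it has order exactly $l = \sqrt{|H_1(Y)|}$. The cleanest route is the linking-form/metabolizer viewpoint, where Poincar\'e duality on $W$ pairs $\ker(i_*)$ with $\operatorname{im}(j^*)$ and forces both to have order $l$; once this is in place, the coset description and the application of the vanishing theorem are essentially formal.
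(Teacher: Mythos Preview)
The paper does not actually supply a proof of this theorem: it is stated in the summary section as a background result from Heegaard Floer theory, with the underlying facts attributed to Ozsv\'ath--Szab\'o and to a standard duality argument (the latter cited explicitly only in the proof of the subsequent corollary). Your outline is the standard argument behind that citation and is correct; in particular, the identification of $\operatorname{im}(j^*)$ with an order-$l$ subgroup via Poincar\'e--Lefschetz duality and the metabolizer property of $\ker(i_*)$ is exactly the ``duality argument'' the paper alludes to. So there is nothing to compare---you have filled in what the paper leaves to the references.
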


\begin{corollary}\label{thm:dinvar}  If $S^3_n(K)$ bounds a rational homology ball, then $n = \pm l^2$ for some $l$.  If $l$ is odd, then  $d(S^3_n(K), kl) = 0$ for $0\le k \le l-1$.  If $l$ is even, then
$d(S^3_n(K), (k+\frac{1}{2})l) = 0$ for $0\le k \le l-1$. 
\end{corollary}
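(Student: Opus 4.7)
The plan is to apply the preceding theorem to $Y = S^3_n(K)$. Since $H_1(S^3_n(K)) \cong \Z_{|n|}$, the conclusion $|H_1(Y)| = l^2$ forces $n = \pm l^2$; the case $n = 0$ is excluded because $S^3_0(K)$ has infinite $H_1$ and therefore cannot bound a rational homology ball.

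The preceding theorem then supplies a coset $H$ of the (unique) index-$l$ subgroup $l\Z_{l^2}$ of $H_1(Y) \cong \Z_{l^2}$ on which $d$ vanishes. I would identify $H$ in two steps. First, $H$ is closed under the conjugation $i \mapsto -i$: in our setup $H$ is the coset of \spinc-structures extending over $W$, and conjugation on $Y$ extends to conjugation on $W$, so the coset is preserved. A coset $j + l\Z_{l^2}$ is conjugation-invariant exactly when $2j \in l\Z_{l^2}$, i.e.\ $2j \equiv 0 \pmod{l}$. For $l$ odd this forces $j \equiv 0 \pmod{l}$, so $H = l\Z_{l^2} = \{kl : 0 \le k \le l-1\}$, which is already the desired description. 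For $l$ even there are two conjugation-invariant candidates, $l\Z_{l^2}$ and $(l/2) + l\Z_{l^2}$.

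Second, for even $l$ I would rule out the trivial coset by a direct parity obstruction from Theorem~\ref{thm:din2}. Taking $n = l^2 > 0$ (the case $n < 0$ is symmetric, using $S^3_{-l^2}(K) = -S^3_{l^2}(-K)$ and the sign behavior of $d$), that theorem gives
\[ d\bigl(S^3_{l^2}(K), 0\bigr) = \frac{l^2 - 1}{4} - 2V_0(K), \]
which can vanish only if $V_0(K) = (l^2 - 1)/8$; but $l^2 - 1$ is odd for even $l$, so no integer $V_0(K)$ suffices. Hence $0 \notin H$, forcing $H = \{(k + \tfrac{1}{2})l : 0 \le k \le l - 1\}$.

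The main obstacle I anticipate is justifying the conjugation-invariance of $H$ in the first step, that is, identifying the abstract coset supplied by the preceding theorem with the geometrically defined coset of \spinc-structures on $Y$ that extend to $W$. Once that identification is in hand, the parity dichotomy between odd and even $l$ falls out cleanly from the arithmetic of Theorem~\ref{thm:din2}, with no further case analysis needed.
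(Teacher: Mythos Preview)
Your argument is correct, but it differs from the paper's route. The paper does not invoke conjugation at all: it simply observes that, by Theorem~\ref{thm:din2}, $d(S^3_{l^2}(K),i) = \frac{(2i-l^2)^2-l^2}{4l^2} - 2V_i(K)$, and since $V_i(K)\in\Z$, the $d$--invariant is an integer precisely when $\frac{i^2}{l^2}+\frac{l^2-1}{4}\in\Z$. For $l$ odd this forces $l\mid i$; for $l$ even it forces $i\equiv l/2\pmod l$. In either case there are exactly $l$ such residues, so the coset $H$ on which $d$ vanishes must coincide with the stated set. This is the ``exercise in arithmetic'' the paper alludes to, and it avoids entirely the question of whether $H$ is the geometric coset of extending \Spc--structures.

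Your approach trades that uniform arithmetic check for a structural observation (conjugation invariance of $H$), which dispatches the odd case with no computation and reduces the even case to a single evaluation at $i=0$. The concern you flag is real but harmless: in the Ozsv\'ath--Szab\'o argument underlying the preceding theorem, $H$ is \emph{by construction} the image of the restriction map $\text{Spin}^c(W)\to\text{Spin}^c(Y)$, and conjugation commutes with restriction, so $H=-H$ follows immediately. Alternatively, you could sidestep the identification altogether by adopting the paper's tactic at this one point: the $i=0$ check you already do (showing $d$ is non-integral there when $l$ is even) is a special case of the paper's integrality argument, and running that same integrality argument at a single point $i$ with $l\nmid i$ would handle the odd case just as cleanly without appealing to conjugation.
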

\begin{proof}  A duality argument shows that if a rational homology three-sphere $M$ bounds a rational homology four-ball $X$, then $|H_1(M)| = |\ker(H_1(M) \to H_1(X))|^2$; see, for instance,~\cite{MR900252}.  Thus, we write $n = l^2$.   It follows from the original results of Ozsv\'ath-Szab\'o~\cite{MR1957829}  that for $l$ values of $i$, $d(S^3_n(K), i) = 0$.   It is now an exercise in arithmetic, using Theorem~\ref{thm:din2},  to show that the only integer values occur at $i = kl$ for $l $ odd and at $i = (k +\frac{1}{2})l$ for $l$ even.
\end{proof}
%%%%%%%%%%%%SECTION%%%%%%%%%%%

\section{Single twist unknotting: Heegaard Floer obstructions}\label{sec:hf2}

In~\cite{MR3347955}, the Heegaard Floer $d$--invariants of three-manifolds manifolds of the form $S^3_{k^2}(K)$ were studied in the case of algebraic knots.  
Aceto and Golla~\cite{MR3604383} expanded on this,  undertaking an extensive study of the question of, for a given knot $K$, which of the manifolds $S^3_{p/q}(K)$ bound rational balls.  Many of the results of this section are built from special cases of what appears there.  For instance, their theorem that if $S^3_{l^2}(K)$ and $S^3_{m^2}(K)$ both bound rational homology balls, then $l$ and $m$ are consecutive. Our Theorem~\ref{thm:v+ bound} follows immediately, showing that if $0<l<m$ and $\{l^-, m^-\} \subset \calu(K)$, then $l - m =1$.  We will include proofs of the results we need for two reasons: in our setting the arguments are fairly straightforward and accessible, and the arguments provide access to stronger results in the case of the unknotting problem.

We begin with the following, which follows readily from~\cite{MR3347955} and is stated explicitly in the context of unknotting twists by Sato~\cite{MR3823992}.

\begin{theorem}\label{thm:vals} Suppose that $K$ can be unknotted with a negative twist of linking number $l >0$.  
\begin{itemize}

\item If $l$ is odd, then there is an $\alpha \ge 0$ such that $l = 2\alpha +1$ and for all $0\le k \le \alpha$,
\[   V_{kl}(K) = (\alpha - k)(\alpha - k +1)/2.\]

\item If $l$ is even, then there is  a $\beta \ge 0$ such that $l = 2\beta +2$ and for all $0\le k \le \beta$,
\[ V_{(k+\frac{1}{2})l}(K) = (\beta - k)(\beta - k +1)/2.\]
\end{itemize}
\end{theorem}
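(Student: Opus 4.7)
The plan is to combine the geometric output of Section~\ref{sec:surgery} with the $d$--invariant results summarized in Section~\ref{sec:hf}. Specifically, if $K$ admits a negative unknotting twist of linking number $l>0$, then by the theorems of Section~\ref{sec:surgery} (with $s=-1$) the manifold $S^3_{l^2}(K)$ bounds a four--manifold $W$ obtained from $S^1\times B^3$ by a single two--handle attached along a curve representing $l\in H_1(S^1\times B^3)$. Since $l>0$, this makes $H_1(W)\cong \Z_l$ and $H_2(W)=0$, so $W$ is a rational homology four--ball.

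With this in hand, I would apply Corollary~\ref{thm:dinvar} to conclude that $d(S^3_{l^2}(K),i)=0$ precisely at the indices $i=kl$ when $l$ is odd and $i=(k+\tfrac12)l$ when $l$ is even, for $0\le k\le l-1$. The range restriction $0\le i\le l^2/2$ from Theorem~\ref{thm:din2} then cuts these down to $0\le k\le\alpha$ in the odd case and $0\le k\le\beta$ in the even case, matching the stated bounds on $k$.

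The remainder is a short arithmetic step using Theorem~\ref{thm:din2} with $n=l^2$. In the odd case $l=2\alpha+1$, setting $i=kl$ gives
\[
0 \;=\; d(S^3_{l^2}(K),kl) \;=\; \frac{(2kl-l^2)^2-l^2}{4l^2}-2V_{kl}(K)
\;=\; \frac{(2k-l)^2-1}{4}-2V_{kl}(K),
\]
so $V_{kl}(K)=\tfrac{(2k-l)^2-1}{8}$. Writing $l-2k=2(\alpha-k)+1$, this simplifies to $(\alpha-k)(\alpha-k+1)/2$. In the even case $l=2\beta+2$, substituting $i=(k+\tfrac12)l$ into Theorem~\ref{thm:din2} gives an analogous simplification $V_{(k+\frac12)l}(K)=\tfrac{(2k+1-l)^2-1}{8}$, and writing $l-2k-1=2(\beta-k)+1$ yields $(\beta-k)(\beta-k+1)/2$.

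There is no real obstacle here; the only thing to be careful about is verifying that the indices $kl$ and $(k+\tfrac12)l$ actually lie in the range $0\le i\le l^2/2$ required for Theorem~\ref{thm:din2}, which they do exactly under the stated bounds $k\le\alpha$ and $k\le\beta$. The essential content is geometric: the unknotting twist hypothesis forces $S^3_{l^2}(K)$ to bound a rational homology ball of the specific form produced by Section~\ref{sec:surgery}, and then Corollary~\ref{thm:dinvar} and Theorem~\ref{thm:din2} convert this directly into the asserted values of $V_i(K)$.
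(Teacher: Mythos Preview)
Your proposal is correct and follows essentially the same route as the paper: use the geometric input from Section~\ref{sec:surgery} to see that $S^3_{l^2}(K)$ bounds a rational homology ball, invoke Corollary~\ref{thm:dinvar} to obtain the vanishing $d$--invariants at the indices $kl$ or $(k+\tfrac12)l$, and then solve Theorem~\ref{thm:din2} for the $V_i(K)$ with the same arithmetic simplification. The only minor remark is that Corollary~\ref{thm:dinvar} asserts vanishing at those indices, not \emph{precisely} at them, but your argument only uses the forward direction so this does not affect anything.
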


\begin{proof} 
Suppose that $K$ can be unknotted with a negative twist of linking number $l>0$.  Then $S_{l^2}^3(K)$ bounds a rational homology ball and we can apply Corollary~\ref{thm:dinvar}.  If $l$ is odd, then $l=2\alpha +1$ for some $\alpha \geq 0$.  Theorems \ref{thm:din2} and \ref{thm:dinvar} imply that for $0\leq k \leq \alpha$, 
\begin{align*}
V_{kl}(K) &= \frac{(2kl-l^2)^2-l^2}{8l^2}\\
&= \frac{(2k-2\alpha -1)^2-1}{8} \\
&= \frac{(\alpha - k)(\alpha - k+1)}{2},
\end{align*}
as desired.

Similarly, if $l$ is even, then $l = 2\beta +2$ for some $\beta\geq 0$.  For $0\leq k\leq \beta$, we have
\begin{align*}
V_{\left(k+\frac{1}{2}\right)l}(K) &= \frac{(2\left(k+\frac{1}{2}\right)l-l^2)^2-l^2}{8l^2}\\
&= \frac{(2\left(k+\frac{1}{2}\right)-2\beta -2)^2-1}{8} \\
&= \frac{(2k-2\beta -1)^2-1}{8} \\
&= \frac{(\beta - k)(\beta - k+1)}{2},
\end{align*}
as desired.
\end{proof}

This theorem places unexpectedly strong constraints on the possible values of $l$.  Recall ${\nu^+} = {\nu^+}(K) = \min\{n \ | V_n(K) =0\}$.

\begin{theorem} \label{thm:v+ bound} For a knot $K$, there  are at most two positive values of $l$ for which $K$ can be unknotted by a negative twist of linking number $l$. If $K$ can be unknotted using negative twists of two different linking numbers, then ${\nu^+}(K) = \gamma(\gamma+1) /2$ for some $\gamma$ and the two values of $l$ are $l_1 = (1 + \sqrt{1 + 8{\nu^+}})/2)$ and $l_2 = l_1 +1$.  If $\nu^+$ is not of this form, there is at most one possible value for  $l$, and it is given by the ceiling, $\lceil  (1 + \sqrt{1 + 8{\nu^+}})/2)  \rceil $.

\end{theorem}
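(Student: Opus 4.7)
The plan is to combine Theorem~\ref{thm:vals} with the monotonicity $V_k(K) \ge V_{k+1}(K) \ge V_k(K) - 1$ to trap $\nu^+(K)$ between two explicit quadratic functions of $l$, and then to solve for $l$ in terms of $\nu^+(K)$. First I would unify the odd and even cases of Theorem~\ref{thm:vals}: substituting $l = 2\alpha + 1$ in the odd case and $l = 2\beta + 2$ in the even case, a short calculation shows that in both parities the largest index at which the theorem forces $V = 0$ is $l(l-1)/2$, while (for $l \ge 3$) the largest index at which it forces $V = 1$ is $l(l-3)/2$. Since $V_k(K)$ is non-increasing, this yields
\[ \tfrac{l(l-3)}{2} \;<\; \nu^+(K) \;\le\; \tfrac{l(l-1)}{2}, \]
or equivalently $(l-1)(l-2) \le 2\nu^+(K) \le l(l-1)$.

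Next I would solve these two quadratic inequalities for $l$ as a function of $\nu^+(K)$. The inequality $l(l-1) \ge 2\nu^+(K)$ gives $l \ge (1 + \sqrt{1+8\nu^+(K)})/2$, and the inequality $(l-1)(l-2) \le 2\nu^+(K)$ gives $l \le (3 + \sqrt{1+8\nu^+(K)})/2$, so $l$ must lie in a closed interval of length exactly $1$. Which integers occur is then determined by the arithmetic of $\sqrt{1 + 8\nu^+(K)}$: because $1 + 8\nu^+(K)$ is a positive odd integer, its square root is either an odd integer or irrational. If it is irrational, both endpoints of the interval are irrational and the interval contains exactly one integer, namely $\lceil (1 + \sqrt{1+8\nu^+(K)})/2 \rceil$. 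If it is an odd integer, write $\sqrt{1+8\nu^+(K)} = 2\gamma + 1$; then $\nu^+(K) = \gamma(\gamma+1)/2$ is triangular and the interval becomes $[\gamma+1,\gamma+2]$, which contains precisely the two consecutive integers $l_1 = \gamma+1$ and $l_2 = l_1 + 1$.

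The only point requiring extra care is the edge range $l \in \{1,2\}$, where $l(l-3)/2 \le 0$ and the lower bound from Theorem~\ref{thm:vals} is vacuous. Reading Theorem~\ref{thm:vals} directly gives $\nu^+(K) = 0$ when $l=1$ and $\nu^+(K) \le 1$ when $l=2$, both consistent with the $\gamma=0$ and $\gamma=1$ triangular cases identified above, so the statement of the theorem still holds. Beyond this bookkeeping the argument is purely elementary algebra once Theorem~\ref{thm:vals} is in hand; the conceptual content is simply the observation that the sandwich coming from that theorem, together with monotonicity of $V_k$, already forces $l$ into an interval of length~$1$.
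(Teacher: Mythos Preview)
Your argument is correct and follows essentially the same route as the paper: both derive the sandwich $l(l-3)/2 < \nu^+(K) \le l(l-1)/2$ from Theorem~\ref{thm:vals} and then solve the resulting quadratics. The one difference is that you sharpen the strict lower inequality to $(l-1)(l-2) \le 2\nu^+(K)$ via integrality \emph{before} applying the quadratic formula, which yields a closed interval of length exactly~$1$ and lets the ``one integer vs.\ two integers'' dichotomy fall out immediately from the irrationality or oddness of $\sqrt{1+8\nu^+}$; the paper instead solves the strict inequality directly to obtain the half-open interval $\bigl[\tfrac{1+\sqrt{1+8\nu^+}}{2},\tfrac{3+\sqrt{9+8\nu^+}}{2}\bigr)$ of length strictly between $1$ and $2$, and then argues separately about when two integers fit. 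Your endgame is slightly cleaner, but the content is the same.
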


\begin{proof} 
For odd $l$, if we let $k = \alpha -1$  we see that $V_{(\alpha -1)(2\alpha +1)}=1$.  Letting $k= \alpha$, we have    $V_{\alpha(2\alpha+1)}=0$.
Thus,  for $l$ odd, 
\[(\alpha-1)(2\alpha+1) < {\nu^+}\le \alpha(2\alpha +1).\]

For even $l$, if we let $k = \beta -1$ we see that $V_{(\beta -\frac{1}{2})(2\beta +2)} = 1$.  If we let $k = \beta$ we see that $V_{(\beta +\frac{1}{2})(2\beta+2)} =0$.     Thus,  we arrive at the inequalities \[\left(\beta -\frac{1}{2}\right)(2\beta +2) < {\nu^+}\le \left(\beta +\frac{1}{2}\right)(2\beta+2).\]

In either case, these are quadratic in $\alpha $ or $\beta$ and the bounds on each are determined using the quadratic formula.  Expressing either in terms of $l$ (and recalling that $\alpha, \beta\geq 0$) yields the same inequality:

\[ \frac{ 1 + \sqrt{1 + 8{\nu^+}}}{2} \le l <  \frac{ 3 + \sqrt{9 + 8{\nu^+}}}{2}  \]

For $\nu^+ >0$, the difference of these bounds is strictly between 1 and 2.  If $\nu^+ = 0$, then the difference of these bounds is exactly 2.  In either case, the interval can contain at most two integers. The left endpoint is an integer exactly when ${\nu^+} = \gamma(\gamma+1)/2$ for some integer $\gamma$.  In this case the interval contains two integers.  If $\nu^+$ is reduced by 1, then the right endpoint becomes an integer.  In this case, since the right endpoint is not included in the interval, there is only one integer in the interval.
\end{proof}

\begin{example} Let $K = T(7,8)$.  Then $K$ can be unknotted with a negative twist of linking number 7, and we have $V_0(K) = 6, V_7(K) = 3, V_{14}(K) = 1$, and $V_{21}(K) = 0$.

We also have that $K$ can be unknotted with a negative twist of linking number $8$, and $V_4(K)= 6$, $V_{12}(K) = 3$, $V_{20}(K) = 1$, and $V_{28}(K) = 0$.
\end{example}

\begin{corollary}\label{corr1}
For any knot $K$, there are at  most three values of $l$ such that  $K$ can be unknotted with a single  positive twist of linking number $l$.  Similarly, there are at most three values of $l$ such that  $K$ can be unknotted with a single negative  twist of linking number $l$.

\end{corollary}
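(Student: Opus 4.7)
The plan is to deduce Corollary~\ref{corr1} directly from Theorem~\ref{thm:v+ bound} together with a mirror-image argument. Theorem~\ref{thm:v+ bound} already asserts that for any fixed knot $K$ there are at most two \emph{positive} values of $l$ for which $K$ can be unknotted by a negative twist of linking number $l$. The only thing missing from the statement of that theorem is an accounting of $l=0$: adjoining $l=0$ as a possible third value, we obtain at most three negative-twist unknotting linking numbers in total. The case $l=0$ is not covered by Theorem~\ref{thm:v+ bound} itself because its proof hinges on $S^3_{l^2}(K)$ bounding a rational homology ball and on the identity $n=\pm l^2$ from Corollary~\ref{thm:dinvar}, and this input degenerates when $l=0$; but since we are only asked to bound the number of admissible $l$, it suffices to list $l=0$ as a single additional candidate.

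For the positive-twist assertion, I would pass to the mirror image $\overline{K}$. Mirroring $S^3$ interchanges right- and left-handed full twists and preserves linking numbers (which, by the convention in Section~\ref{sec:surgery}, are always chosen nonnegative), so $K$ admits a positive unknotting twist of linking number $l$ if and only if $\overline{K}$ admits a negative unknotting twist of linking number $l$. Applying the bound from the first paragraph to $\overline{K}$ yields the same bound of three for the positive-twist unknotting linking numbers of $K$.

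No substantive obstacle arises: Corollary~\ref{corr1} is essentially a bookkeeping consequence of Theorem~\ref{thm:v+ bound}, mirror symmetry, and the separate treatment of $l=0$. The only point that merits care is confirming that the quadratic-in-$l$ counting argument in Theorem~\ref{thm:v+ bound} indeed produces a bound of two on positive $l$ irrespective of whether the two candidate values are both odd, both even, or of mixed parity, which is already handled in its proof.
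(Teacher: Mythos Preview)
Your proposal is correct and follows essentially the same approach as the paper: apply Theorem~\ref{thm:v+ bound} to bound the number of positive linking numbers for negative twists by two, adjoin $l=0$ as the possible third value, and then invoke mirror symmetry to handle positive twists. Your explanation of why $l=0$ must be counted separately is more explicit than the paper's, but the argument is the same.
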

\begin{proof} There are two possible positive linking numbers for negative twists.  Considering mirror images, we see there are at most two possible positive linking numbers for negative twists.  Finally, there is the possibility of unknotting with a linking number 0 twist.
\end{proof}

Thus, for a given knot $K$, $|\calu(K)|\leq 6$.  This combined with Corollary \ref{cor:gcd} implies the following result.

\begin{corollary}\label{cor:6}
If $|\calu(K)| = 6$, then $\calu(K) = \{2^-, 1^-, 0^-, 0^+, 1^+, 2^+\}$.
\end{corollary}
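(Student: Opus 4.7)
The plan is to combine the counting from Corollary \ref{corr1}, the fact that $l^-\in\calu(K)$ with two values forces the two linking numbers to be consecutive (Theorem~\ref{thm:v+ bound}), and the strong signature vanishing that comes from having both signs of the linking-number-$0$ twist. Putting these together should pin down the only possible sextet.

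First I would unpack the bound $|\calu(K)|\le 6$ from Corollary~\ref{corr1}: for each sign $s\in\{\pm\}$ there are at most two positive linking numbers $l$ with $l^{s}\in\calu(K)$, plus the single possibility $0^{s}$. So if equality holds we must have $\{0^-,0^+\}\subset\calu(K)$ together with exactly two positive values $l_1<l_2$ giving negative twists and two positive values $m_1<m_2$ giving positive twists. Theorem~\ref{thm:v+ bound} then forces $l_2=l_1+1$ and $m_2=m_1+1$, and in particular $l_2,m_2\ge 2$.

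Next I would exploit $\{0^-,0^+\}\subset\calu(K)$. The corollary following the $l=0$ signature theorem tells us that if $K$ is unknotted by a linking-number-$0$ twist of sign $s$ then $\sigma_{r/q}(K)=-s\pm 1$ for every $r,q$. Taking $s=+1$ gives $\sigma_{r/q}(K)\in\{-2,0\}$ and taking $s=-1$ gives $\sigma_{r/q}(K)\in\{0,2\}$; intersecting, every Tristram--Levine signature of $K$ vanishes.

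Finally I would feed this vanishing into Corollary~\ref{cor:sigobs}. Applied to $l_2^-$ with $r=1$ it says
\[
0=\sigma_{1/l_2}(K)=-1+2(l_2-1)\pm 1,
\]
so $2(l_2-1)\in\{0,2\}$, forcing $l_2\in\{1,2\}$; since $l_2\ge 2$ we get $l_2=2$ and $l_1=1$. The identical calculation with $m_2^+$ and $r=1$ yields $1-2(m_2-1)\pm 1=0$, hence $m_2=2$ and $m_1=1$. Combining, $\calu(K)=\{2^-,1^-,0^-,0^+,1^+,2^+\}$, as claimed. The only mildly delicate point is verifying that the signature constraint at $r=1$ is sharp enough, but the quadratic $2(l_2-1)$ grows fast enough that the two allowed values of $l_2$ from the $\pm 1$ ambiguity are precisely $1$ and $2$, so the argument closes without slack.
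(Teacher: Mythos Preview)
Your argument is correct. The paper asserts only that the corollary follows from the bound $|\calu(K)|\le 6$ ``combined with Corollary~\ref{cor:gcd}'' and gives no further details; your route is different in that you replace the appeal to Corollary~\ref{cor:gcd} by the $l=0$ signature corollary (forcing every Tristram--Levine signature to vanish) followed by a direct application of Corollary~\ref{cor:sigobs} at $r=1$. This has the virtue of being completely explicit: once $\sigma_{r/q}(K)\equiv 0$, the equation $0=s-2s(l-1)\pm 1$ pins $l\in\{1,2\}$ for each of $l_2$ and $m_2$. In fact Corollary~\ref{cor:gcd} on its own does not obviously exclude a configuration such as $\{3^-,2^-,0^-,0^+,1^+,2^+\}$ (every pair of linking numbers $\ge 2$ there satisfies one of its three alternatives), so your use of the $l=0$ vanishing is doing genuine work that the bare gcd statement does not supply.

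One cosmetic point: in your two displayed equations the signs are interchanged. For a \emph{negative} twist Corollary~\ref{cor:sigobs} takes $s=1$, giving $\sigma_{1/l_2}(K)=1-2(l_2-1)\pm 1$, while for a \emph{positive} twist $s=-1$ gives $\sigma_{1/m_2}(K)=-1+2(m_2-1)\pm 1$. Since both equations have the same solution set $\{1,2\}$, this swap does not affect the conclusion.
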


The unknot realizes this unknotting set.  Whether or not there is a nontrivial knot with this unknotting set is unknown.  Such a knot would have $\nu^+(K) = \nu^+(-K) = 0$.

%%%%%%%Section%%%%%%%%%%%%%%

\section{Further Heegaard Floer obstructions}\label{sec:hf3}

Suppose that $K$ can be unknotted with a single negative twist of linking number $l$.  In terms of surgery diagrams, this means that blowing up a $-1$ unknotted circle that has linking number $l$ with $K$ creates a link $(U, K^*)$, where $K^*$ is unknotted.  Consider $Y = S^3_{l^2+1}(K)$.  Since blowing up a $-1$ lowers framings by the square of the linking number, we see that $Y$ has a surgery description given by $-1$--surgery on $U$ and $1$--surgery on $K^*$.   We can modify the surgery description again by blowing down the $+1$.  This has the effect of lowering the framing on the other component by $l^2$.  Thus, we see that $Y$ can be described by $(-l^2 -1)$--surgery on a second knot, which we denote by $J$.  The following lemma is easily proved by considering the blowup and blowdown.

\begin{lemma} If $K$ can be unknotted with a single negative twist of linking number $l$, then there is a knot $J$ and an orientation preserving  homeomorphism from $S^3_{l^2 +1}(K)$ to $S^3_{-l^2  -1}(J )$.  On homology, this homeomorphism   carries the first homology class represented by the meridian of $K$ to $l$ times the first homology class represented by the meridian of $J$.  
\end{lemma}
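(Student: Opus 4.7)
The plan is to realize the stated homeomorphism as a sequence of two Kirby-calculus moves sketched in the paragraph preceding the lemma, and to track meridian classes through each move. Applying the main theorem of Section~\ref{sec:surgery} with $s=-1$ and $n=l^2+1$, the hypothesis that $K$ admits a negative unknotting twist of linking number $l$ yields
\[
Y := S^3_{l^2+1}(K) \;\cong\; S^3_{1,-1}(U_1,U_2),
\]
where $U_1, U_2$ are both unknotted and $\text{lk}(U_1,U_2)=l$. In this description, $K$ is recovered as the image of $U_1$ after the $(-1)$--surgery on $U_2$ is performed, so the meridian $\mu_K$ of $K$ is identified with the meridian $\mu_1$ of $U_1$ in $H_1(Y)$.

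Next I would blow down $U_1$. Since $U_1$ is unknotted with framing $+1$, the $(+1)$--surgery on $U_1$ returns $S^3$, modifies $U_2$ by a single left-handed full twist along $U_1$, and (using the formula $n' = n - sl^2$ with $s=+1$) changes its framing from $-1$ to $-1-l^2$. Letting $J$ be the resulting knot yields an orientation-preserving homeomorphism
\[
Y \;\cong\; S^3_{-l^2-1}(J),
\]
under which the meridian $\mu_J$ of $J$ is identified with the meridian $\mu_2$ of $U_2$.

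For the homology statement, the surgery matrix $\left(\begin{smallmatrix} 1 & l \\ l & -1 \end{smallmatrix}\right)$ presents $H_1(Y)$ in the generators $\mu_1,\mu_2$ with the relations $\mu_1 + l\mu_2 = 0$ and $l\mu_1 - \mu_2 = 0$. The first reads $\mu_1 = -l\mu_2$, so composing the two identifications above gives $\mu_K = -l\mu_J$; reversing the orientation of $J$ (which negates $\mu_J$ without affecting the framing coefficient) turns this into $\mu_K = l\mu_J$, as required. The main obstacle here is purely one of bookkeeping: one must consistently apply the sign conventions of Section~\ref{sec:surgery}, and verify that both Kirby moves are orientation-preserving and respect the meridian correspondences claimed above; no deeper tools are needed.
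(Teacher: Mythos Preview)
Your argument is correct and follows essentially the same route as the paper: the paragraph preceding the lemma performs exactly the two Kirby moves you describe (blow up a $-1$ to pass from $S^3_{l^2+1}(K)$ to the two-component description with framings $(1,-1)$, then blow down the $+1$), and the paper simply asserts the homology statement is ``easily proved by considering the blowup and blowdown.'' Your use of the surgery presentation matrix to track the meridians makes this explicit, and the sign adjustment via reorienting $J$ is harmless since the lemma only asserts the existence of some $J$.
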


To compute $d$--invariants, we will want to reduce integers modulo $l^2 +1$ appropriately.

\begin{definition}   For $a, n \in \Z$ with $n>1$, we define $a_n$ to be the least nonnegative number for which   $a - a_n$ is divisible by $n$.  We define 
\[[a]_n = \left|   \left(a+ \frac{n-1}{2} \right)_n - \frac{n-1}{2} \right|.\]
\end{definition}

\begin{example} $\  $
\begin{itemize}

\item $[0]_4 = 0$ \hskip.2in $[1]_4 = 1$ \hskip.2in $[2]_4 = 2$ \hskip.2in $[3]_4 = 1$.
\vskip.1in

\item $[0]_5 = 0$ \hskip.2in $[1]_5 = 1$ \hskip.2in $[2]_5 = 2$ \hskip.2in $[3]_5 = 2$ \hskip.2in $[4]_5 = 1$.
\end{itemize}
\end{example}

\begin{theorem}If $K$ can be unknotted with single negative twist of linking number $l$, then there exists a knot $J$ such that
\[ d( S^3_{l^2 +1}(K), i) = d(S^3_{-l^2 -1} (J) , [li+ \beta]_{l^2+1} )\]
for all integers $i$ satisfying $0 \le i < \frac{l^2 + 1}{2}.$  Here $\beta = 0$ if $l$ is even and $\beta = \frac{l^2+1}{2} $ if $l$ is odd.
\end{theorem}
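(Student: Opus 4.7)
The plan is to combine the homeomorphism supplied by the preceding lemma with a careful translation of integer labels for Spin$^c$--structures. I would begin by applying the lemma to fix an orientation-preserving homeomorphism
\[
\phi \co S^3_{l^2+1}(K) \to S^3_{-l^2-1}(J)
\]
for some knot $J$, whose induced map on $H_1 \cong \Z/(l^2+1)$ sends the meridional generator of $K$ to $l$ times the meridional generator of $J$. Since the $d$--invariant is a diffeomorphism invariant of the pair (oriented three-manifold, Spin$^c$--structure), we immediately have $d(S^3_{l^2+1}(K), \spinc) = d(S^3_{-l^2-1}(J), \phi_{\ast}\spinc)$ for every $\spinc$.

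The remaining work is to express this identity in the integer labeling of Section~\ref{sec:hf}. Under that convention, Spin$^c$--structures on $S^3_m(K)$ sit in an affine bijection with $\Z/|m|$ based at a distinguished origin $\mathfrak{s}_0$, with the meridian class acting by $i \mapsto i+1$. Because $\phi_{\ast}$ is $\Z$--linear on $H_1$, its induced action on labels therefore has the form $i \mapsto li + \beta$, where $\beta \in \Z/(l^2+1)$ measures the discrepancy between $\phi_{\ast}(\mathfrak{s}_0)$ and the origin $\mathfrak{s}_0'$ on the $J$--side.

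The heart of the argument is pinning down $\beta$. Since $\phi$ is orientation-preserving and conjugation of Spin$^c$--structures commutes with diffeomorphisms, $\phi_{\ast}(\mathfrak{s}_0)$ must be self-conjugate whenever $\mathfrak{s}_0$ is. When $l$ is even, $l^2+1$ is odd and there is a unique self-conjugate Spin$^c$--structure on $S^3_{-l^2-1}(J)$, forcing $\beta = 0$. When $l$ is odd, $l^2+1$ is even and the self-conjugate Spin$^c$--structures are labeled $0$ and $(l^2+1)/2$; one distinguishes between these two possibilities by tracking the parity of the characteristic element extending $\mathfrak{s}_0$ through the blowup-blowdown cobordism that realizes $\phi$, giving $\beta = (l^2+1)/2$.

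Finally, the affine map outputs a value only well-defined modulo $l^2+1$, whereas the labels in the statement are required to lie in the fundamental domain $0 \le j \le (l^2+1)/2$. The conjugation symmetry $d(Y, \spinc) = d(Y, \overline{\spinc})$ acts as $j \mapsto -j$ on labels, so one folds $li + \beta$ into that domain; the resulting expression is exactly $[li+\beta]_{l^2+1}$ as defined just before the theorem. I expect the parity computation determining $\beta$ in the odd case to be the main technical obstacle; the remainder is careful bookkeeping with labeling conventions.
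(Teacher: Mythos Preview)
Your proposal is correct and follows the same overall structure as the paper: invoke the lemma's homeomorphism, note that $d$--invariants are diffeomorphism invariants, translate the induced map on Spin$^c$--structures into the affine map $i \mapsto li + \beta$ on integer labels, and then pin down $\beta$ using self-conjugacy. The even-$l$ case and the fold into the fundamental domain via $[\,\cdot\,]_{l^2+1}$ are handled identically.

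The one substantive difference is in the odd-$l$ case. You propose determining $\beta$ by tracking the parity of the characteristic element through the blowup--blowdown cobordism; this is valid but, as you anticipate, is the most labor-intensive part. The paper sidesteps this entirely with a shortcut: once you know $\beta \in \{0, (l^2+1)/2\}$, simply observe that the choice $\beta = 0$ is inconsistent because the two $d$--invariants in the statement would fail to agree modulo $\Z$. Concretely, using Theorem~\ref{thm:din2} the fractional part of $d(S^3_{l^2+1}(K), i)$ depends on $i \pmod 2$ (since $l^2+1$ is even), and multiplication by $l$ (which is odd) preserves parity, so $li$ lands in the wrong residue class while $li + (l^2+1)/2$ lands in the correct one. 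This arithmetic check replaces the cobordism computation and makes the proof essentially a one-liner once the lemma is in hand.
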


\begin{proof} The only issue that requires proof is the term $\beta$ that appears.  The issue arises because of how the \Spc--structures are parameterized with integers. In the case that $n = l^2 +1$ is odd there is a unique Spin--structure on $S^3_{-l^2 -1} (K)$ and this determines which \Spc--structure is denoted $\spinc_0$.  However, if $n$ is even, there are two Spin structures, one of which corresponds to $\spinc_0$ and the other to $\spinc_k$, where $k = \frac{l^2 +1}{2}$.
There is a simple means to rule out one of the possibilities:   if $n = l^2 +1$ is even, and $\beta =0$, then 
\[ d( S^3_{l^2 +1}(K), i) \ne d(S^3_{-l^2 -1} (J) , [li+ \beta]_{l^2+1} ) \mod \Z.\]

\end{proof}

It is simpler to have both surgery coefficients positive, so we consider the mirror image of $J$ and use the symmetry to $d$--invariant under conjugation to conclude the following.

\begin{theorem}If $K$ can be unknotted with single negative twist of linking number $l$, then there exists a knot $J'$ such that
\[ d( S^3_{l^2 +1}(K), i) = -d(S^3_{ l^2+1} (J') , [li+\beta]_{l^2 +1})\]
for all integers $i$ satisfying $0 \le i < \frac{l^2 + 1}{2}.$ Here $\beta = 0$ if $l$ is even and $\beta = \frac{l^2+1}{2} $ if $l$ is odd.
\end{theorem}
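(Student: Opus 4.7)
The plan is to derive the statement directly from the previous theorem by passing to the mirror image of $J$ and invoking the standard behavior of the Heegaard Floer $d$-invariants under orientation reversal. Let $J$ be the knot produced by the previous theorem, so that
\[d(S^3_{l^2+1}(K), i) = d(S^3_{-l^2-1}(J), [li+\beta]_{l^2+1})\]
for $0 \le i < (l^2+1)/2$. I would set $J'$ equal to the mirror image of $J$ and use the classical identity $S^3_n(\overline{L}) \cong -S^3_{-n}(L)$ for mirror knots to obtain an oriented homeomorphism $S^3_{l^2+1}(J') \cong -S^3_{-l^2-1}(J)$. Combined with the symmetry $d(-Y,\spinc) = -d(Y,\spinc)$, this converts the right-hand side above into $-d(S^3_{l^2+1}(J'),\spinc')$ for some Spin$^c$-structure $\spinc'$ on $S^3_{l^2+1}(J')$.

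It remains to identify $\spinc'$ with the Spin$^c$-structure labeled by $[li+\beta]_{l^2+1}$ in the standard parameterization. Under mirroring, the canonical integer label of a Spin$^c$-structure is sent either to itself or to its negative modulo $l^2+1$, depending on convention, but the bracket notation is designed precisely to absorb this ambiguity: one has $[a]_n = [-a]_n$ by definition of $[\,\cdot\,]_n$, and in addition the conjugation symmetry of the $d$-invariant on a rational homology sphere eliminates any remaining freedom. When $l^2+1$ is even there are two choices of Spin structure, but they are distinguished, exactly as in the proof of the previous theorem, by whether the mod $\Z$ reductions of the two sides agree, so the correct Spin$^c$-structure on $S^3_{l^2+1}(J')$ is singled out with no additional effort.

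The main, and indeed only, potential obstacle is this bookkeeping of Spin$^c$-labels under the combined effect of mirroring and orientation reversal. I expect this to be a matter of unwinding the conventions used in the parameterization of Spin$^c$-structures on integer surgeries and in the definition of the $[\,\cdot\,]_n$ notation, rather than a substantive mathematical difficulty.
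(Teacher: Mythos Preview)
Your proposal is correct and follows essentially the same route as the paper: take the mirror image $J'$ of the knot $J$ from the previous theorem, use the orientation-reversing homeomorphism $S^3_{-l^2-1}(J)\cong -S^3_{l^2+1}(J')$, and apply $d(-Y,\spinc)=-d(Y,\spinc)$ together with conjugation symmetry to match the Spin$^c$-labels. The paper in fact says even less about the Spin$^c$-bookkeeping than you do, so your treatment is already at least as detailed.
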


We can now apply Theorem~\ref{thm:din2} to get the following result.

\begin{theorem} Suppose that $K$ can be unknotted with single negative twist of linking number $l$ and  $n = l^2+1$.   Then there exists a knot $J'$  such that for all $i$ satisfying  $0 \le i \le n/2$,

\[ \frac{(2i - n)^2 - n}{4n} - 2 V_i(K) =  - \frac{(2[li+\beta]_{n} - n)^2 - n}{4n} + 2 V_{[li+\beta]_{n}}(J').\]  
Here $\beta = 0$ if $l$ is even and $\beta = n/2 $ if $l$ is odd.

\end{theorem}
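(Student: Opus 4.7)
\smallskip

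The plan is to combine the preceding theorem with Theorem~\ref{thm:din2}, which supplies the explicit formula $d(S^3_n(K),i) = \frac{(2i-n)^2-n}{4n} - 2V_i(K)$ valid for $0\le i \le n/2$. The preceding theorem already identifies $d(S^3_{l^2+1}(K),i) = -\,d(S^3_{l^2+1}(J'),[li+\beta]_{l^2+1})$ for a suitable knot $J'$, so substituting Theorem~\ref{thm:din2} into each side should produce the claimed equality almost verbatim; the content of the present theorem is essentially a translation of the $d$--invariant identity into a relation between the $V_i$'s.

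First I would apply Theorem~\ref{thm:din2} to $d(S^3_{l^2+1}(K),i)$ on the left. Since the hypothesis $0 \le i \le n/2$ coincides with the range required by Theorem~\ref{thm:din2}, this yields $\frac{(2i-n)^2-n}{4n} - 2V_i(K)$ with no further argument.

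For the right-hand side I need Theorem~\ref{thm:din2} applied to $J'$ at the index $j = [li+\beta]_n$. The small verification required is that $j \in [0,n/2]$, but this is automatic from the definition: $(a+(n-1)/2)_n$ represents a class in $[0,n)$, subtracting $(n-1)/2$ lands in $[-(n-1)/2,(n-1)/2]$, and then the absolute value lies in $[0,(n-1)/2] \subset [0,n/2]$. Substituting produces
\[ d(S^3_{l^2+1}(J'),[li+\beta]_n) = \frac{(2[li+\beta]_n - n)^2 - n}{4n} - 2V_{[li+\beta]_n}(J'),\]
and multiplying by the factor $-1$ from the preceding theorem converts this into the right-hand side of the claim.

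The one place I would expect to need genuine care is the parity split encoded by $\beta$: when $l$ is odd, $n=l^2+1$ is even, so $(n-1)/2$ is a half-integer and there are two Spin structures on $S^3_{\pm n}(K)$, one of which corresponds to $\mathfrak{s}_0$ and the other to $\mathfrak{s}_{n/2}$. The choice $\beta = n/2$ is precisely what is needed to match the $\Spc$--labeling carried across by the homeomorphism of the previous theorem; the other natural choice $\beta=0$ would produce a mismatch mod $\Z$ in $d$--invariants. Bookkeeping through this parity distinction is the only nontrivial aspect of the argument, and once it is handled the conclusion is a direct substitution requiring no new Heegaard Floer input.
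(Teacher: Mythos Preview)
Your proposal is correct and takes essentially the same approach as the paper: the paper's entire proof is the single sentence ``We can now apply Theorem~\ref{thm:din2} to get the following result,'' and you have spelled out exactly that substitution, including the check that $[li+\beta]_n \in [0,n/2]$ so Theorem~\ref{thm:din2} applies on the $J'$ side. The parity discussion you flag is already absorbed into the preceding theorem's definition of $\beta$, so no additional work is needed here.
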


Rearranging the terms of this expression, we have:

\begin{corollary}\label{cor:V} Suppose that $K$ can be unknotted with single negative twist of linking number $l$ and  $n = l^2+1$.   Then there exists a knot $J'$  such that for all $i$ satisfying  $0 \le i \le n/2$,

\[   V_{[li+\beta]_{n}}(J') =  -\frac{1}{4}   -\frac{i}{2}-  \frac{[li+\beta]_{n}}{2} + \frac{i^2}{2n} +  \frac{[li+\beta]_{n}^2}{2n}  + \frac{n}{4} -   V_i(K).   \]

Here $\beta = 0$ if $l$ is even and $\beta = n/2 $ if $l$ is odd.
\end{corollary}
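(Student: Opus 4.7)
The plan is to derive Corollary~\ref{cor:V} as a direct algebraic rearrangement of the theorem immediately preceding it. Writing $j \defeq [li+\beta]_{n}$ for brevity, the preceding theorem provides the identity
\[
\frac{(2i-n)^2 - n}{4n} - 2 V_i(K) \;=\; -\,\frac{(2j-n)^2 - n}{4n} + 2 V_{j}(J'),
\]
and the entire content of the corollary is the result of solving this equation for $V_{j}(J')$. So I would simply carry out this isolation carefully.

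First, I would move the $V_i(K)$ term and the squared $j$-term to isolate $2V_j(J')$ on one side, obtaining
\[
2 V_{j}(J') \;=\; \frac{(2i-n)^2 + (2j-n)^2 - 2n}{4n} - 2 V_i(K).
\]
Next, I would expand $(2i-n)^2 = 4i^2 - 4in + n^2$ and similarly for $(2j-n)^2$, which combines to give $4i^2 + 4j^2 - 4n(i+j) + 2n^2$. Substituting and dividing each term by $4n$ collapses the expression into
\[
2 V_{j}(J') \;=\; \frac{i^2}{n} + \frac{j^2}{n} - (i+j) + \frac{n}{2} - \frac{1}{2} - 2 V_i(K).
\]
Finally, dividing by $2$ and substituting back $j = [li+\beta]_{n}$ produces exactly the formula in the statement.

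The main ``obstacle'' here is purely bookkeeping: I need to verify that the hypothesis $0 \le i \le n/2$ guarantees we are in the range where the preceding theorem applies, and that the parameter $\beta$ (which is $0$ or $n/2$ according to the parity of $l$) is carried through the algebra unchanged, since it enters only via the definition of $j$ and never interacts with the quadratic manipulation. No additional Heegaard Floer input is required beyond what is already encoded in the preceding theorem.
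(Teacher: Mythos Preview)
Your proposal is correct and matches the paper's approach exactly: the paper introduces the corollary with the single phrase ``Rearranging the terms of this expression, we have,'' and your write-up simply carries out that rearrangement explicitly. There is nothing to add.
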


To apply this corollary, recall the following property of the $V_i$ invariants:
\begin{equation} \label{eq:vi prop} 0\leq V_i(K)-V_{i+1}(K) \leq 1. \end{equation}
Let $n = l^2+1$ and define 
\[j(i) = [li+\beta]_{n}\text{  and  } s(i) = -\frac{1}{4}   -\frac{i}{2}-  \frac{j(i)}{2} + \frac{i^2}{2n} +  \frac{j(i)^2}{2n}  + \frac{n}{4}\] 
so that, for each $i \in \{0, \dots, n/2\}$, we have 
\begin{equation} \label{eq:vi1} V_{j(i)}(J') = s(i)-V_i(K). \end{equation}
If $j(i) < n/2$, we also have 
\begin{equation} \label{eq:vi2} V_{j(i)+1}(J') = s(i')-V_{i'}(K) \end{equation}
for some $i'$.
Substituting Equations \eqref{eq:vi1} and \eqref{eq:vi2} into Equation \eqref{eq:vi prop} and rearrranging, we get:
\begin{itemize}

\item if $i'<i$, then $\displaystyle{s(i')-s(i)\leq V_{i'}(K)-V_i(K)\leq s(i')-s(i)+1}$, and

\item if $i<i'$, then $\displaystyle{s(i)-s(i')-1\leq V_{i}(K)-V_{i'}(K)\leq s(i)-s(i')}.$
\end{itemize}
This process yields $n/2$ inequalities.  Due to Equation \eqref{eq:vi prop}, some of these inequalities will be redundant.

\begin{example}  Consider the case of $l=0$ and $i=0$.  Then Corollary~\ref{cor:V} implies that $V_0(J') = - V_0(K)$.  Since these are positive, we have the following theorem, first proved by Sato~\cite{MR3823992}.

\begin{theorem}  If $K$ can be unknotted with single twist of linking number $l = 0$, then $\nu^+(K) = 0$.

\end{theorem}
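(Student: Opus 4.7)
The plan is to obtain this theorem as the degenerate specialization of Corollary~\ref{cor:V}. Setting $l = 0$ in that corollary gives $n = l^2 + 1 = 1$, and since $l$ is even one has $\beta = 0$; the constraint $0 \le i \le n/2$ then admits only the single index $i = 0$.

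Next, I would evaluate the relevant quantities at $i = 0$. With $n = 1$ and $\beta = 0$, we have $j(0) = [0+0]_1 = 0$, and substituting into the definition of $s(i)$ yields
\[
s(0) = -\tfrac{1}{4} - 0 - 0 + 0 + 0 + \tfrac{1}{4} = 0.
\]
Corollary~\ref{cor:V} therefore collapses to the single equation
\[
V_0(J') = -V_0(K),
\]
where $J'$ is the auxiliary knot produced by the corollary.

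The theorem then follows from the fact that $V_i(\cdot) \ge 0$ for every knot and every $i \ge 0$, which is an immediate consequence of the two bulleted properties of $V_k$ listed in Section~\ref{sec:hf}: the sequence $V_k(K)$ is nonincreasing in $k$ and eventually equals zero, hence is nonnegative throughout. Applied to both sides of $V_0(J') = -V_0(K)$, this forces both quantities to vanish. Hence $V_0(K) = 0$, and since $\nu^+(K)$ is by definition the least $n$ with $V_n(K) = 0$, we conclude $\nu^+(K) = 0$.

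There is essentially no obstacle once Corollary~\ref{cor:V} is in hand; the whole argument amounts to evaluating that corollary at its simplest case and invoking nonnegativity of the $V_i$. The only point worth noting is the sign convention: Corollary~\ref{cor:V} was stated for negative twists, so a positive twist of linking number zero is handled by applying the same reasoning to the mirror $-K$, yielding $\nu^+(-K) = 0$ in that case.
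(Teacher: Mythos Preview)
Your proof is correct and follows essentially the same argument as the paper: specialize Corollary~\ref{cor:V} to $l=0$, $i=0$ to obtain $V_0(J') = -V_0(K)$, and conclude from nonnegativity of the $V_i$. Your version is in fact slightly more careful than the paper's, since you verify the arithmetic of $j(0)$ and $s(0)$ explicitly and flag the sign-of-twist issue that the paper leaves implicit.
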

\end{example}

\begin{example} Consider the case of $l = 4$.  We have the following table of values.

\[
\begin{array}{|c|c|c|c|c|c|c|c|c|c|c|c|c|c|c|c|c|c|} % brackets may be (...), [...], \{...\}, or left out
\hline
	i & 0&1&2&3&4&5&6&7&8 \\
	\hline
	j(i) & 0& 4& 8& 5& 1& 3& 7& 6& 2\\
	\hline
	s(i) & 4&2&1&1&2&1&0&0&1 \\
	\hline
	\end{array} \]
We conclude that if a knot $K$ can be unknotted with a single negative twist of linking number $l=4$, then the following inequalities must be satisfied.  (The redundant inequalities have been removed.)

\begin{align*}
1 \le V_0(K)  &- V_4(K)   \le 2 \\
V_4(K) &- V_8(K)  \le 1 \\
V_5(K) &- V_8(K)  \le 1 \\
1\le V_1(K) &- V_5(K)  \le 2 \\
V_1(K) &- V_3(K)  \le 1 \\
V_3(K) &- V_7(K)  \le 1\\
V_2(K) &- V_6(K)  \le 1.
\end{align*}
\end{example}

\begin{example} \label{ex:3,17}
In the case of $l=7$, a similar computation yields 23 inequalities after 6 redundant ones have been removed.  (This is a tedious computation which we omit.)  We now compare this to the values given by Theorem \ref{thm:vals}:  if a knot $K$ can be unknotted with a negative twist of linking number $l=7$, then 
\[V_0(K) = 6, V_7(K) = 3, V_{14}(K) = 1, V_{21}(K) = 0.\]  
Note that this implies that $V_i(K) = 0$ for all $i\geq 21$.  Imposing these restrictions reduces our initial list of inequalities to a list of 14 inequalities (5 of which consist of a single sub-inequality).  This indicates that, for a fixed linking number $l$, the construction may yield finer information than Theorem \ref{thm:vals}.  For example, the inequality 
\[1\leq V_{9}(K)-V_{16}(K) \leq 2 \] 
remains, while Theorem \ref{thm:vals} tells us that 
\[1\leq V_9(K) \leq 3 \text{ and } 0\leq V_{16}(K) \leq 1,\] 
implying that 
\[0\leq V_9(K)-V_{16}(K) \leq 3,\] 
a broader range.

Consider the knot $K = T(3,17)$.  One can compute the following table of $V_i$ invariants.
\[
\begin{array}{|c|c|c|c|c|c|c|c|c|c|c|c|c|c|c|c|c|c|} % brackets may be (...), [...], \{...\}, or left out
\hline
	i & 0&1&2&3&4&5&6&7&8&9&10&11&12&13&14&15&16 \\
	\hline
	V_i(K) & 6&5&5&5&4&4&4&3&3&3&2&2&2&1&1&1&0 \\
	\hline
	\end{array} \]
\noindent We see that Theorem \ref{thm:vals} cannot rule out a negative twist of linking number 7, while our work above does:
\[V_9(K)-V_{16}(K) = 3>2.\]
Furthermore, a negative twist of linking number 7 is not ruled out by Corollary \ref{cor:sigobs} (signature obstruction) or Theorem \ref{thm:arf obs} (Arf obstruction). 
\end{example}

%%%%%%%%%%%%SECTION%%%%%%%%%%%

\section{Heegaard Floer obstructions related to the Upsilon invariant}\label{sec:hf5}

As we have seen, the invariants $V_i(K)$ provide strong obstructions for a given integer $l $ to satisfy $l \in \calu(K)$.  However, these invariants can be difficult to compute; for instance, they do not behave additively under connected sums of knots.  In this section, we will apply Theorems \ref{thm:vals} and \ref{thm:v+ bound} along with  Proposition \ref{prop:upsilonVbound} to determine bounds on  $l$ for which the specific computation of the $V_k$ would be difficult.

\begin{example}
Consider the knot $K = T(2,25)-T(3,8)$.  This knot has $\tau (K) = 5$ and $g_4(K) =7$  (see \cite{MR3622312}).  Since $\tau(K) \leq \nu^+(K)\leq g_4(K)$ (see \cite{MR3523259}), Theorem \ref{thm:v+ bound} implies
\[\frac{1+\sqrt{1+8\tau(K)}}{2}\leq l < \frac{3+\sqrt{9+8g_4(K)}}{2}\]
and we have $4\leq l \leq 5$.

From Theorem \ref{thm:vals}, we know the following:
\begin{align}
\text{If } l=4, &\text{ then } V_2 = 1, V_6 = 0.  \\
\text{If } l=5, &\text{ then } V_0 = 3, V_5 = 1, V_{10} = 0. 
\end{align}
Proposition~\ref{prop:upsilonVbound} yields a list of restrictions on the Upsilon function of $K$:
\begin{itemize}

\item If $l=4$, then $\displaystyle{\U_K(t)\geq \max \{-2t-2, -6t\} = \begin{cases} -6t & t\leq 1/2 \\ -2t-2 & t\geq 1/2 \end{cases}.}$

\item If $l=5$, then $\displaystyle{\U_K(t)\geq \max \{-6, -5t-2, -10t\}= \begin{cases} -10t & t\leq 2/5 \\ -5t-2 & 2/5 \leq t\leq 4/5 \\ -6 & t\geq 4/5 \end{cases}.}$
\end{itemize}
On the other hand, Upsilon functions of torus knots are easily computed~\cite{MR3667589}.  We have that
\[\U_{T(2,25)}(t) = \begin{cases} -12t & 0\leq t \leq 1 \\ 12t-24 & 1\leq t \leq 2 \end{cases}
\;\;   \text{ and } \;\;   
\U_{T(3,8)}(t) =  \begin{cases} -7t & 0\leq t\leq 2/3 \\ -t-4 & 2/3\leq t \leq 1 \\ t-6 & 1 \leq t \leq 4/3 \\ 7t-14 & 4/3 \leq t \leq 2 \end{cases}  \]
and so
\[\U_{T(2,25)-T(3,8)}(t) = \begin{cases} -5t & 0\leq t\leq 2/3 \\ -11t+4 & 2/3 \leq t \leq 1 \\ 11t-18 & 1\leq t \leq 4/3 \\ 5t-10 & 4/3\leq t \leq 2     \end{cases}.\]
Comparing this to the restrictions above, we have an obstruction when $t=1$ for both $l=4$ and $l=5$.  We conclude that the knot $K = T(2,25)-T(3,8)$ cannot be unknotted with a negative twist of linking number $l>0$.
\end{example}

%%%%%%%Section%%%%%%%%%%%%%%
\section{Obstructions from the Heegaard Floer homology of double branched covers}\label{sec:hf6}

In Section~\ref{sec:link} we explored how the linking form on the two-fold branched cover of a knot $K$ provides obstructions to unknotting with a single twist.  The Heegaard Floer correction term, the $d$--invariant, can be thought of as a $\Q$--valued lifting of the self-linking form, which takes values in $\Q/\Z$.  Thus, as we now describe, when the linking form obstructions vanish, it is possible for the lifted invariants to provide non-trivial obstructions.

The needed result from Heegaard Floer theory is the following.

\begin{theorem}[\cite{MR1957829,MR2388097}] \label{thm:dinvbound}
Let $Y$ be a rational homology three-sphere which is the boundary of a simply-connected positive-definite four-manifold $X$  with $|H^2(Y;\Z)|$ odd.  Let the intersection pairing of $X$ be represented in a basis by the matrix $Q$. Define a function 
\[m_Q: \Z^r/Q(\Z^r) \rightarrow \Q \]
by
\[m_Q(g) = \min \left.\left\{ \frac{\xi^TQ^{-1}\xi - r}{4} \,\right| \xi\in \text{Char}(Q), [\xi]=g\right\} \] 
where $\text{Char}(Q)$ is the set of characteristic covectors for $Q$. Then there exists a group isomorphism 
\[\phi : \Z^r/Q(\Z^r) \rightarrow \text{Spin}^c(Y) \]
with
\[m_Q(g)\geq d(Y, \phi(g)), \]
\[ \text{ and } \;\; m_Q(g) \equiv  d(Y, \phi(g))\;\; (\text{mod } 2) \]
for all $g\in \Z^r/Q(\Z^r).$
\end{theorem}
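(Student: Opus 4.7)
The plan is to derive the theorem from the standard Ozsv\'ath--Szab\'o correction-term inequality for negative-definite four-manifolds, applied to $-X$. I would proceed in three steps.

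First, build $\phi$. The cohomology long exact sequence of the pair $(X,Y)$, combined with $H^1(X)=0$ (simple connectivity), $H^1(Y;\Q)=0$, and the Poincar\'e--Lefschetz identification $H^2(X,Y)\cong H_2(X)\cong\Z^r$, yields
\[ 0 \to \Z^r \xrightarrow{Q} \Z^r \to H^2(Y;\Z) \to 0, \]
so $H^2(Y;\Z)\cong \Z^r/Q(\Z^r)$. Since $|H^2(Y;\Z)|$ is odd, $Y$ admits a unique Spin structure, which serves as a canonical basepoint identifying $\text{Spin}^c(Y)$ with $H^2(Y;\Z)$. On the other side, $\text{Spin}^c(X)$ is parametrized by $\text{Char}(Q)\subset\Z^r$ via $\mathfrak{s}\mapsto c_1(\mathfrak{s})$, and restriction to the boundary descends to the asserted isomorphism $\phi\co\Z^r/Q(\Z^r)\to\text{Spin}^c(Y)$.

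Second, invoke as a black box the negative-definite inequality of Ozsv\'ath--Szab\'o: if $W$ is a smooth negative-definite four-manifold with rational homology sphere boundary $Y'$, then for every $\mathfrak{s}\in\text{Spin}^c(W)$,
\[ c_1(\mathfrak{s})^2 + \text{rk}\, H^2(W;\Z) \le 4\, d(Y',\mathfrak{s}|_{Y'}). \]
This is the one genuinely nontrivial input; its proof compares the absolute grading shift of the cobordism map on $HF^\infty$ with the grading of its image in $HF^+$. Apply this to $W=-X$, which is negative-definite with intersection form $-Q$ and boundary $-Y$. For a characteristic $\xi$ of $Q$ (equivalently of $-Q$), the left side becomes $-\xi^TQ^{-1}\xi+r$, while the right side is $-4\, d(Y,\phi([\xi]))$ by the standard identity $d(-Y,\mathfrak{t})=-d(Y,\mathfrak{t})$ (together with conjugation-invariance of $d$). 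Rearranging gives $d(Y,\phi([\xi]))\le (\xi^TQ^{-1}\xi-r)/4$, and minimizing over $\xi$ in a fixed class $g$ yields $d(Y,\phi(g))\le m_Q(g)$.

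Third, for the mod $2$ congruence I would compute directly: if $\xi'=\xi+2Qv$ then a short expansion shows
\[ \frac{\xi'^TQ^{-1}\xi' - \xi^TQ^{-1}\xi}{4} = v^T\xi + v^TQv, \]
which lies in $2\Z$ because $\xi$ characteristic forces $v^T\xi\equiv v^TQv\pmod 2$. Hence $(\xi^TQ^{-1}\xi-r)/4$ has a well-defined residue modulo $2$ on each coset, and the same cobordism grading-shift formula underlying step two identifies this residue with that of $d(Y,\phi(g))$. The clear obstacle is step two: every other step is linear algebra and formal properties of Spin$^c$-structures, whereas the negative-definite inequality is a deep structural result about $HF^+$ under cobordism, and is the only place where any Heegaard Floer machinery is genuinely used.
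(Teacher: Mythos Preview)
The paper does not give its own proof of this statement; it is quoted as a background theorem from the cited references~\cite{MR1957829,MR2388097}. Your sketch is precisely the standard derivation: set up $\phi$ via the long exact sequence of $(X,Y)$, apply the Ozsv\'ath--Szab\'o negative-definite inequality to $-X$, and read off the congruence from the absolute grading formula. The arithmetic in your three steps checks out, including the coset-independence computation in step three; as you correctly flag, the only nontrivial input is the negative-definite inequality itself, which is exactly what the paper is citing rather than proving.
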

Note that $\text{Char}(Q)$  corresponds to  the set of first Chern classes of \Spc--structures for $X$ and we are using the identification of \Spc--structures on $Y$ with $H^2(Y)$.  We will also use the fact that 
\[ \text{Char}(Q) = \{\xi = (\xi_1, \xi_2, \dots , \xi_r) \in \Z^r \mid \xi_i \equiv Q_{ii}\}.\]
According to~\cite{MR2388097}, to compute $m_Q$  it suffices to consider characteristic covectors such that 
\[-Q_{ii}\leq \xi_i \leq Q_{ii}-2,\]
which, for rank 2 forms, makes the computation fast, even for relatively large values of the $Q_{ii}$.

To illustrate the use of application of these results to the untwisting problem, we begin with a basic example.

\begin{example}
Suppose a knot $K$ satisfies  $\sigma(K) = -2$ and $\det(K) = 3$; for instance the trefoil knot  or any knot with the same Seifert form.    Suppose that, like the trefoil,  $K$ can be unknotted with a  negative twist of linking number 2. Corollary \ref{cor:two-fold} implies that $M_2(-K)$ bounds a simply-connected, positive-definite four-manifold $N$ with  $b_2(N) = 2$ and intersection pairing 
\[Q= \begin{pmatrix} a & b \\ b & a \end{pmatrix} \]  with determinant $3$.  There are only two such matrices:
\[ \begin{pmatrix} 2 & 1 \\ 1 & 2 \end{pmatrix} \text{ and }  \begin{pmatrix} 2 & -1 \\ -1 & 2 \end{pmatrix}. \]
These differ by a change of basis, so we consider only $Q = \begin{pmatrix} 2 & 1 \\ 1 & 2 \end{pmatrix}$.    Then we have the quotient map $\phi \co \Z^2 \to \Z^2/Q(\Z^2) \cong \Z/3\Z \cong H^2(Y)$; the cosets of the kernel have representatives 
$\displaystyle{g_0 = \begin{pmatrix} 0\\0 \end{pmatrix}, } $  
$\displaystyle{  g_1 = \begin{pmatrix} 0\\1 \end{pmatrix},} $ and  
$ \displaystyle{   g_2 = \begin{pmatrix} 0\\2 \end{pmatrix}}$.   To compute $m_Q$ we consider the subset of characteristic covectors
\[ \left\{ \begin{pmatrix} -2\\-2 \end{pmatrix}, \begin{pmatrix} -2\\0 \end{pmatrix}, \begin{pmatrix} 0\\0 \end{pmatrix}, \begin{pmatrix} 0\\-2 \end{pmatrix} \right\}. \]
A quick computation shows that only $ \begin{pmatrix} 0\\0 \end{pmatrix}$ is in the coset of $ \begin{pmatrix} 0\\0 \end{pmatrix}$, only $ \begin{pmatrix} -2\\-2 \end{pmatrix}$ is in the coset of $ \begin{pmatrix} 0\\2 \end{pmatrix}$, and $ \begin{pmatrix} -2\\0 \end{pmatrix}$ and $ \begin{pmatrix} 0\\-2 \end{pmatrix}$ are both in the coset of $ \begin{pmatrix} 0\\1 \end{pmatrix}$.
We compute that
\[m_Q(g_0) =-\frac{1}{2} \;\;\text{ and }\;\; m_Q(g_1) = m_Q(g_2) = \frac{1}{6}. \] 
Thus the theorem implies that  $\phi$ satisfies
\[ -\frac{1}{2} \geq d(\Sigma(-K), \phi(g_0)), \]
\[ \frac{1}{6} \geq d(\Sigma(-K), \phi(g_1)), \]
\[ \frac{1}{6} \geq d(\Sigma(-K), \phi(g_2)). \]
Note that $\phi(g_0)$ necessarily represents the Spin--structure on the two-fold branched cover.  In the case where $K$ is the trefoil, $\Sigma(-K) = -L(3,1)$ and the three bounds are sharp.

Manolescu and Owens~\cite{MR2363303} computed that for the untwisted right-handed Whitehead double of the trefoil, $J = \text{Wh}^+(T_{2,3}, 0)$, the $d$--invariant of the Spin--structure on its two-fold branched cover is $-4$.   This knot also satisfies $\Delta_J(t) = 1$, and thus $\det(J) = 1$ and $\sigma(J) = 0$.  Thus, the calculation shows that $T_{2,3} \cs J$ cannot be unknotted with a negative twist of linking number 2, and this cannot be obstructed by any classical knot invariant.
\end{example}

\begin{example}
Consider the knot $K =9_5$.  This is a two-bridge knot with $\sigma(K) =-2$,  $\det(K) = 23$, and $\Sigma(K) = L(23,17)$.  We will show that $2^-\not\in \mathcal{U}(K)$. Suppose that $K$ could be unknotted with a single negative twist of linking number $2$.  Then Corollary \ref{cor:two-fold} implies that $\Sigma(-K)$  bounds a simply-connected, positive-definite four-manifold $N$ with  $b_2(N) = 2$ and intersection pairing 
\[Q= \begin{pmatrix} a & b \\ b & a \end{pmatrix} \] with determinant 23. Up to change of basis, there is only one such matrix:
\[ Q = \begin{pmatrix} 12 & 11 \\ 11 & 12 \end{pmatrix}. \]
Note that this matrix is not ruled out by the methods of Section~\ref{sec:link}. We have the quotient map $\psi:\Z^2 \rightarrow \Z^2/Q(\Z^2) \cong \Z/23\Z\cong H^2(\Sigma(-K))$; the cosets of the kernel have representatives $g_i = \begin{pmatrix} 0 \\ i \end{pmatrix}$ for $0\leq i \leq 22$.  We compute that, in particular, \[m_{Q}(g_4) = -\frac{19}{46}. \]
In \cite{MR1957829}, Ozs\'ath and Szab\'o give a formula for computing the $d$-invariants for $-L(p,q)$.  We find that the set of $d$-invariants for $Y = \Sigma(-K) = -L(23, 17)$ are
\begin{equation*}
\begin{split} 
& \left\{ \frac{29}{46}, \frac{1}{46}, -\frac{11}{46}, -\frac{7}{46}, \frac{13}{46}, \frac{49}{46}, \frac{9}{46}, -\frac{15}{46}, -\frac{1}{2}, -\frac{15}{46}, \frac{9}{46}, \frac{49}{46}, \frac{13}{46}, \right. \\
& \left. \;\; -\frac{7}{46}, -\frac{11}{46}, \frac{1}{46}, \frac{29}{46}, \frac{73}{46}, \frac{41}{46}, \frac{25}{46}, \frac{25}{46}, \frac{41}{46}, \frac{73}{46} \right\}. 
\end{split}
\end{equation*}
Among these, the only value which is congruent to $-\frac{19}{46}$ modulo $2\Z$ is $\frac{73}{46}$.  Thus any isomorphism $\phi$ such that 
\[d(Y, \phi(g_4)) \equiv m_{Q}(g_4) \text{ (mod }2)\]
would not satisfy 
\[ d(Y, \phi(g_4)) \leq m_{Q}(g_4).\]
By Theorem \ref{thm:dinvbound}, we have reached a contradiction.  Therefore, $K = 9_5$ cannot be unknotted with a single negative twist of linking number $2$. 
\end{example}

%%%%%%%Section%%%%%%%%%%%%%%

\section{Obstructions for alternating knots}\label{sec:alt}

In \cite{MR3134023}, Petkova showed that the minus version of the Heegaard Floer knot complex, $\cfk ^-(K)$, of a thin knot $K$ is completely determined by its Ozsv\'ath-Szab\'o tau invariant and its Alexander polynomial.  Alternating knots are a subset of thin knots and it is known (see \cite{MR2026543}) that the tau invariant of an alternating knot is determined by its knot signature.  It follows that, for each alternating knot $K$, its $V_i(K)$ invariants are equal to those of some $T(2,n)$ torus knot, determined by $\sigma(K)$:

\begin{lemma} \label{lem:VforT(2,k)}
If $K = T(2,2k+1)$, then 
\[V_i(K) = \begin{cases}
\hfil \frac{k}{2}  - \left\lfloor \frac{i}{2} \right\rfloor & \text{ if $k$ is even and $0 \leq i<k$} \\
\frac{k+1}{2} - \left\lceil \frac{i}{2} \right\rceil & \text{ if $k$ is odd and $0 \leq i<k$} \\
\hfil 0 & \text{ if $i\geq k$ }.
\end{cases}\]
\end{lemma}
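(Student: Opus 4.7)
The plan is to reduce the lemma to the formula for $V_i$ of an alternating knot stated in the example early in Section~\ref{sec:hf}: for an alternating knot $K$ with $\sigma(K) < 0$,
\[
V_i(K) = \max\!\left\{ \left\lfloor \frac{-\sigma(K) + 2(1-i)}{4}\right\rfloor,\, 0\right\}.
\]
First I would record that $T(2,2k+1)$ is alternating, which is clear from its standard closed $2$-braid diagram. Next I would note that $\sigma(T(2,2k+1)) = -2k$; this follows from the $2k\times 2k$ tridiagonal Seifert matrix $V$ of the standard genus-$k$ Seifert surface, since the symmetrized form $V+V^{T}$ is strictly diagonally dominant with negative diagonal entries and is therefore negative definite of rank $2k$.

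Substituting $\sigma = -2k$ into the alternating formula yields
\[
V_i(T(2,2k+1)) \;=\; \max\!\left\{\left\lfloor \tfrac{k+1-i}{2}\right\rfloor,\, 0\right\}.
\]
It remains to check that this expression agrees, case by case, with the piecewise formula claimed in the lemma. For $i\geq k$ the floor is nonpositive, so $V_i = 0$, matching the third case. For $0\leq i< k$, I would carry out the elementary simplification of $\lfloor (k+1-i)/2\rfloor$ by splitting on the parities of $k$ and $i$: when $k$ is even, a direct check in both sub-cases $i=2m$ and $i=2m+1$ gives $k/2 - \lfloor i/2\rfloor$; when $k$ is odd, the analogous check gives $(k+1)/2 - \lceil i/2\rceil$.

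There is no real obstacle. The one ingredient requiring outside input is the alternating-knot formula for $V_i$ quoted in Section~\ref{sec:hf} (originating in the work of Ozsv\'ath--Szab\'o and Petkova cited at the opening of this section); everything else is immediate from the classical signature computation and elementary arithmetic with the floor function. The lemma is really a convenient restatement, tuned so that later examples can read off $V_i(K)$ for an alternating $K$ by first computing its signature and then quoting the corresponding $T(2,2k+1)$.
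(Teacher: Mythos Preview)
Your proposal is correct and is precisely the argument the paper has in mind: the paper does not spell out a proof of this lemma, but the preceding sentence together with the alternating-knot example in Section~\ref{sec:hf} make clear that the intended derivation is exactly the substitution $\sigma(T(2,2k+1))=-2k$ into the formula $V_i(K)=\max\bigl\{\lfloor(-\sigma(K)+2(1-i))/4\rfloor,0\bigr\}$ followed by the elementary parity check you describe.
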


\begin{proposition} \label{prop:alt}
Suppose that $K$ is an alternating knot.  If $K$ can be unknotted with a negative twist of linking number $l>0$, then $l\in \{1,2,3, 4\}$.
\end{proposition}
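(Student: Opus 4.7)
The plan is to combine the staircase shape of $V_i$ for alternating knots with Theorem~\ref{thm:vals}, which rules out $l \geq 6$, and Corollary~\ref{cor:V}, which handles the borderline case $l = 5$. From the alternating example in Section~\ref{sec:hf} I first record the staircase identity
\[
V_i(K) \;=\; \bigl\lceil \tfrac{1}{2}\max\{\nu^+(K)-i,\,0\}\bigr\rceil,
\]
valid for every alternating $K$ (when $\sigma(K) \geq 0$ both sides are $0$, and when $\sigma(K) < 0$ this follows from rewriting the formula of the alternating example using $\nu^+(K) = \lfloor -\sigma(K)/2\rfloor$). In particular $V_0(K) = \lceil \nu^+(K)/2\rceil$, so the entire staircase is determined by $\nu^+(K)$. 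Now assume $K$ is alternating and admits an unknotting negative twist of linking number $l$, and write $l = 2\alpha+1$ or $l = 2\beta+2$.

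For odd $l = 2\alpha+1$ with $\alpha \geq 3$, Theorem~\ref{thm:vals} demands $V_0(K) = \alpha(\alpha+1)/2$ and $V_{(\alpha-1)l}(K) = 1$. The first equality combined with the staircase gives $\nu^+(K) \leq \alpha(\alpha+1)$, while $(\alpha-1)l = 2\alpha^2-\alpha-1 > \alpha(\alpha+1)$ holds precisely when $\alpha \geq 3$, so the staircase forces $V_{(\alpha-1)l}(K) = 0$, contradicting $V_{(\alpha-1)l}(K) = 1$. For even $l = 2\beta+2$ with $\beta \geq 2$, Theorem~\ref{thm:vals} demands $V_{l/2}(K) = \beta(\beta+1)/2$ and $V_{3l/2}(K) = \beta(\beta-1)/2$; the first bounds $\nu^+(K) \leq (\beta+1)^2$, and the staircase then gives $V_{3l/2}(K) \leq (\beta+1)(\beta-2)/2 = \beta(\beta-1)/2 - 1$, contradicting the second. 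Together these rule out all $l \geq 6$.

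It remains to eliminate $l = 5$. Theorem~\ref{thm:vals} requires $V_0 = 3,\; V_5 = 1,\; V_{10} = 0$, and the staircase is compatible with these values precisely when $\nu^+(K) = 6$, producing the full sequence $V_0 = V_1 = 3$, $V_2 = V_3 = 2$, $V_4 = V_5 = 1$, $V_i = 0$ for $i \geq 6$. I then apply Corollary~\ref{cor:V} with $n = l^2+1 = 26$ and $\beta = 13$: for each $i \in \{0,\dots,13\}$ it prescribes $V_{j(i)}(J') = s(i) - V_i(K)$ for an auxiliary knot $J'$, where $j(i) = [5i+13]_{26}$ is a bijection onto $\{0,\dots,13\}$. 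A direct computation at $i = 1$ gives $j(1) = 8$ with $V_8(J') = 3 - 3 = 0$, while $i = 6$ gives $j(6) = 9$ with $V_9(J') = 1 - 0 = 1$, violating the monotonicity $V_8(J') \geq V_9(J')$.

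The main obstacle is this last step: unlike $l \geq 6$, the case $l = 5$ cannot be excluded using only the values $V_{kl}(K)$ of Theorem~\ref{thm:vals}, and one must invoke the finer surgery relation of Section~\ref{sec:hf3} and catch the failure of monotonicity in the resulting $V_{j(i)}(J')$ sequence.
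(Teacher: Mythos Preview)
Your proof is correct and follows essentially the same two-stage strategy as the paper: rule out $l\ge 6$ by confronting the staircase shape of the $V_i$ for alternating knots with the quadratic prescriptions of Theorem~\ref{thm:vals}, and then eliminate $l=5$ via the finer surgery relation of Corollary~\ref{cor:V}. Your execution of the first stage, bounding $\nu^+$ from $V_0$ (resp.\ $V_{l/2}$) and then reading off that a later prescribed value must be too small, is a tidier variant of the paper's ``linear versus quadratic decrease'' comparison, and your $l=5$ computation matches the paper's reference to the method of Example~\ref{ex:3,17}.
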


\begin{proof}
The proof follows from the following observation: Lemma \ref{lem:VforT(2,k)} implies that the $V_i$ values decrease linearly, while Theorem \ref{thm:vals} implies that the $V_i$ values decrease quadratically.  More precisely, fix a linking number $l>0$.  From Lemma \ref{lem:VforT(2,k)} we have that, for $i\geq 0$ and $i+l\leq k$, 
\begin{equation} \label{eq:vi diff} V_i - V_{i+l} \in \left\{ \left\lfloor \frac{l}{2} \right\rfloor, \left\lfloor \frac{l}{2} \right\rfloor +1 \right\}. \end{equation}

From Theorem \ref{thm:vals}, we have that if $l$ is odd, $j\geq 0$, and $j+1\leq \frac{l-1}{2}$, then 
	\[V_{jl}-V_{jl+l} = \frac{l-1}{2}-j.\]
If, in addition, $jl+l\leq k$, then we can apply Equation \eqref{eq:vi diff} to see that $j=0$. This implies that Theorem \ref{thm:vals} can determine at most 2 nonzero $V_i(K)$ values.  Thus $l \in \{1,3,5\}$.
We repeat this process when $l$ is even.  From Theorem \ref{thm:vals}, if $l$ is even, $j\geq 0$, and $j+1\leq \frac{l-2}{2}$, then 
	\[V_{\left(j+\frac{1}{2}\right)l}-V_{\left(j+\frac{1}{2}\right)l+l} = \frac{l-2}{2}-j.\]  If, in addition, $\left(j+\frac{1}{2}\right)l+l\leq k$, then  we can apply Equation \eqref{eq:vi diff} and reach a contradiction for all values of $l$.
	
Thus, if Theorem \ref{thm:vals} determines at least two nonzero $V_i(K)$ values, or if it determines one nonzero value in addition to determining that $V_k(K) = 0$, then it must be that $l = 1, 3,$ or 5.  The remaining cases are when Theorem \ref{thm:vals} determines
\begin{enumerate}

\item one nonzero value and one $V_i(K) = 0$ where $i>k$.

\item only one value.

\end{enumerate}	
In the notation of Theorem \ref{thm:vals}, these are the cases where $\alpha, \beta = 1$ or $0$ respectively, or $l = 1, 2, 3, $ or $4$.  Therefore we must have that $l \in \{1,2,3,4,5\}$.

Note that if $l = 5$, then Theorem \ref{thm:vals} implies that $V_0(K) = 3$.  Comparing this to Lemma \ref{lem:VforT(2,k)}, we conclude that $V_i(K) = V_i(T(2, 13))$.  In this situation, a computation similar to that in Example \ref{ex:3,17} yields a contradiction.  Thus an alternating knot cannot be unknotted with a negative twist of linking number 5.
\end{proof}

Combining this result with those of the previous sections, we can narrow down the possible values in $\mathcal{U}(K)$ for $K$ an alternating knot.
\begin{theorem} \label{thm:alt table}
Suppose that $K$ is an alternating knot, then $\mathcal{U}(K)$ is a subset of one of the following, determined by $\sigma(K)$.
\[
\begin{array}{c|c} % brackets may be (...), [...], \{...\}, or left out
	\sigma(K) &  \\
	\hline
	0 & \{2^-, 1^-, 0^-, 0^+, 1^+, 2^+\} \\
	\hline
	\pm 2 & \{1^{\mp}, 0^\mp, 2^\pm, 3^\pm \} \\
	\hline
	 \pm 4 & \{1^{\mp},  3^\pm \} \\
	 \hline
	 \pm 6, \pm 8 & \{1^{\mp},  4^\pm \}\\
	 \hline
	 >8 & \{1^{-} \} \\
	 \hline
	 <-8& \{1^{+} \}\\
	\end{array} \]
\end{theorem}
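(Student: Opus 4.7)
The plan is to combine Proposition~\ref{prop:alt}, Theorem~\ref{thm:vals}, Corollary~\ref{cor:sigobs}, and Sato's result (the $l=0$ case of the Heegaard Floer obstruction from Section~\ref{sec:hf2}), exploiting the explicit alternating-knot formula recalled in Section~\ref{sec:hf},
\[
V_k(K) = \max\!\left\{\left\lfloor \tfrac{-\sigma(K)+2(1-k)}{4}\right\rfloor,\, 0\right\} \text{ if } \sigma(K)<0, \qquad V_k(K)=0 \text{ if } \sigma(K)\geq 0,
\]
to reduce the statement to a finite case analysis on $\sigma(K)$.

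First I would shrink the search space. Proposition~\ref{prop:alt} says $l^-\in\mathcal{U}(K)$ with $l>0$ forces $l\in\{1,2,3,4\}$, and since $-K$ is also alternating, applying the proposition to $-K$ gives the same bound for positive twists. Together with $l=0$, this yields $\mathcal{U}(K)\subseteq\{0^\pm,1^\pm,2^\pm,3^\pm,4^\pm\}$.

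Next, for each $l\in\{1,2,3,4\}$ I would feed the hypothesis $l^-\in\mathcal{U}(K)$ into Theorem~\ref{thm:vals} to extract the forced values of certain $V_k(K)$, then compare with the alternating formula above. The short computations give: $l=1$ forces $V_0(K)=0$ and hence $\sigma(K)\geq 0$; $l=2$ forces $V_1(K)=0$ and hence $\sigma(K)\geq -2$; $l=3$ forces $V_0(K)=1$ and $V_3(K)=0$, which pins down $\sigma(K)\in\{-2,-4\}$; and $l=4$ forces $V_2(K)=1$ and $V_6(K)=0$, which pins down $\sigma(K)\in\{-6,-8\}$. Applying the same calculation to $-K$ yields the mirrored conditions for $l^+\in\mathcal{U}(K)$. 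For $l=0$, Sato's theorem gives $\nu^+(K)=0$, hence $\sigma(K)\geq 0$ when $0^-\in\mathcal{U}(K)$, and $\sigma(K)\leq 0$ when $0^+\in\mathcal{U}(K)$.

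The main obstacle, and the last ingredient needed, is that these $V_k$ obstructions alone are too weak for $l=0,2$: they leave $\sigma(K)$ unbounded from above (for $l^-$) or from below (for $l^+$). To close these gaps I would invoke the signature obstructions. For $2^-\in\mathcal{U}(K)$, Corollary~\ref{cor:sigobs} at $r/l=1/2$ gives $\sigma(K)=\sigma_{1/2}(K)\in\{-2,0\}$, upgrading $\sigma(K)\geq -2$ to $\sigma(K)\in\{-2,0\}$. For $0^-\in\mathcal{U}(K)$, the $l=0$ signature corollary from Section~\ref{sec:cg} (evaluated at $r/q=1/2$) gives $\sigma(K)\in\{0,2\}$. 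No further input is needed for $l=1$: the $V_0$ obstruction together with its mirror already produces the last two lines of the table (the cases $|\sigma(K)|>8$). Assembling all of these necessary conditions against each possible value of $\sigma(K)$ is then straightforward bookkeeping and yields the table as stated.
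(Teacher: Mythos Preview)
Your argument is correct and uses the same ingredients as the paper (Proposition~\ref{prop:alt}, Theorem~\ref{thm:vals}, the alternating $V_k$ formula, and the signature obstructions of Section~\ref{sec:cg}), but the organization is different. The paper fixes $\sigma(K)$ first: it feeds $\nu^+(K)$ into the inequality of Theorem~\ref{thm:v+ bound} to list candidate values of $l$ for each $k=-\sigma(K)/2$, then prunes that list with the signature corollaries, and only at the very end invokes Theorem~\ref{thm:vals} together with Lemma~\ref{lem:VforT(2,k)} to eliminate a few stragglers ($3^\pm$ at $k=\mp3$, $4^\pm$ at $k=\mp5,\mp6$). You instead fix $l\in\{0,1,2,3,4\}$, read off the exact $V_k$ constraints from Theorem~\ref{thm:vals}, compare them to the alternating formula to pin down the compatible values of $\sigma(K)$, patch $l=0,2$ with the signature corollaries, and then invert the relation. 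Your route bypasses Theorem~\ref{thm:v+ bound} entirely and arrives at the table with less back-and-forth; the paper's route makes the role of $\nu^+$ more visible. One small redundancy: Sato's $\nu^+(K)=0$ for $l=0$ is already subsumed by the $l=0$ signature corollary you invoke afterward, so you could drop that step.
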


\begin{example}
Here we show that $K = 12a_{369}$ cannot be unknotted with a single twist.  We first note that $K$ is alternating with $\sigma(K) = 6$ so that $\mathcal{U}(K) \subseteq \{1^{-},  4^+ \}$.  Because $\text{Arf}(K) = 1$, Theorem \ref{thm:arf obs} implies that $1^- \not\in \mathcal{U}(K)$.  If $4^+ \in \mathcal{U}(K)$, then Corollary \ref{cor:sigobs} implies that $\sigma_{1/4}(K) = 5\pm 1$.  However, $\sigma_{1/4}(K) = 2$ and thus $\mathcal{U}(K) = \{\}$.
\end{example}

\begin{proof}[Proof of Theorem \ref{thm:alt table}]
Suppose that $K$ is an alternating knot. Let $k = - \sigma(K)/2$. Then for each $i$, we have that $V_i(K) = V_i(T(2,2k+1))$ and, because $-K$ is also alternating, $V_i(-K) = V_i(-T(2, 2k+1)) = V_i(T(2, 2(-k-1)+1))$.  Note that if $K$ can be unknotted with a twist of linking number $l$, then $-K$ can be unknotted with an opposite twist of the same linking number.  From Lemma \ref{lem:VforT(2,k)}, we know that 
\[\nu^+(T(2, 2k+1)) = \begin{cases}
k & \text{ if $k\geq 0$} \\
0 & \text{ if $k<0$ }.
\end{cases}\]
In the proof of Theorem \ref{thm:v+ bound}, it is shown that if $K$ can be unknotted with a negative twist of positive linking number $l$, then 
\[ \frac{ 1 + \sqrt{1 + 8{\nu^+(K)}}}{2} \le l <  \frac{ 3 + \sqrt{9 + 8{\nu^+(K)}}}{2}.  \]
In particular, this implies that if $\nu^+(K)\geq 7$, then $l>4$.  This contradicts Proposition \ref{prop:alt} and so we conclude that $k\leq 6$.  Similarly, by considering $\nu^+(-K)$, we conclude that $k\geq -6$ for $K$ to be unknotted with a positive twist of positive linking number.  Applying this bound for each value of $k$  yields a short list of possible values for both positive and negative twists.  Factoring in the signature function obstructions from Section \ref{sec:cg} (and recalling that, with our conventions, $\sigma(K) = \sigma_{1/2}(K)$), further restricts the lists.  In particular, if $\sigma(K)>2$ or $\sigma(K)<-2$, then $0^\pm \not \in \calu(K)$.  We are left with the following possibilities:
\begin{itemize}
\item If $k=0$, then $\calu(K) \subseteq \{2^-, 1^-, 0^-, 0^+, 1^+, 2^+\}$.
\item If $k=\mp1$, then $\calu(K) \subseteq \{ 1^\mp,  0^\mp, 2^\pm, 3^\pm\}$.
\item If $k=\mp2$, then $\calu(K) \subseteq \{ 1^\mp, 3^\pm\}$.
\item If $k=\mp3$, then $\calu(K) \subseteq \{ 1^\mp,  3^\pm, 4^\pm\}$.
\item If $k=\mp4$, then $\calu(K) \subseteq \{ 1^\mp,  4^\pm\}$.
\item If $k=\mp5$, then $\calu(K) \subseteq \{ 1^\mp, 4^\pm\}$.
\item If $k=\mp6$, then $\calu(K) \subseteq \{ 1^\mp, 4^\pm\}$.
\item If $k<-6$, then $\calu(K) \subset \{1^-\}$.
\item If $k>6$, then $\calu(K) \subset \{1^+\}$.
\end{itemize}
Finally, combining Theorem \ref{thm:vals} with Lemma \ref{lem:VforT(2,k)}, we can rule out $3^\pm$ for $k=\mp3$ and $4^\pm$ for $k = \mp5, \mp 6$.
\end{proof}

For large signature, Theorem \ref{thm:arf obs} implies a slightly stronger result.

\begin{corollary}
If $K$ is an alternating knot with $| \sigma (K)| >8$ and $\text{Arf}(K) = 1$, then $K$ cannot be unknotted with a single twist.
\end{corollary}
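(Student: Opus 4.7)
The plan is to combine Theorem \ref{thm:alt table} with the Arf invariant obstruction in Theorem \ref{thm:arf obs}. Since $K$ is alternating with $|\sigma(K)| > 8$, the table in Theorem \ref{thm:alt table} forces
\[
\calu(K) \subseteq \{1^-\} \quad \text{if } \sigma(K) > 8, \qquad \calu(K) \subseteq \{1^+\} \quad \text{if } \sigma(K) < -8.
\]
Thus, in either case, the only candidate unknotting twist has linking number $l = 1$, which is odd.

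Next I would invoke Theorem \ref{thm:arf obs}. Since $\text{Arf}(K) = c(K) = 1$, any unknotting twist of odd linking number $l$ must satisfy $l \equiv \pm 3 \pmod{8}$. But $l = 1 \not\equiv \pm 3 \pmod{8}$, yielding an immediate contradiction. Hence no single twist can unknot $K$, and $\calu(K) = \varnothing$.

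There is essentially no obstacle here; the work has already been done in assembling Theorem \ref{thm:alt table}, which in the large-signature regime collapses the set of candidate twists to the single possibility $l = 1$, and in establishing the Arf-based congruence obstruction for odd linking numbers. The corollary is really just the observation that these two results intersect to the empty set whenever $|\sigma(K)| > 8$ and $\text{Arf}(K) = 1$.
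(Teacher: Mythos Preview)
Your proof is correct and follows exactly the approach intended by the paper: Theorem~\ref{thm:alt table} reduces the candidate set to $\{1^\pm\}$ when $|\sigma(K)|>8$, and Theorem~\ref{thm:arf obs} then eliminates $l=1$ since $1\not\equiv\pm 3\pmod 8$ when $\text{Arf}(K)=1$.
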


%%%%%%%Section%%%%%%%%%%%%%%

\section{Comments and Questions}\label{sec:questions}

\begin{enumerate}
\item A census of prime knots with up to 8 crossings reveals there are only eight knots such $K$ for which $\calu(K)$ is completely known.    The first  example with unknown values  is $\calu(5_2)$.  This knot has signature $-2$ and Theorem \ref{thm:alt table} implies that $\calu(5_2) \subseteq \{1^+, 0^+, 2^-, 3^-\}$. We have that Arf$(5_2) = 0$ and Theorem \ref{thm:arf obs} implies that $3^-\not\in \calu (5_2)$. Because $5_2$ has unknotting number 1, realized by a positive-to-negative crossing change, we know that $\{0^+, 2^-\} \subseteq \calu(5_2)$.  Thus the only remaining unknown value is $1^+$.

\item Perhaps the most basic open question about which sets occur as unknotting sets is the following:  Does there exist a nontrivial knot $K$ with  $\calu(K) = \{2^-, 1^-, 0^-, 0^+, 1^+, 2^+\}$?

\item Our results are based primarily on knot invariants that are related to four-manifolds in some way.  As of yet, three-manifold techniques have provided little access to solving the  problem of determining $\calu(K)$ for individual knots.  On the other hand, they seem well-suited for addressing more geometric questions, for instance related to primeness, and for working with families of knots; some examples of this are included in~\cite{MR1355072,MR1181171,MR2016983,MR2107139}.

\item As is evident from our work here, the case of linking number one is especially challenging.    This challenge is related to the difficulty of finding invariants related to homology three-spheres, as opposed to rational homology spheres.  We expect that a continued study of this linking number one problem will bring new focus on particular problems related to homology three-spheres.

\item Ohyama's theorem~\cite{MR1297516} states that any knot can be unknotted with two twists.  A closer look at his construction shows that the linking numbers are consecutive integers.   With more care it can be seen that Ohyama's proof yields the following.
\begin{theorem}  For every integer $l \ge 0$ and knot $K$, it is possible to unknot $K$ with  a pair of oppositely signed twists of linking numbers $l$ and $l+1.$
\end{theorem}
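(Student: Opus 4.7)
By Ohyama~\cite{MR1297516}, every knot $K$ can be unknotted by two twists, and~\cite{livingston2019nullhomologous} refines this by showing that the pair of linking numbers may be taken to be any consecutive pair $(l, l+1)$ with $l \ge 0$.  The present theorem asserts additionally that the two signs can simultaneously be taken to be opposite.  The plan is to track the signs through Ohyama's original construction and through the linking-number refinement, and to verify that the sign data decouples from the linking-number data.

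Ohyama's construction starts from a Seifert surface $F$ for $K$ and produces the two unknotted twist circles $C_1, C_2$ by a geometric procedure on the bands of $F$.  The sign of each twist is determined by the orientation of an associated band move, and the two orientation choices are independent, so the signs of the resulting twists are freely selectable in the base construction.  The linking-number refinement of~\cite{livingston2019nullhomologous} adjusts the linking numbers $\text{link}(C_i,K)$ by inserting into $S^3\setminus K$ canceling pairs of twists on small circles encircling parallel, oppositely oriented arcs of $K$, and absorbing them into $C_1$ and $C_2$ via handle slides in the surgery diagram.  The essential observation is that a canceling pair can be inserted with either choice of relative signs; absorbing it into $C_i$ adjusts $\text{link}(C_i,K)$ by $\pm 1$ without constraining the final sign $\epsilon_i$.

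Assembling these observations, the proof proceeds as follows: apply Ohyama's construction, choosing the two orientations so that the signs are opposite; then apply the stabilization step of~\cite{livingston2019nullhomologous} iteratively, with the signs of each canceling insertion selected so as to preserve the opposite-sign condition, until the linking numbers become $(l,l+1)$.  The main obstacle, and the ``closer look'' at Ohyama's argument required by the theorem, is verifying that the two sign choices in the base construction are genuinely independent of the linking-number data, and that each stabilization can be performed with the signs of the two twists held fixed.  This reduces to a Kirby-calculus check: surgery on $C_1 \cup C_2$ with framings $(\epsilon_1,\epsilon_2)$ returns $S^3$ with the image of $K$ an unknot, and one must check that the handle slides effecting each linking-number shift preserve these framings and the unknottedness of the image.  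Once this bookkeeping is in place, the theorem follows.
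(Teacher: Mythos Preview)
The paper does not actually supply a proof of this theorem.  The entire argument in the paper is the single sentence preceding the statement: ``With more care it can be seen that Ohyama's proof yields the following.''  So there is no detailed paper-side proof to compare against; the paper simply asserts that tracking signs through Ohyama's construction gives the result.

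Your proposal is in the same spirit the paper points to --- go back to Ohyama and follow the construction with attention to the signs --- so at the level of strategy you are aligned with what the authors have in mind.  That said, what you have written is a plan rather than a proof, and a few of the specifics look speculative.  Your description of Ohyama's construction (``starts from a Seifert surface,'' ``band moves,'' signs ``determined by the orientation of an associated band move'') reads as a guess at the mechanism rather than a report of what Ohyama actually does; likewise, your account of the linking-number refinement in~\cite{livingston2019nullhomologous} as ``inserting canceling pairs of twists and absorbing them via handle slides'' is plausible-sounding but not obviously what that paper does.  The heart of the matter --- that the two signs in Ohyama's base construction are genuinely independent and can be set opposite --- is exactly the assertion that needs checking, and in your write-up it is simply declared.

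In short: you have correctly identified the route the paper intends, but to turn this into a proof you must actually open Ohyama's paper and verify the sign independence claim there, rather than describing in general terms what such a verification would look like.  As written, the proposal defers the one nontrivial step to a ``Kirby-calculus check'' that is never performed.
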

\noindent The results concerning signatures presented in this paper can be   generalized to show that if the   $l+1$ in the statement of the theorem is replaced with $l+k$ for any $k>1$, then it is no longer true.   On the other hand, for a fixed pair $(l_1, l_2)$, we are unable to either offer a generalization or find an obstruction.  For instance, the following statement is possibly true:  {\it Every knot $K$ can by unknotted by a pair of oppositely signed twists of linking numbers $3$ and $5$.}  (Here, 3 and 5 could be replaced by any relatively prime pair.)
\item  The problem of determining whether a given knot has unknotting number one has been resolved for all prime knots of 10 or fewer crossings.  There are 27 knots of 11 or fewer crossings, out of a total of 801 knots, for which it is unknown.  Note that saying that a knot $K$ has unknotting number one implies that $0 \in \calu(K)$ (with some choice of sign) but not conversely.    A good but lengthy project is to review the calculations that went into determining the unknotting numbers to identify knots of low crossing number for which the question of whether $0 \in \calu(K)$ is unresolved.

\end{enumerate}

%%%%%%%APPENDIX   A%%%%%%%%%%%%%%

\appendix

\section{Signatures of torus knots}

Figure~\ref{fig:signature} illustrates Litherland's description of the signature function of $T(p,q)$   in the case of $p=5, q=7$.  In a rectangle with vertices $(0,0)$, $(q,0)$, $(0,p)$, and $(q,p)$, line segments are drawn:  one from $(qx, 0)$ to $(0,px)$, and the other from $(qx,p)$ to $(q, px)$.   The signature at $\omega = e^{2\pi i x}$ is given be counting the number of lattice points interior to the two triangular regions ($C_1$ and $C_2$) and subtracting the number of lattice points in the interior of the remaining region.  In the illustration we have $x = 0.6$ and find ${\sigma}_x(T(p,q)) = 3 + 1 - 20 = -16$.  By symmetry, we can focus on the range $0 \le x \le \frac{1}{2}$.

\begin{figure}[ht]
%%%%% USE PINLABEL TO INCLUDE LABELS IN GRAPHIC
\labellist
\pinlabel {\text{\small{$px$}}} at -35 280
\pinlabel {\text{\small{$p=5$}}} at -70 470
\pinlabel {\text{\small{$qx$}}} at 370 -30
\pinlabel {\text{\small{$q=7$}}} at 700 -30
\endlabellist
\includegraphics[scale=.2]{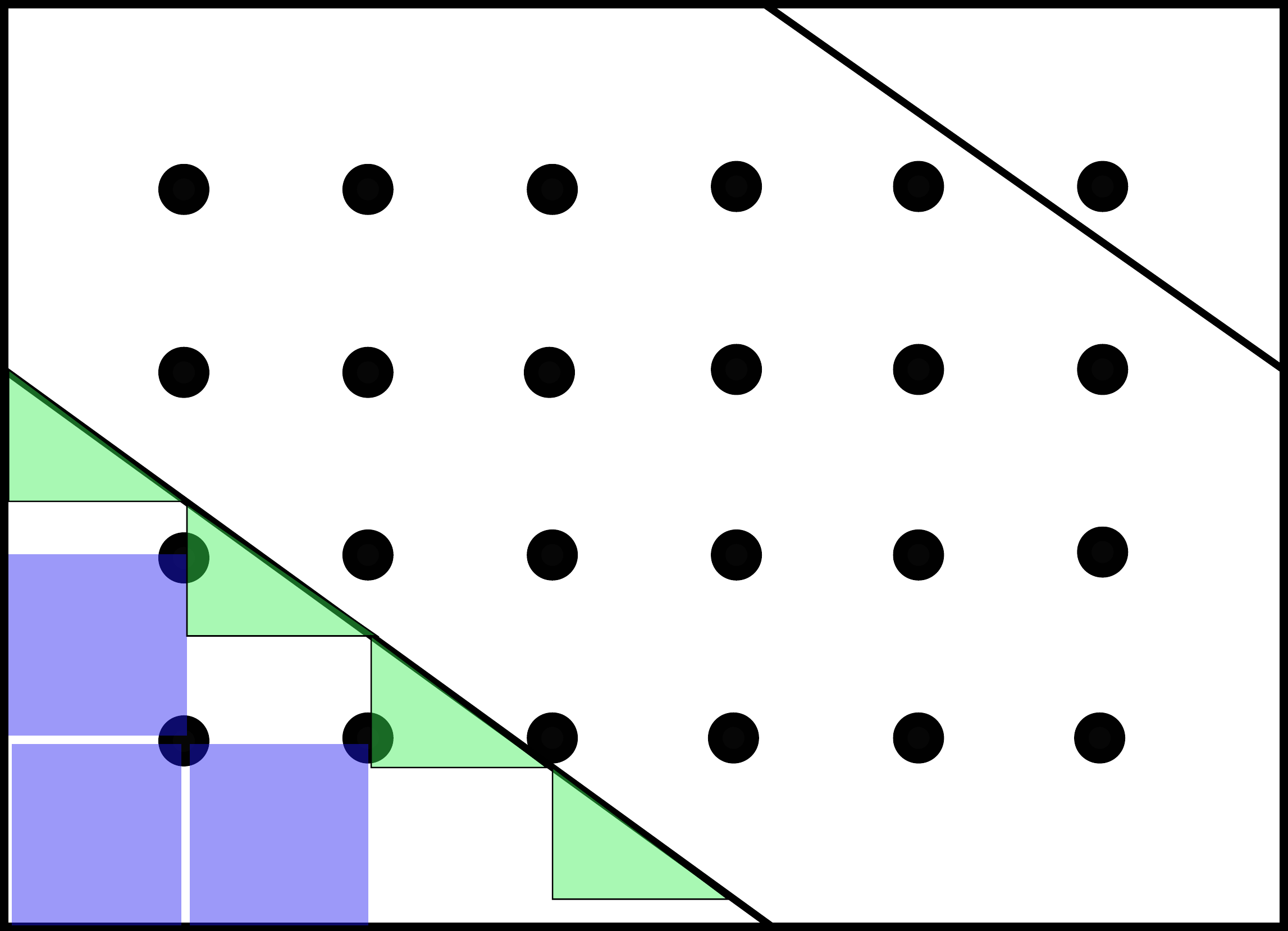}
\vskip.2in

\caption{Signature Count}
\label{fig:signature}
\end{figure}

It will be simpler to work with negative signature, which we denote 
\[
\ns = -  {\sigma}_x(T(p,q)).
\] 
Fix a choice of $p$, $q$, and $x$.  We call the lower and upper triangle counts $\#C_1$ and $\#C_2$, and the remaining count by $\#C_3$, so that $\#C_1 + \#C_2 + \#C_3 = (p-1)(q-1)$.  We then have

\begin{theorem}[\bf Exact Count]

\begin{equation}\label{eqn:sig1}
\ns   =  (p-1)(q-1) - 2(\#C_1 +\#C_2). 
\end{equation}
\end{theorem}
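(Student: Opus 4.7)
The plan is to observe that the claimed identity is essentially a bookkeeping consequence of Litherland's description together with a count of the total number of lattice points in the rectangle. I would proceed as follows.

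First I would recall the total count: the interior of the rectangle with vertices $(0,0), (q,0), (0,p), (q,p)$ contains exactly $(p-1)(q-1)$ lattice points, namely those of the form $(i,j)$ with $1 \le i \le q-1$ and $1 \le j \le p-1$. Since $C_1$, $C_2$, and $C_3$ are by definition the interiors of the three regions cut out by the two diagonal segments, any lattice point of the rectangle that is not counted in $\#C_1+\#C_2+\#C_3$ must lie on one of the two segments. Hence
\begin{equation*}
\#C_1 + \#C_2 + \#C_3 \ \le \ (p-1)(q-1),
\end{equation*}
with equality precisely when no lattice point lies on either segment. A short calculation shows that the segments have equations $px + qy = pqx$ and $px - qy = pqx - pq$; an interior lattice point $(i,j)$ therefore lies on one of them only if $pqx$ is an integer of the appropriate form, which is a measure-zero condition on $x$. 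Since the signature function of $T(p,q)$ is constant on the complement of a finite set of arguments and is conventionally defined by the average of its one-sided limits at the jump points, I would restrict to $x$ avoiding these finitely many bad values, so that $\#C_1+\#C_2+\#C_3 = (p-1)(q-1)$.

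Next, I would apply Litherland's description as quoted immediately above the theorem: for such $x$,
\begin{equation*}
\sigma_x(T(p,q)) \ = \ \#C_1 + \#C_2 - \#C_3,
\end{equation*}
so that $\ns = -\sigma_x(T(p,q)) = \#C_3 - \#C_1 - \#C_2$. Substituting $\#C_3 = (p-1)(q-1) - \#C_1 - \#C_2$ from the total count yields
\begin{equation*}
\ns \ = \ (p-1)(q-1) - 2(\#C_1 + \#C_2),
\end{equation*}
which is the claimed identity.

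There is no real obstacle here; the only mildly delicate point is justifying that the total count $(p-1)(q-1)$ is split cleanly among the three open regions, i.e.\ that one may ignore the finite set of $x$ at which a lattice point lies on a boundary segment. This is handled by noting that both sides of the identity are constant on the complement of a finite set and continuous from the relevant side, so it suffices to prove the formula on the generic locus. With that remark the theorem reduces to the one-line algebraic manipulation above.
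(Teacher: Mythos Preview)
Your argument is correct and matches the paper's approach: the paper simply records, just before the theorem statement, that $\#C_1 + \#C_2 + \#C_3 = (p-1)(q-1)$ and then invokes Litherland's formula $\sigma_x(T(p,q)) = \#C_1 + \#C_2 - \#C_3$, so the identity is immediate. Your additional care about lattice points on the segment boundaries (restricting to generic $x$) is a reasonable elaboration that the paper leaves implicit; one small notational quibble is that you use $x$ both for the parameter and for a coordinate in the segment equations, which you may want to clean up.
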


The simplest estimate for the signature function results from a consideration of areas.  The (blue) shaded squares in Figure~\ref{fig:signature} illustrate that the count $\#C_1$ is the sum of the areas of squares, and this sum approximates  the area of the lower triangle.  In general, if we approximate  the counts, $\#C_1$ and $\#C_2$, as well as  the count of the lattice points in the complementary region, by the areas of the regions, we arrive at the following.

\begin{theorem}[\bf Approximation]
\begin{equation}\label{eqn:sigest2}
\ns \approx   pq  -   pqx^2 -   pq (1-x)(1 - x)  =  2pqx(1-x).
\end{equation}
\end{theorem}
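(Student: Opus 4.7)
The plan is to take the Exact Count formula
\[
\ns = (p-1)(q-1) - 2(\#C_1 + \#C_2)
\]
and replace each lattice-point count by the area of the region it sits inside, with the understanding that for $p,q$ moderately large and $x\in(0,\tfrac12]$ fixed, the error from replacing a lattice-point count over a lattice-aligned region by its Euclidean area is of lower order. No identity is being asserted here, only an approximation, so there is no inequality to optimize.

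First I would identify the two triangular regions. The lower triangle $C_1$ has vertices $(0,0)$, $(qx,0)$, $(0,px)$, so its area is $\tfrac{1}{2}(qx)(px)=\tfrac{1}{2}pqx^2$. The upper triangle $C_2$ has vertices $(qx,p)$, $(q,p)$, $(q,px)$, so its area is $\tfrac{1}{2}(q-qx)(p-px)=\tfrac{1}{2}pq(1-x)^2$. Thus
\[
\#C_1+\#C_2 \;\approx\; \tfrac{1}{2}pqx^2 + \tfrac{1}{2}pq(1-x)^2.
\]
Second, I would approximate $(p-1)(q-1)$ by $pq$, the total area of the bounding rectangle, which amounts to discarding the boundary contribution of order $p+q$.

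Substituting into the Exact Count,
\[
\ns \;\approx\; pq - pqx^2 - pq(1-x)^2.
\]
Finally, a one-line algebraic simplification gives
\[
pq\bigl[\,1 - x^2 - (1-x)^2\,\bigr] \;=\; pq\bigl[\,2x - 2x^2\,\bigr] \;=\; 2pq\,x(1-x),
\]
which is the asserted formula.

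The only real subtlety, and the step I would flag as the main thing to be careful about rather than a genuine obstacle, is tracking the size of the error: replacing the lattice-point count in each of the three regions ($C_1$, $C_2$, and the complement) by its area introduces an error controlled by the perimeter, i.e.\ $O(p+q)$, while the leading term is of order $pq$. This is why the statement is written with ``$\approx$'' and not equality, and why no further bookkeeping on integer parts is required in the argument.
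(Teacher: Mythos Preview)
Your proposal is correct and follows essentially the same approach as the paper: the paper's justification (given in the text immediately preceding the theorem rather than in a formal proof) is simply to replace the lattice-point counts $\#C_1$, $\#C_2$, and $\#C_3$ by the areas of the corresponding regions, which is exactly what you do. Your version is slightly more explicit in computing the triangle areas and in separating the step $(p-1)(q-1)\approx pq$, and your remark on the $O(p+q)$ error is a helpful addition not spelled out in the paper.
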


We need to improve this to find a precise  lower bound for  the  $\ns$.  To do so, we can subtract the areas of the two triangles from the total number of  lattice points, $(p-1)(q-1)$, yielding the next result.

\begin{theorem}[\bf Lower Approximation] \label{thm:siglower} 
\[
\ns > (p-1)(q-1) -  pqx^2  - pq (1-x)(1 - x)  .
\]
\end{theorem}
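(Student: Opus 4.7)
My plan is to use the Exact Count from the previous theorem to reduce the claim to a strict lattice-point-versus-area inequality for each of the two triangles $C_1$ and $C_2$. By the Exact Count,
\[ \ns = (p-1)(q-1) - 2(\#C_1 + \#C_2), \]
so the desired inequality is equivalent to the strict area bound
\[ \#C_1 + \#C_2 < \tfrac{1}{2}pqx^2 + \tfrac{1}{2}pq(1-x)^2 = \operatorname{area}(C_1) + \operatorname{area}(C_2). \]
Since the central reflection $(X,Y)\mapsto(q-X,p-Y)$ of the ambient rectangle carries $C_2$ to a triangle congruent to $C_1$ with $x$ replaced by $1-x$, it suffices to establish $\#C_i<\operatorname{area}(C_i)$ for $i=1$.

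For the triangle $C_1$ with vertices $(0,0)$, $(qx,0)$, $(0,px)$, the interior lattice points are precisely those $(m,n)$ with $m,n\geq 1$ and $pm+qn<pqx$. I would assign to each such point the closed unit square $Q_{m,n}=[m-1,m]\times[n-1,n]$ and check on the four corners, using convexity of $C_1$, that $Q_{m,n}\subset C_1$; since these squares are pairwise interior-disjoint, summing their unit areas gives the non-strict bound $\#C_1\leq\operatorname{area}(C_1)$.

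To upgrade this to a strict inequality I would apply a shrinkage argument. Because $\#C_1$ is finite and each counted lattice point satisfies $pm+qn<pqx$ strictly, there is a single $\epsilon>0$ with $pm+qn\leq pq(x-\epsilon)$ for every interior lattice point of $C_1$. Then every $Q_{m,n}$ lies in the shrunken triangle $C_1^\epsilon$ with vertices $(0,0)$, $(q(x-\epsilon),0)$, $(0,p(x-\epsilon))$, yielding
\[ \#C_1 \leq \operatorname{area}(C_1^\epsilon) = \tfrac{1}{2}pq(x-\epsilon)^2 \;<\; \tfrac{1}{2}pqx^2. \]
Repeating the argument for $C_2$ via the central symmetry and adding the two strict inequalities completes the proof.

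The main obstacle is essentially bookkeeping; the only mild subtlety is the case in which the hypotenuse of $C_1$ passes through one or more lattice points (when $pqx$ is an integer). Those points lie on the boundary of $C_1$ and so are excluded from $\#C_1$ by the interior convention, and every point still counted satisfies $pm+qn<pqx$ strictly, so the uniform $\epsilon$ still exists and the shrinkage argument goes through unchanged.
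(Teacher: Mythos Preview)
Your proof is correct and follows the same approach as the paper, which treats the result as essentially immediate from the unit-square packing illustrated in Figure~\ref{fig:signature} (each interior lattice point of $C_i$ contributes a unit square lying inside the triangle). The paper does not spell out the strictness, so your shrinkage argument is a welcome addition; the only minor point is that at an endpoint like $x=0$ one of the two triangles is degenerate and the strict inequality $\#C_1<\operatorname{area}(C_1)$ fails there, but the other triangle still gives strictness so the summed inequality remains strict.
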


\begin{theorem}\label{thm:signsigma} If $0 \le x < \frac{1}{pq}$ then $\ns = 0$.  If  $\frac{1}{pq}< x < \frac{1}{2}$ then $\ns > 0$.  
\end{theorem}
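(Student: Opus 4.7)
For the first claim, the plan is to apply the Exact Count $\ns = (p-1)(q-1) - 2(\#C_1 + \#C_2)$. When $0 \le x < 1/(pq)$, we have $pqx < 1 < p+q \le pi+qj$ for every interior $(i,j)$, so $\#C_1 = 0$. For $\#C_2$, note that $pq < pq(1+x) < pq+1$ and that no interior $(i,j)$ satisfies $pi+qj = pq$ (otherwise $p \mid qj$ would force $p \mid j$, impossible for $1 \le j \le p-1$). Hence $\#C_2$ equals the number of interior $(i,j)$ with $pi+qj > pq$. The involution $(i,j) \mapsto (q-i, p-j)$ sends $pi+qj$ to $2pq - (pi+qj)$ and therefore puts $\{pi+qj > pq\}$ and $\{pi+qj < pq\}$ in bijection. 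These two sets partition the $(p-1)(q-1)$ interior lattice points, so $\#C_2 = (p-1)(q-1)/2$ and the Exact Count yields $\ns = 0$.

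For the second claim, the plan is to analyze $\ns(x)$ as a step function whose only jumps occur at rational $x = k/(pq)$. A jump is $+2$ precisely when a lattice point exits $C_2$, i.e.\ when $pq+k = pi+qj$ has an interior solution, and $-2$ when a lattice point enters $C_1$, i.e.\ when $k = pi+qj$ does. Just past $x = 1/(pq)$, the single interior point with $pi+qj = pq+1$ (which exists because $1$ is coprime to both $p$ and $q$) exits $C_2$, giving $\ns = 2$. Since $pi+qj \ge p+q$ for every interior $(i,j)$, no $C_1$-entry can occur before $x = (p+q)/(pq)$, so $\ns$ is non-decreasing and hence $\ge 2$ on $(1/(pq), (p+q)/(pq))$. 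This already covers the easy cases where $(p+q)/(pq) \ge 1/2$, namely $T(2,q)$, $T(3,4)$, and $T(3,5)$.

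For the remaining cases (those with $pq > 2(p+q)$, where both kinds of transitions occur in $(1/(pq), 1/2)$), I would combine the Lower Approximation---which, since $\ns$ is an even integer, yields $\ns \ge 2$ on the explicit sub-interval of $[(p+q)/(pq), 1/2]$ where $2pqx(1-x) \ge p+q-1$---with a combinatorial pairing argument for the near-boundary range just past $(p+q)/(pq)$. The Sylvester-Frobenius bijection $k \leftrightarrow pq-k$ pairs each $C_1$-entry at $k$ with a $C_2$-exit at the later value $pq-k$, while the $C_2$-exits at $k \in [1, p+q-1]$ remain unpaired within $[1, pq/2]$ and contribute a strict positive surplus to the cumulative count. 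The main obstacle is verifying that this surplus dominates the long runs of consecutive $C_1$-entries that can occur at adjacent elements of the numerical semigroup $\langle p, q \rangle$; making this quantitative seems to require the fine structure of $\langle p, q \rangle$ beyond the gross Sylvester symmetry, and I expect this to be the technical heart of the argument.
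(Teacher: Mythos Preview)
Your treatment of the first claim is correct and actually cleaner than the paper's: you give a direct static count via the involution $(i,j)\mapsto(q-i,p-j)$, whereas the paper argues dynamically by locating the first jump. Your analysis of the second claim also tracks the paper's Steps~1 and~2: the first positive jump at $x=1/(pq)$ and the first negative jump at $x=(p+q)/(pq)=\tfrac{1}{p}+\tfrac{1}{q}$.

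The gap you identify, however, is illusory, and the Sylvester--Frobenius pairing you anticipate needing is unnecessary. The Lower Approximation bound $2pqx(1-x)-(p+q-1)$ is an increasing function of $x$ on $[0,\tfrac12]$, so to verify it is positive on all of $[(p+q)/(pq),\tfrac12]$ you need only check the left endpoint. Substituting $x=(p+q)/(pq)$ gives
\[
\delta \;=\; p+q-3-2\Bigl(\tfrac{p}{q}+\tfrac{q}{p}\Bigr),
\]
and an elementary estimate (for $p\ge 3$, $q\ge 5$ one has $\tfrac{p}{q}+\tfrac{q}{p}\le \tfrac{p}{5}+\tfrac{q}{3}$, whence $\delta\ge \tfrac{3}{5}p+\tfrac{1}{3}q-3>0$) shows $\delta>0$ in every case not already covered by your ``easy'' range $(p+q)/(pq)\ge\tfrac12$. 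This is precisely the paper's Step~3. Your ``explicit sub-interval where $2pqx(1-x)\ge p+q-1$'' is thus the entire interval $[(p+q)/(pq),\tfrac12]$, and the proof is complete with the ingredients you already have; no combinatorics of the numerical semigroup $\langle p,q\rangle$ is required.
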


\begin{proof}
Denote the hypotenuses of triangles $C_1$ and $C_2$ by $l_1$ and $l_2$, both of which depend on the choice of $x$.  As $x$ increases from $0$ to $\frac{1}{2}$, the negative signature increases when a lattice point lies on $l_2$ and it decreases when a lattice point lies on $l_1$.

\smallskip

\noindent{\bf Step 1:  The first positive jump in $\ns$.}   The function on the plane $\phi\co (i,j) \to ip + jq$ is constant on each line $l_2$, taking value $pq( 1 + x)$ on the general line and, in particular, taking value $pq$ on the line when $x = 0$.  To find the first positive jump, we must find the smallest $x>0$ such that  the line $l_2$ contains a lattice point    $(i,j)$ in the rectangle  for which  $\phi(i,j) > pq$.  We show that  occurs at  $ x = \frac{1}{pq}$.

We begin by using the fact that $p$ and $q$ are relatively prime:  there exists an $r$ satisfying $0 < r <q$ such that $rp + sq = 1$ for some $s$.  A simple algebraic argument shows that $-p < s <0$.

Consider the point $(i,j) = (r, p+s )$.  Notice that this is in the interior of the rectangle.  We have $\phi(i,j) = rp + (p+s)q = pq +1$.  This is clearly the least possible integer value of $ip+jq$ that is greater than $pq$.     Writing $pq +1 = pq(1 + \frac{1}{pq})$ shows that the corresponding value of $x $ is $\frac{1}{pq}$, as desired.

\smallskip

\noindent{\bf Step 2: The first negative   jump in $\ns$.}  This case is simpler.  It is evident that the first negative jump corresponds the value of $x$ for which the line $l_1$ contains the lattice point $(1,1)$.  It is a simple algebra exercise to show that the value of  $x$ is $\frac{1}{p} + \frac{1}{q}$.

\smallskip

\noindent{\bf Step 3:  $\ns > 0 $ for $x \geq \frac{1}{p} + \frac{1}{q}$.}  The proof of the theorem is completed by showing that  $\ns  > 0 $ for all $x$ satisfying $  \frac{1}{p} + \frac{1}{q} \leq x < \frac{1}{2}$.  The lower bound given in Lemma~\ref{thm:siglower} is quadratic, increasing on $[0,\frac{1}{2}]$.  If we denote that lower bound by $\beta_{p,q}(x)$, we need to check that $\delta = \beta_{p,q}( \frac{1}{p} + \frac{1}{q}) >0$.  A direct substitution and simplification yields 
\[\delta = p + q - 2(\frac{p}{q} + \frac{q}{p}) - 3.\]

If we assume that $p\ge 3$ and $q \ge 5$, then we have 
\[\delta \ge p + q - 2(\frac{p}{5} + \frac{q}{3}) - 3 = \frac{3}{5}p + \frac{1}{3} q -3 \ge 
\frac{9}{5}  + \frac{5}{3}   -3 = \frac{7}{25}.\]
The remaining cases $T(2,k)$ and $T(3,4)$ can be computed explicitly.
\end{proof}

%%%%%%%APPENDIX   B%%%%%%%%%%%%%
\section{Basic definitions related to Upsilon, $\Upsilon(K)$ and proof of Proposition~\ref{prop:upsilonVbound}}\label{app:upsilon}

Let $K$ be a knot and let $C = \cfk^-(K)$ be the Heegaard Floer knot complex for $K$.  The invariant $V_s$ can be defined to be
\[V_s(K) := -\frac{1}{2}\max\left\{ \text{gr}(x) \mid x\in H_*(C\{i\leq 0, j\leq s\}) \text{ and } U^kx\neq 0 \in H_*(C) \text{ for all } k  \right\}\]
where $gr(x)$ is the Maslov grading of $x$ and $C\{i\leq 0, j\leq s\}$ is the subcomplex of $C$ consisting of elements of Alexander filtration at most $s$ and algebraic filtration at most $0$.  This definition is equivalent to that given in \cite{MR3393360}.

In \cite{MR3667589}, Ozsv\'ath, Stipsicz, and Szab\'o define the knot invariant Upsilon $\U_K(t)$ for $t\in[0,2]$.  Let Alex($x$) denote the Alexander grading of an element in  $ \cfk^-(K)$ and let  Alg($x$) denote the algebraic grading.   Suppose that    $\mathcal{B}$ be a bifiltered basis of $\cfk^\infty (K)$.  In \cite{MR3604374}, it is shown that
\[\U_K(t) = -2 \cdot \min \{r \mid H_0(\mathcal{F}_{t,r}) \longrightarrow H_0(\cfk^\infty (K)) \text{ is surjective}\}\]
where 
$ \mathcal{F}_{t,r} $ is the subcomplex generated by the set 
\[ \left\{x\in \mathcal{B} \left| \left( \frac{t}{2} \text{Alex}(x) +\left(1-\frac{t}{2}\right) \text{Alg}(x)\right)\leq r \right. \right\}.\]
Diagrammatically, the subcomplex $\mathcal{F}_{t,r}$ is represented as a half-space with boundary line 
\[ \frac{t}{2}j+\left(1-\frac{t}{2}\right)i= r.\]  
Note that sums of elements in this half-space are in  $\mathcal{F}_{t,r}$, but might not have bifiltration levels satisfying the given constraint.  
\subsection{Relating $\Upsilon(K)$ to $V_i(K)$}

\begin{proposition} \label{prop:upsilon V bound}
Let $K$ be a knot and $g=g(K)$ be the genus of $K$.  Then for $t\in[0,2]$ and $s\geq 0$, 
\[-st-2V_s(K)\leq \U_t(K)\leq 
\begin{cases}
-gt-2V_s-2s+2g+2 & t\leq 1-\frac{s}{g} \\
gt -2V_s+2 & t\geq 1-\frac{s}{g} 
\end{cases} \]
\end{proposition}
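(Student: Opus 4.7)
The strategy is to unpack the characterization $\U_K(t) = -2\,r_{\min}(t)$, where $r_{\min}(t)$ is the smallest $r$ for which $H_0(\mathcal F_{t,r})\to H_0(\cfk^\infty(K))$ is surjective, and argue each of the two inequalities by an explicit chain-level construction in $\cfk^\infty(K)$.

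For the lower bound, let $x \in C\{i\le 0,\ j\le s\}$ be a cycle of Maslov grading $-2V_s$ whose class in $H_\ast(\cfk^-(K))$ lies in the $\mathbb F[U]$-tower, as supplied by the definition of $V_s = V_s(K)$. Then $y := U^{-V_s}x$ is a Maslov-grading-$0$ cycle in $\cfk^\infty(K)$ supported on generators with $i\le V_s$ and $j\le V_s+s$, and its homology class generates $H_0(\cfk^\infty(K))\cong \mathbb F$. Evaluating the linear form defining $\mathcal F_{t,r}$ on this support yields
\[
\tfrac{t}{2}(V_s + s) + \bigl(1-\tfrac{t}{2}\bigr)V_s \;=\; \tfrac{ts}{2}+V_s,
\]
so $y\in \mathcal F_{t,\,ts/2+V_s}$ for every $t\in [0,2]$. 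Hence $r_{\min}(t)\le ts/2+V_s$, giving $\U_K(t) \ge -st-2V_s$.

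For the upper bound, the two linear pieces in the statement are exactly the forward and backward extensions of slope $\pm g$ from the common value $g-s-2V_s+2$ at $t_\ast := 1 - s/g$:
\[
(g-s-2V_s+2)+g(t-t_\ast)=gt-2V_s+2 \quad (t\ge t_\ast),
\]
\[
(g-s-2V_s+2)+g(t_\ast-t)=-gt+2g-2s-2V_s+2 \quad (t\le t_\ast).
\]
Invoking the standard slope bound $|\U'_K(t)|\le g$ recalled in Section~\ref{sec:hf}, it therefore suffices to establish the single-point inequality $\U_K(t_\ast)\le g-s-2V_s+2$, equivalently $r_{\min}(t_\ast)\ge V_s-(g-s)/2-1$.

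The hard part, and the main obstacle, is this single-point inequality, which is an obstruction rather than a construction. My plan is to assume for contradiction a Maslov-grading-$0$ cycle $y\in \mathcal F_{t_\ast, r}$ representing the generator of $H_0(\cfk^\infty(K))$ with $r$ strictly less than $V_s-(g-s)/2-1$, multiply by $U^{V_s-1}$ to produce a cycle $U^{V_s-1}y$ of grading $-2V_s+2$ lying in $\mathcal F_{t_\ast, r-V_s+1}$, and combine this with the Seifert-genus bound $|j|\le g$ on generators of $\cfk^\infty(K)$ to force a summand of $U^{V_s-1}y$ into the box $\{i\le 0,\ j\le s\}$. This would produce a tower cycle of grading $-2V_s+2>-2V_s$ inside the region used to define $V_s$, contradicting its maximality. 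The delicate point, which accounts for the `$+2$' constant in the statement, is arranging the numerical comparison between the slope-$t_\ast$ line and the corner of the box $\{i\le 0,\ j\le s\}$ so that the contradiction is forced; carrying out this arithmetic in detail is what remains to complete the proof.
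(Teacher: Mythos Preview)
Your lower bound is correct and identical to the paper's argument.

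For the upper bound, your plan has two specific errors and is more circuitous than needed. First, the Seifert-genus bound on filtered generators of $\cfk^\infty(K)$ is $|j-i|\le g$ (the Alexander grading is bounded by the genus), not $|j|\le g$. Second, forcing \emph{a summand} of $U^{V_s-1}y$ into the box $\{i\le 0,\ j\le s\}$ does not produce a cycle there; a single summand is typically not closed, and the definition of $V_s$ requires an actual tower cycle in that subcomplex. What you need is that \emph{every} filtered-basis summand of $U^{V_s-1}y$ lies in the box, so the whole cycle does. In fact, with the correct bound $|j-i|\le g$, the half-plane constraint at $t_\ast$ together with the strip $-g\le j-i\le g$ does force $i<0$ and $j<s$ for every summand, so your contradiction can be completed---but your write-up as it stands does not say this.

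The paper's route is more direct and avoids the reduction to a single point. Rather than invoke the slope bound and argue by contradiction at $t_\ast$, the paper observes for every $t\in[0,2]$ that any grading-$0$ tower cycle must have at least one filtered-basis summand $(i,j)$ outside the box $\{i\le V_s-1,\ j\le s+V_s-1\}$ (since the maximal grading of a tower class in that box is $-2$). Combining ``$i>V_s-1$ or $j>s+V_s-1$'' with $|j-i|\le g$ and the defining inequality $\tfrac{t}{2}j+(1-\tfrac{t}{2})i\le r$ yields, after rewriting $\tfrac{t}{2}j+(1-\tfrac{t}{2})i$ as $\tfrac{t}{2}(j-i)+i$ or as $(\tfrac{t}{2}-1)(j-i)+j$, the bound
\[
r>\min\Bigl\{-\tfrac{t}{2}g+V_s-1,\ \bigl(\tfrac{t}{2}-1\bigr)g+s+V_s-1\Bigr\},
\]
which gives both linear pieces of the upper bound simultaneously, with the crossover at $t=1-s/g$. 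This is essentially the contrapositive of your contradiction argument, done uniformly in $t$, and it sidesteps the edge case $s\ge g$ (where your $t_\ast\le 0$ falls outside $[0,2]$).
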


\begin{proof}
Fix $s\geq 0$.  The maximum grading of a generator of homology in $C\{i\leq 0, j\leq s\}$ is $-2V_s$.  Thus the maximum grading of a generator in $C\{i\leq V_s-1, j\leq s+V_s-1\}$ is $-2$ and there is a generator of grading $0$ in $C\{i\leq V_s, j\leq s+V_s\}$.

In particular, if the complex $C\{i\leq V_s, j\leq s+V_s\}$ contains a grading $0$ generator   and the value of $r$ is 
\[r = \left( \frac{t}{2}\left(s+V_s\right)+\left(1-\frac{t}{2}\right)V_s\right),\]
then $\mathcal{F}_{t,r}$  contains a generator of grading $0$ and the map 
$H_0(\mathcal{F}_{t,r}) \longrightarrow H_0(\cfk^\infty (K))$ is surjective.  Thus,
\[\U_K(t)\geq -2\left( \frac{t}{2}\left(s+V_s\right)+\left(1-\frac{t}{2}\right)V_s\right) = -2V_s-ts.\]

On the other hand, since $C\{i\leq V_s-1, j\leq s+V_s-1\}$ does not contain a generator of grading $0$ (the maximum grading here is $-2$), for each $t\in[0,2]$, the minimum $r$--value in the definition of $\U_K(t)$ is such that the following system of inequalities has a (nonempty) solution
\begin{equation}\label{eq:ineq system}\begin{cases}
\frac{t}{2}j+\left(1-\frac{t}{2}\right)i\leq r, & (\text{A})\\
-g\leq j-i\leq g, & (\text{B}) \\
i>V_s-1 \text{ or } j>s+V_s-1, & (\text{C})\\
\end{cases}\end{equation}
where $g = g(K)$ is the genus of the knot $K$.  Combining inequalities \eqref{eq:ineq system}(B) and \eqref{eq:ineq system}(C), we have that if $i>V_s-1$, then
\begin{equation*}
\frac{t}{2}j+\left(1-\frac{t}{2}\right)i = \frac{t}{2}(j-i) + i > -\frac{t}{2}g +V_s-1, 
\end{equation*}
and if $j>s+V_s-1$, then 
\begin{equation*}
\frac{t}{2}j+\left(1-\frac{t}{2}\right)i= \left(\frac{t}{2}-1\right)(j-i)+j > \left(\frac{t}{2}-1\right)g +s+V_s-1.
\end{equation*}
Therefore, if the system of inequalities is to have a solution, either 
\begin{equation}\label{eq:bound 1} r> -\frac{t}{2}g +V_s-1 \end{equation}
or
\begin{equation}\label{eq:bound 2} r>\left(\frac{t}{2}-1\right)g +s+V_s-1. \end{equation}
This implies that for all $t\in[0,2]$, 
\[r>\min\left\{-\frac{t}{2}g +V_s-1, \left(\frac{t}{2}-1\right)g +s+V_s-1\right\}.\]
The two lower bounds agree when $t = 1-\frac{s}{g}$.
When $t\geq 1-\frac{s}{g}$, Inequality \eqref{eq:bound 1} gives the weaker lower bound on $r$ and we have
\[\U_K(t) = -2r < tg-2V_s+2.\]
When $t\leq 1-\frac{s}{g}$, Inequality \eqref{eq:bound 2} gives a weaker lower bound on $r$ and we have
\[\U_K(t) = -2r < -tg+2g-2s-2V_s+2. \qedhere\]
\end{proof}

%%%%%%%APPENDIX   C%%%%%%%%%%%%%

\section{Results for knots with up to eight crossings}
In Figure \ref{fig:table}, we summarize our result for knots with up to eight crossings.   We find eight knots for which there are no remaining unknown values and 14 knots for which there are no known values.  Candidates for knots with six unknotting indices are $8_3$, $8_9$, and $8_{20}$.

\begin{figure}[h]
\begin{center}
\begin{tabular}{ |c|c|c| } 
 \hline
Knot & Known values & Unknown values\\
 \hline
 $3_1$ & $3^-, 2^-, 0^+$ & \\
 \hline
 $4_1$ & $2^-, 0^-, 0^+, 2^+$ & \\
 \hline
 $5_1$ & $3^-$ & \\
 \hline
 $5_2$ & $2^-, 0^+$ & $1^+$\\
 \hline
 $6_1$ & $2^+, 0^-$ & $2^-, 1^-, 1^+$\\
 \hline
 $6_2$ & $2^-, 0^+$ & $3^-$\\
 \hline
 $6_3$ & $2^-, 0^-, 0^+, 2^+$ & \\
 \hline
 $7_1$ & $4^-$ & \\
 \hline
 $7_2$ & $2^-, 0^+$ & $3^-$ \\
 \hline
 $7_3$ &  & $3^-$ \\
 \hline
 $7_4$ & & $2^-, 0^+, 1^+$\\
 \hline
 $7_5$ & & $1^+$ \\
 \hline
 $7_6$ & $2^-, 0^+$ & $3^-$ \\
 \hline
 $7_7$ & $2^-, 0^+$ & $2^+$ \\
 \hline
 $8_1$ & $0^-, 2^+$ & $2^-, 0^+$ \\
 \hline
 $8_2$ &  & $1^+$ \\
 \hline
 $8_3$ & & $2^-, 1^-, 0^-, 0^+, 1^+, 2^+$ \\
 \hline
 $8_4$ & & $0^-, 2^+, 3^+$\\
 \hline
 $8_5$ &  & $3^-$ \\
 \hline
 $8_6$ &  & $2^-, 0^+, 1^+$\\
 \hline
 $8_7$ & $0^-, 2^+$ & $3^+$ \\
 \hline
 $8_8$ & & $2^-, 1^-, 1^+, 2^+$  \\
 \hline
 $8_9$ &  $2^-, 0^-, 0^+, 2^+$ & $1^-, 1^+$ \\
 \hline
 $8_{10}$ &  & $0^-, 2^+, 3^+$ \\
 \hline
 $8_{11}$ & $2^-, 0^+$ & $3^-$ \\
 \hline
 $8_{12}$ & & $2^-, 0^-, 0^+, 2^+$ \\
 \hline
 $8_{13}$ & $2^-, 0^-, 0^+, 2^+$ & \\
 \hline
 $8_{14}$ & $2^-, 0^+$ & $1^+$ \\
 \hline
 $8_{15}$ & & $1^+$\\
 \hline
 $8_{16}$ & & $0^-, 2^+, 3^+$ \\
 \hline
 $8_{17}$ & $2^-, 0^-, 0^+, 2^+$ & \\
 \hline
 $8_{18}$ & & $2^-, 2^+$ \\
 \hline
 $8_{19}$ & $4^-, 3^-$ &  \\
 \hline
 $8_{20}$ & $0^-, 2^+$ & $2^-, 1^-, 0^+, 1^+$ \\
 \hline
 $8_{21}$ & $2^-, 0^+$ & $1^+$ \\
 \hline
\end{tabular}
\end{center}
\caption{Known and unknown values for knot with up to 8 crossings.  Here, ``known values'' are those which are confirmed to be in the set of unknotting indices for the given knot.  The ``unknown values'' are those which cannot yet be ruled out, but for which realizability is unknown.}
\label{fig:table}
\end{figure}

 Most of the known values are a result of the observation that if $K$ has unknotting number 1, then either $\{0^-, 2^+\} \subset U(K)$ or $\{0^+, 2^-\} \subset U(K)$, depending on the sign of the crossing change needed to unknot $K$.
For the two knots $3_1$ and $8_{19}$ (the torus knots $T(2,3)$ and $T(3,4)$, respectively), knowledge of their diagrams contributes to the known values.  That the knots $5_1$ and $7_1$ can be unknotted with negative twists of linking numbers 3 and 4, respectively, is shown in \cite[Figure 6]{MR2569563} and \cite[Figure 5]{MR2569563}, respectively.  Finally, in \cite{MR2569563}, Ait Nouh shows that the knot $7_1$ is not slice in $\CP^2$.  This rules out all remaining values for $7_1$.

%%%%%%%%%%%%%%%%%%%%%%%%%%%%%%%%%%%%%%%%END

%%%%%%%BIBLIOGRAPHY%%%%%%%%%%%%%%

\bibliography{BibTex-UT}
\bibliographystyle{plain}	

\end{document}